\documentclass[12pt]{article}

\usepackage{amsmath}
\usepackage{amsfonts}
\usepackage{latexsym}
\usepackage{amssymb}
\usepackage{enumerate}
\usepackage{amsthm}
\usepackage[normalem]{ulem}
\usepackage{csquotes}
\usepackage{lineno,xcolor}
\usepackage{scalefnt}
\usepackage{tikz}
\usepackage{float}
\usepackage{fancyhdr}
\usepackage{mathabx}
\usetikzlibrary{arrows}
\usetikzlibrary{positioning}
\usepackage{caption}

\definecolor{amaranth}{rgb}{0.9, 0.17, 0.31}
\definecolor{bluegray}{rgb}{0.4, 0.6, 0.8}

\makeatletter

\newtheorem*{maintheorem*}{Main Theorem}
\newtheorem{theorem}{Theorem}[section]
\newtheorem{proposition}[theorem]{Proposition}
\newtheorem{corollary}[theorem]{Corollary}
\newtheorem{lemma}[theorem]{Lemma}

\newtheorem*{theorem*}{Theorem}

\newtheorem{remark}[theorem]{Remark}
\newtheorem*{remark*}{Remark}

\newtheorem*{example*}{Example}
\newtheorem*{conjecture*}{Conjecture}
\newtheorem*{problem*}{Problem}
\def\1{\mathbf 1}

\def\0{\mathbf 0}
\def\cA{\mathcal A}

\def\cD{\mathcal D}

\def\cM{\mathcal M}

\def\cX{\mathcal X}

\def\cT{\mathcal T}

\def\cR{\mathcal R}
\def\cS{\mathcal S}

\def\bC{{\mathbb C}}
\def\bF{{\mathbb F}}

\def\bN{{\mathbb N}}

\def\bQ{{\mathbb Q}}
\def\bR{{\mathbb R}}
\def\bZ{{\mathbb Z}}

\def\Fp{\mathbb F_{p}}

\def\deg{{\rm deg}}
\def\det{{\rm det}}

\def\dim{{\rm dim }}

\def\End{{\rm End}}

\def\GCD{{\rm GCD}}

\def\Mat{{\rm Mat}}
\def\mod{{\rm mod}}

\def\Span{\text{\rm Span}}
\def\sgn{{\rm sgn}}

\def\Trace{{\rm Trace}}

\newcommand{\floor}[1]{\left\lfloor #1 \right\rfloor}
\def\BOUND{B_r}
\def\<{\langle}
\def\>{\rangle}
\newcommand\comment[1]{}

\newcommand*{\shifttext}[2]{
  \settowidth{\@tempdima}{#2}
  \makebox[\@tempdima]{\hspace*{#1}#2}
}
\newlength{\tagwidth}
\newcommand\redsout{\bgroup\markoverwith{\textcolor{amaranth}{\rule[0.5ex]{2pt}{0.4pt}}}\ULon}

\newcommand\redout{\bgroup\markoverwith
{\textcolor{red}{\rule[.4ex]{2pt}{0.8pt}}}\ULon}

\makeatletter
\def\@fnsymbol#1{\ensuremath{\ifcase#1\or \ast\or\dagger\or \ddagger\or
		\mathsection\or \mathparagraph\or \|\or **\or \dagger\dagger
		\or \ddagger\ddagger \else\@ctrerr\fi}}
\makeatother

\title{Unbalanced distance-biregular graphs \\ with girth deficiency two}
\author{Sebastiano Argenti 
	\footnote{Sebastiano Argenti: sargenti@mun.ca\hfill\newline Department of Mathematics and Statistics, Memorial University of Newfoundland, St. John’s, NL, A1C5S7, Canada
	}
\and
Alessandro Siciliano\footnote{Alessandro Siciliano: alessandro.siciliano@unibas.it\hfill\newline Dipartimento di Scienze di Base e Applicate -	Universit\`{a} degli Studi della Basilicata - Viale dell'Ateneo Lucano 10 - 85100 Potenza (Italy).}
}
\date{}

\begin{document}

\maketitle

\thispagestyle{fancy}

\fancyhf{}
\renewcommand{\headrulewidth}{0pt}
\lhead{}

\begin{abstract}
Let $\Gamma=(V,E)$ be a distance-biregular graph with even diameter $d$ and girth $2d-2$. On the edge-set $E$ we define relations that form an association scheme; this is done by considering the linear span of the corresponding adjacency matrices, and, by means of intersection diagrams, we prove that this is an algebra.  Furthermore, we describe all the irreducible representations of this algebra, and, by considering the multiplicities of the 2-dimensional ones, we show that there are no such distance-biregular graphs with $28\le d\le 46$ and $d\ge 52$.
\end{abstract}

{\it Keywords: Distance-biregular graph, Association scheme, Irreducible representation}             

{\it Math. Subj. Class.: 05C50,05E30,05E10} 

\section{Introduction} \label{sec_1}

Throughout this paper, $\Gamma=(V,E)$ is a finite undirected graph with vertex set $V$ and edge set $E$, the latter consisting of unordered pairs of distinct vertices. 




For any vertex $u$ of a connected graph $\Gamma$,  let $\varepsilon(u)$ be the  eccentricity of $u$, i.e., the maximum distance from $u$ to any other vertex. For  $i=0,1,2,\ldots,\varepsilon(u)$, let $\Gamma_i(u)$  be the set of vertices at distance $i$ from $u$.  The vertex $u$ is said to be {\em distance-regularized} if, for each vertex $v$, the numbers
\[
a_i(u)=|\Gamma_i(u)\cap\Gamma_1(v)|, \ \ \ b_i(u)=|\Gamma_{i+1}(u)\cap\Gamma_1(v)|, \ \ \ c_i(u)=|\Gamma_{i-1}(u)\cap\Gamma_1(v)|,
\]
for $i=0,\ldots,\varepsilon(u)$, depend only on the distance $d(u,v)=i$ and are independent of the choice of $v$ in $\Gamma_i(u)$.  The numbers $a_i(u)$, $b_i(u)$ and $c_i(u)$ are called the $i${\em-intersection numbers at} $u$, and  the array
 \[
 \begin{pmatrix}
 * & c_1(u) & \cdots & c_{\varepsilon-1}(u) & c_\varepsilon(u) \\
 0 & a_1(u) & \cdots & a_{\varepsilon-1}(u) & a_\varepsilon(u) \\
 b_0(u) & b_1(u) & \cdots & b_{\varepsilon-1}(u) & * \\
 \end{pmatrix}
 \]
 is the {\em intersection array at} $u$. 

A {\em distance-regularized graph} is a connected graph in which every vertex is distance-regularized. There are two very important families of distance-regularized graphs: distance-regular graphs, in which each vertex has the same intersection array, and  distance-biregular graphs, which are bipartite graphs with the property that vertices in the same partition class have the same intersection array.  

The concept of a distance-regularized graph was introduced by Godsil and Shawe-Taylor \cite{gst}  to create a common framework that describes both distance-regular graphs and incidence graphs of generalized polygons, the latter determining, in a natural way, two distance-regular graphs. In the same paper, the authors proved the following result (see \cite{fiol1} for an alternative proof).
\begin{theorem*}\cite{gst}
Let $\Gamma$ be a distance-regularized graph. Then $\Gamma$ is either  distance-regular or distance-biregular.
\end{theorem*}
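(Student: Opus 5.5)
The plan is to show two things. First, every pair of adjacent vertices $u,v$ in $\Gamma$ have the same intersection array, or at least the same array as any vertex at even distance. Second, if the graph is not bipartite then all vertices share one array, so $\Gamma$ is distance-regular; if it is bipartite, the arrays are constant on each colour class, so $\Gamma$ is distance-biregular. Since $\Gamma$ is connected, it suffices to compare arrays along edges.

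The main tool is a double count. Fix an edge $uv$ and count, for each $i$, the pairs relating $\Gamma_i(u)$ and $\Gamma_j(v)$. The count $|\Gamma_i(u)\cap\Gamma_j(v)|$ with $j\in\{i-1,i,i+1\}$ can be computed from the array at $u$, since $v$ is at distance $1$ from $u$. It can also be computed from the array at $v$, since $u\in\Gamma_1(v)$. Concretely, write $k_i(u)=|\Gamma_i(u)|=\prod_{h<i} b_h(u)/c_{h+1}(u)$. Then the standard identity
\[
|\Gamma_i(u)\cap\Gamma_{i+1}(v)|\cdot c_{i+1}(v)\ \text{relations}
\]
and its analogues give equations of the form $p^1_{i,i-1}$-type counts at $u$ equal to the corresponding counts at $v$.

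The cleaner route is to use the distance matrices. Distance-regularity of $u$ means that $A A_i e_u = (b_{i-1}(u)A_{i-1}+a_i(u)A_i+c_{i+1}(u)A_{i+1})e_u$, where $A_i$ is the distance-$i$ matrix. Hence $A_ie_u=p_i^{(u)}(A)e_u$ for a polynomial $p_i^{(u)}$ of degree $i$ determined by the array at $u$. Symmetry of $A_i$ gives $e_v^\top p_i^{(u)}(A)e_u=e_u^\top p_i^{(v)}(A)e_v$ for all $u,v$. I would exploit this via walk counts: $(A^m)_{uu}$ is determined by the array at $u$, and conversely the sequence $(A^m)_{uu}$ determines the array, since the array gives the spectral measure of $A$ at $e_u$ through a three-term recurrence (a Jacobi matrix) and vice versa. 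So equal arrays is equivalent to equal local spectral measures.

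Next, for an edge $uv$, $(A^{m+1})_{uv}$ can be expanded as $\sum_w (A^m)_{uw}A_{wv}$. Using the polynomial identity from the $u$ side gives
\[
\sum_i (A^m)_{u,\cdot}\text{ restricted to }\Gamma_i(u)
\]
in terms of the spectral data of $u$, and likewise from the $v$ side. The identity $e_v^\top A^m e_u=e_u^\top A^m e_v$ then relates the measures. The key claim to derive is that $\mu_u$ and $\mu_v$ coincide up to the reflection $\theta\mapsto-\theta$, with an obstruction exactly when odd closed walks are absent. In the non-bipartite case some $a_i(u)\neq0$, which forces $\mu_u=\mu_v$; hence all arrays are equal and $\Gamma$ is distance-regular. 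In the bipartite case, this yields equality for vertices at distance $2$, hence within each colour class, giving distance-biregularity.

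The main obstacle is the second part of this argument: extracting equality of arrays (not just some weaker relation) from the edge double count. I would first try a direct induction on $i$, proving $c_i(u)=c_i(v)$ and $a_i(u)=a_i(v)$ for adjacent $u,v$ whenever the graph has an odd cycle. The approach is to count edges between $\Gamma_i(u)\cap\Gamma_i(v)$ and its neighbours in the two ways, and to use the first index $i$ with $a_i\ne0$ to break the bipartite symmetry. As a fallback, I would cite the alternative proof in \cite{fiol1}.
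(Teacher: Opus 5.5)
The paper gives no proof of this theorem. It quotes it from \cite{gst} and mentions \cite{fiol1} as an alternative, so your argument has to stand on its own, and as written it has a genuine gap. The decisive step is explicitly left as ``the main obstacle'', and the two claims meant to carry it are false.

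First, the closed-walk counts $(A^m)_{uu}$ do not determine the intersection array at $u$. They determine only the Jacobi matrix, i.e.\ $\varepsilon(u)$, the $a_i(u)$ and the products $b_i(u)c_{i+1}(u)$. For example, a vertex of $C_4$ ($b_0=2$, $b_1=1$, $c_2=2$) and the centre of the tree in which $u$ has two neighbours, each carrying two leaves ($b_0=2$, $b_1=2$, $c_2=1$), have the same Jacobi matrix. Hence they have the same walk counts but different arrays.

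Second, adjacent vertices need not have local measures that agree up to $\theta\mapsto-\theta$. In the subdivided Petersen graph, a distance-biregular example mentioned in the paper, adjacent vertices have $(A^2)_{uu}=2\neq 3=(A^2)_{vv}$. Since every local measure of a bipartite graph is already symmetric, the reflection changes nothing. Finally, the assertion that some $a_i(u)\neq 0$ forces $\mu_u=\mu_v$ comes with no argument, and falling back on \cite{fiol1} is not a proof.

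Your framework can be repaired. The symmetry $e_v^\top A^m e_u=e_u^\top A^m e_v$ you single out is the right tool; you drew the wrong consequence from it. The vectors $A_ie_u$ span the cyclic $A$-module generated by $e_u$, so the number of $m$-walks from $u$ to $x$ depends only on $d(u,x)$. Therefore $(A^m)_{uv}=(A^{m+1})_{uu}/\deg u$ for every neighbour $v$ of $u$, and the symmetry gives
\[
\frac{(A^{n})_{uu}}{\deg u}=\frac{(A^{n})_{vv}}{\deg v}\qquad\text{for all } n\ge 1 \text{ and } u\sim v .
\]
This is a proportionality with factor $\deg v/\deg u$, not a reflection.

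The missing second ingredient is degree information. Every vertex of $\Gamma_i(u)$ has degree $a_i(u)+b_i(u)+c_i(u)$, so all neighbours of a vertex have equal degree. Hence degrees are constant on the classes of the relation ``joined by a walk of even length''. So $\Gamma$ is regular if it has an odd cycle and biregular if it is bipartite.

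If $u,w$ have a common neighbour $v$, then $\deg u=\deg w$, and the display gives $\mu_u=\mu_w$. Thus the local measure is constant on each even-walk class. The common degree of the layers $\Gamma_i(u)$ is now known: constant in the non-bipartite case, alternating with $a_i=0$ in the bipartite case. The recursion
\[
b_i=\deg(\Gamma_i(u))-a_i-c_i,\qquad c_{i+1}=\frac{b_ic_{i+1}}{b_i}
\]
then recovers the whole array from the Jacobi data. This gives distance-regularity in the non-bipartite case and distance-biregularity in the bipartite case.
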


Unlike the theory of distance-regular graphs, which remains one of the most active fields of research in algebraic combinatorics, the theory of distance-biregular graphs appears to have made limited progress since its emergence about forty years ago.  This is despite the distinctive character of these graphs due to their rich connections with many combinatorial structures. 

As meaningfully presented in the monograph \cite{bcn}, distance-regular graphs have strong connections to many parts of graph theory, coding theory, design theory and geometry. The first to recognize and fully use these graphs as the basic structures underlying most finite objects of sufficient regularity was Delsarte \cite{dels}. Besides their connections with other areas, distance-regular graphs are also important mathematical objects in their own right.

Distance-biregular graphs are closely related to distance-regular graphs. In fact, the square of a connected distance-biregular graph induces a distance-regular graph on each of its bipartition classes. Thus, roughly speaking, distance-biregular graphs can be viewed as ``{\em a pair of distance-regular graphs that fit well together}''.

There are numerous constructions of distance-biregular graphs. 
A first family is obtained as follows. Consider the set 
$X=\{1,\ldots,n\}$ and let $k<n$. Let $A$ and $B$ be the sets of 
$k$-subsets and $(k+1)$-subsets of $X$, respectively. The set 
$V=A\cup B$, with adjacency defined by inclusion, gives a 
distance-biregular graph whose diameter is $2k+1$ if $n=2k+1$, and 
$2k+2$ if $n\geq 2k+2$. Since the halved graphs are the Johnson graphs 
$J(n,k)$ and $J(n,k+1)$ \cite{bcn}, these graphs are called 
{\em BiJohnson graphs} and are denoted by $BJ(n,k)$. Their $q$-analogs 
are the {\em BiGrassmann graphs}, denoted by $BJ_q(n,k)$ 
\cite{del,gst,lato2}. The graphs in these families have arbitrarily 
large diameter and girth six.

Another important source of distance-biregular graphs comes from 
incidence geometry. A distance-biregular graph can be viewed as a 
point-block incidence structure \cite{del}, where the edges of the graph 
correspond to the flags of the structure\footnote{A {\em flag} of an 
incidence structure is an incident point-block pair.}. In particular, 
distance-biregular graphs with girth twice the diameter are well 
understood. Examples are provided by the incidence graphs of generalized 
polygons, which are point-line incidence structures introduced by Tits 
\cite{tit} in the study of groups of Lie type. Conversely, any bipartite 
graph with girth twice the diameter and minimum degree at least three is 
the incidence graph of a generalized polygon \cite[Lemma 1.3.6]{vm}. 
Along these lines, Delorme \cite{del} and Godsil and Shawe-Taylor 
\cite{gst} gave many examples of distance-biregular graphs arising from 
combinatorial designs and partial geometries.

There are also spectral characterizations of these graphs. In 
\cite[Theorem 5.2]{lato3}, Lato showed that a semiregular bipartite graph 
with diameter $d$, $d+1$ distinct eigenvalues, and girth $2d-2$ is 
distance-biregular. This result is closely related to the problem raised 
in \cite[Problem 5.5.2]{lato2} of determining the possible values of the 
diameter $d$ of a distance-biregular graph with girth $2d-2$. The case 
$d=4$, and hence girth $6$, is particularly significant: such a graph is 
the incidence graph either of a partial geometry of type 
$PG(s,t,\alpha)$, with $\alpha\geq2$, or of a Steiner system of type 
$S(2,k,v)$ \cite{del,st1}, according to whether or not all vertices have 
the same eccentricity. For $d=6$ and girth $10$, Delorme \cite{del} 
constructed an example given by the subdivision graph of the Petersen 
graph.

Beyond these geometric and spectral connections, distance-biregular 
graphs are also related to Hadamard matrices and $2$-fold covers of 
complete graphs \cite[Theorem 3.1]{st2}. For further results on 
distance-biregular graphs, see 
\cite{acej,del,fk,fp,fiol1,fiol2,hp,lato3,lato1,nom}; for a more 
extensive treatment, we refer the reader to \cite{lato2}.  
In her PhD thesis \cite{lato2} (see also \cite{lato4}), Lato showed that the vertex set of any distance-biregular graph can be endowed with the structure of a coherent configuration with two fibers \cite{cp}. When the distance-biregular graph is a generalized polygon, Higman \cite{hig}   showed that it is possible to define an association scheme on the edges of the graph. By studying the representations of the associated  adjacency algebra, Higman provided another proof of the result that the incidence graph of a generalized polygon with minimum degree three has diameter $d=2,3,4,6$ or $8$. The original proof of this result was given by Feit and Higman in \cite{fh} by using a different approach.  

Taking inspiration from the work of Higman \cite{hig}, in this paper we consider distance-biregular graphs with girth $2d-2$, where $d$ is the diameter of the graph. We show that, when the eccentricities of the vertices take two distinct values, it is possible to define certain relations on the edges of the graph that give an association scheme. Then, by studying the representations of the adjacency algebra associated with this scheme, and the factorization of certain polynomials over the integers, we prove the following result:

\begin{maintheorem*}
Let $\Gamma=(V,E)$ be a distance-biregular graph with diameter $d$, girth $2d-2$, and with two distinct eccentricities. Then, $d$ is even with $4\le d \le 26$ or $d\in\{48,50\}$.
\end{maintheorem*}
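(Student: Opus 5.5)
The plan follows Higman's treatment of generalized polygons, adapted to girth $2d-2$.

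\emph{Parity and structure.} First I would show that $d$ must be even. In a bipartite graph with two distinct eccentricities, one colour class has eccentricity $d$ and the other $d-1$. The girth $2d-2$ is even, and the intersection numbers satisfy $c_i=1$ for $i<d-1$ at both classes. Comparing the intersection arrays at the two classes, via the standard relations between the arrays of a distance-biregular graph (as in Delorme and Lato), should force $d$ to be even; the odd case would give equal eccentricities. This step also fixes the parameters. Write $s+1$ and $t+1$ for the two valencies. The only free intersection numbers are then $c_{d-1}$ and $c_d$ on the class of eccentricity $d$, together with the one nontrivial $c$ on the other class. Counting vertices in two ways ties these numbers to $s$ and $t$.

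\emph{The scheme on edges.} Next, on the edge set $E$ I would define relations $R_0,\dots,R_m$. Two edges are related according to the distance between them in the line graph and the type of the endpoints realising that distance, with one extra relation splitting pairs at maximal distance according to whether the edges lie on a common $(2d-2)$-cycle. Using intersection diagrams rooted at an edge, I would show that $A_iA_j$ is a linear combination of the $A_k$. Since the graph is locally a tree up to distance $d-2$, most products reduce to path-counting in a $(s,t)$-biregular tree. Only products reaching the last one or two layers need the numbers $c_{d-1}$ and $c_d$. The result is a commutative (or at least semisimple) adjacency algebra $\mathcal A$.

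\emph{Representations and multiplicities.} I would then classify the irreducible representations of $\mathcal A$, as Higman does. The algebra is generated by two idempotent-like matrices, namely the projections onto the ``same point'' and ``same block'' relations, so $A$ and $B$ with $A^2=(s+1)A$ and $B^2=(t+1)B$. Its irreducible representations are therefore $1$- or $2$-dimensional. The $2$-dimensional ones are indexed by the eigenvalues $\theta$ of a Chebyshev-type polynomial relation of degree roughly $d/2$ satisfied by $AB$. I would compute each multiplicity $m_\theta$ by the standard formula
\[
m_\theta=\frac{|E|\,\chi_\theta(1)}{\sum_k \chi_\theta(A_k)\overline{\chi_\theta(A_k)}/n_k}.
\]
This turns integrality and nonnegativity of $m_\theta$ into a condition on the eigenvalues $\theta$.

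\emph{The arithmetic step, which I expect to be the main obstacle.} The eigenvalues are of the form $s+t+2\sqrt{st}\cos(2\pi j/(d-2))$, up to a shift. Galois conjugate eigenvalues must have equal multiplicities. When the irreducible factors over $\mathbb Z$ of the relevant minimal polynomial, essentially cyclotomic-type factors $\Psi_n$ with $n\mid d-2$ or $n\mid 2(d-2)$, do not coincide with the grouping of equal multiplicities, $m_\theta$ is irrational or differs across conjugates, which is a contradiction. Carrying this out means factoring the polynomial for general $d$ and handling degenerate cases such as $s=t$ or $st$ a square. This should rule out $d\ge 52$ and $28\le d\le 46$. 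The leftover small and exceptional values $d\le 26$ and $d\in\{48,50\}$ are those where the factorization happens to allow consistent multiplicities, for instance because divisors of $d-2$ such as $46$ and $48$ behave differently. Checking that exactly these survive will likely require explicit case computation, and that is where I expect most of the difficulty to lie.
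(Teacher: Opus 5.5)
Your set-up matches the paper: Delorme's lemma forcing $d$ even, a scheme on $E$ read off from the edge-rooted intersection diagram, an algebra generated by $A=M+I$ and $B=N+I$ with only $1$- and $2$-dimensional irreducibles, and the orthogonality formula for multiplicities. The gap is in the arithmetic step, which is where the whole bound lives. You assume the $2$-dimensional representations carry cyclotomic data, $\theta=s+t+2\sqrt{st}\cos(2\pi j/(d-2))$, as for a generalized polygon. That is false here. Besides $A^2=(s+1)A$ and $B^2=(t+1)B$, the algebra has the asymmetric defining relation $MD=kD=DM$, where $D=(NM)^rN-(MN)^rM$ and $d=2r+2$. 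This forces $\tau=\Trace\rho(MN)$ to be a zero of $F_r=\widetilde U_r+k\,\widetilde U_{r-1}$, where $F_0=1$, $F_1=X+k$ and $F_{i+2}=XF_{i+1}-klF_i$. These zeros only interlace the scaled Chebyshev zeros $2\sqrt{kl}\cos(j\pi/(r+1))$, and they depend on $k/l$. Already for $d=4$ the unique value is $\tau=-k$. In your normalization the nonzero eigenvalue of $\rho(AB)$ is then $\tau+k+l=l$, which would need $\cos\phi=-\tfrac12\sqrt{k/l}$, not a fixed root-of-unity angle. So there are no factors $\Psi_n$ to match against multiplicities, and the Feit--Higman field-degree argument has nothing to act on. For the same reason, your explanation of the survivors $48,50$ via divisors of $d-2$ has no basis.

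What is needed instead is the following chain.
\begin{itemize}
\item Since $m_\tau=|E|(\tau^2-4kl)/\bigl((\tau-kl-1)((2r+1)(\tau+k+l)+l-k)\bigr)$ is an integer and a ratio of integral quadratics in $\tau$, clearing denominators shows every zero of $F_r$ is at most quadratic over $\mathbb{Q}$. The real difficulty is then showing $F_r$ cannot split this way for large $r$.
\item The roots $\lambda$ of $X^2-\tau X+kl$ satisfy $\lambda^{2r+1}(\lambda+k)=k^{r+1}l^r(\lambda+l)$. Hence the numbers $\lambda^2/(kl)$ are roots of $q(X)=lX^{2r+2}-kX^{2r+1}-2(l-k)X^{r+1}-kX+l$. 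The irreducible factors of $q$ other than $(X-1)^2$ are palindromic of degree $2$ or $4$, so modulo any prime $p$ their roots satisfy $(X^{p^2-1}-1)(X^{p^2+1}-1)$.
\item Write $k=mK$ and $l=mL$ with $\gcd(K,L)=1$. If $p$ divides $L$, $K$ or $L-K$, the reduction degenerates (up to a unit) to $X(X^r-1)^2$, $(X^{r+1}-1)^2$ or $(X-1)(X^{2r+1}-1)$ respectively. So the prime-to-$p$ part of $r$, $r+1$ or $2r+1$ must divide $p^2\pm1$.
\item If $p\nmid KL(L-K)$, controlling repeated roots gives $r\le p^2+2$. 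A finite computation for $p\le 13$ then shows that $2,3,5,7,11,13$ all divide $KL(L-K)$ once $r\ge 7$.
\item Each of $2,3,5,7$ divides exactly one of the pairwise coprime $K$, $L$, $L-K$, so two of them divide the same one, and pigeonholing gives $r\le 50$.
\item Checking $p\in\{2,3,7,13\}$ leaves $r\le 12$ or $r\in\{23,24\}$. The last two survive because $24\mid p^2-1$ for every prime $p\ge 5$.
\end{itemize}
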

 
 The structure of the paper is as follows. Section \ref{sec_2} contains some notation and introductory material on  distance-biregular graphs, association schemes and the representations of the associated adjacency algebra.  We  introduce the notion of unbalanced distance-biregular graph with girth deficiency two and  we investigate some properties of the parameters of such graphs. 
In Section \ref{sec_3}, for any such  graph,  we define relations on its edge-set and show that they form an association scheme. This is done by considering the linear span of the corresponding adjacency matrices and, by means of intersection diagrams, we prove this is actually an algebra of matrices.
The irreducible representations of this algebra are completely described in Section \ref{sec_4}. It turns out that they can be 1- or 2-dimensional, and those of dimension two are in one-to-one correspondence with the zeros of a polynomial over the integers, whose degree is related to the diameter of the graph. 
The multiplicities of irreducible representations are calculated in Section \ref{sec_5}. {We have reduced this calculation to a sum of rational functions, which is then performed on a computer.} The expression of the multiplicity of the two-dimensional irreducible representation provides information on the factorization of the above polynomial over the integers. It turns out that the degree of its irreducible factors over the integers is at most two. In Section \ref{sec_6}, by taking this information into account, we are able to provide the spectrum for the values of the diameter of an unbalanced distance-biregular graph with girth deficiency two. Once again, we reduced this problem to a finite calculation, which was then performed on a computer. Finally, Section \ref{sec_7} contains a more detailed analysis of the restrictions on the parameters of the graph. More precisely, for each feasible value of the diameter, we studied the possible values of the valencies of the vertices of such graphs.  The results obtained lead us to conjecture that there are no
unbalanced distance-biregular graphs with diameter greater than six
and girth deficiency two.

Our approach is similar to that of \cite{hig}, in that we endow the
edge set of an unbalanced distance-biregular graph with girth
deficiency two with the structure of an association scheme. However,
our most significant results rely on properties of certain orthogonal
polynomials and on methods from modular arithmetic similar to those
introduced by Fuglister \cite{fug}.

\section{Preliminaries and first results} \label{sec_2}

Let $\Gamma=(V,E)$ be a finite undirected graph with vertex set $V$ and edge set $E$. To make shorter the notation, the edge with vertices $u$ and $v$ will be denoted by $uv$. Two vertices $u$ and $v$ are said to be {\em adjacent} if $uv\in E$.

A {\em non-stammering walk} of length $n$ in $\Gamma$ is a sequence  $u_0,u_1,\ldots,u_{n-1},u_n $ of vertices, where, for $1\le i \le n$, $u_{i-1}$ and $u_i$ are adjacent  and  $u_{j-1}\neq u_{j+1}$, for $j=1,\ldots, n-1$.  We will use simply the word walk when no confusion can arise. A {\em circuit} is a  walk where $u_0=u_n$. A walk containing no repeated vertices is a {\em path}. A circuit
$u_0,u_1,\ldots,u_n$ is a {\em cycle} if
$u_0,u_1,\ldots,u_{n-1}$ are pairwise distinct. If $\Gamma$ has a
cycle, then the {\em girth} $g(\Gamma)$ is the minimum length of a
cycle in $\Gamma$; if $\Gamma$ has no cycle, then its girth is infinite.

A graph $\Gamma$ is {\em connected} if there is a path between every pair of its vertices.   In a connected graph $\Gamma$, the {\em distance} between two vertices $u$ and $v$ is defined to be the minimum length  $d(u,v)$ of a path from $u$ to $v$. For any vertex $u$ of a connected graph $\Gamma$,  the  {\em eccentricity $\varepsilon(u)$ of}  $u$ is the maximum distance from $u$ to any other vertex. The {\em diameter} $d(\Gamma)$ of $\Gamma$ is the maximum eccentricity
among its vertices. If $\Gamma$ contains a cycle, then
$g(\Gamma)\le2d(\Gamma)+1$.

A graph $\Gamma$ is {\em bipartite} if its vertex set $V$ can be partitioned into two non-empty disjoint subsets  $A$ and $B$ such that any edge of $\Gamma$ joins a vertex in $A$ with a vertex in $B$; the sets $A$ and $B$ are called the {\em partition classes} of $\Gamma$.

 For  $i=0,1,2,\ldots,\varepsilon(u)$, let $\Gamma_i(u)$  be the set of vertices at distance $i$ from $u$. The size of $\Gamma_1(u)$ is  the {\em degree of} $u$:  $\deg(u)=|\Gamma_1(u)|$. A vertex $u$ is said to be {\em distance-regularized} if, for each vertex $v$, the numbers
\[
a_i(u)=|\Gamma_i(u)\cap\Gamma_1(v)|,\qquad
b_i(u)=|\Gamma_{i+1}(u)\cap\Gamma_1(v)|,\qquad
c_i(u)=|\Gamma_{i-1}(u)\cap\Gamma_1(v)|,
\]
for $i=0,\ldots,\varepsilon(u)$, depend only on the distance $d(u,v)=i$ and are independent of the choice of $v$ in $\Gamma_i(u)$.  The numbers $a_i(u)$, $b_i(u)$ and $c_i(u)$ are called the $i${\em-intersection numbers at} $u$, and  the array
 \[
 \begin{pmatrix}
 * & c_1(u) & \cdots & c_{\varepsilon-1}(u) & c_\varepsilon(u) \\
 0 & a_1(u) & \cdots & a_{\varepsilon-1}(u) & a_\varepsilon(u) \\
 b_0(u) & b_1(u) & \cdots & b_{\varepsilon-1}(u) & * \\
 \end{pmatrix}
 \]
 is the {\em intersection array at} $u$.  Clearly, $b_0(u)=\deg(u)$. 
 A {\em distance-regularized graph} is a connected graph in which every vertex is distance-regularized. According to the result of Godsil and Shawe-Taylor \cite[Theorem 2.2, Corollary 2.3.1]{gst}, the only distance-regularized graphs are distance-regular graphs or distance-biregular graphs.

Let $\Omega$ be a finite set of size $n$, and $\cR$ a partition of $\Omega\times \Omega$. The elements of $\cR$ are called {\em relations on} $\Omega$, and for a relation $s$, the relation $s'=\{(y,x):(x,y)\in s\}$ is the {\em transpose} of $s$. 

An {\em association scheme} on $\Omega$  is a pair $\cX=(\Omega,\cR)$ such that 
\begin{enumerate}
\item[(AS1)] $1_\Omega=\{(x,x):x \in \Omega\}\in \cR$;
\item[(AS2)] $s' \in \cR$, for all $s\in \cR$;
\item[(AS3)] for any $r,s,t\in \cR$, the number $p_{s\,t}^r$ of $z\in \Omega$ such that $(x,z)\in s$ and $(z,y)\in t$ does not depend on the choice of $(x,y)\in r$. 
\end{enumerate}
 
The cardinality of $\Omega$ is the {\em order} of $\cX$, and the integers $p_{s\,t}^r$ are the {\em intersection numbers} of $\cX$; for every relation $s$, $\eta_s=p^{1_\Omega}_{s,s'}$ is the {\em valency} of $s$. Note that $|\Omega|=\sum_{s\in \cR}\eta_s$.

In the following, as usual, $\bC$ is the field of complex numbers, 
$\bC^\Omega$ is the vector space of all $|\Omega|$-tuples over $\bC$, 
whose entries are indexed by the elements of $\Omega$, and 
$\Mat_\Omega(\bC)$ is the matrix algebra over $\bC$ whose rows and 
columns are indexed by the elements of $\Omega$. We set $I$ and $J$ 
to be the identity matrix and the all-ones matrix in 
$\Mat_\Omega(\bC)$, respectively. For any $A\in\Mat_\Omega(\bC)$, 
we denote its transpose by $A^\top$ and its Hermitian transpose by 
$A^*$. With $\<\ ,\ \>$ we denote the canonical Hermitian inner 
product on $\bC^\Omega$.

For any subset $\cM$ of $\Mat_\Omega(\bC)$,  $\Span_\bC(\cM)$ is the linear span of $\cM$ in $\Mat_\Omega(\bC)$.
The {\em Hadamard} (or {\em Schur}) {\em product} $A\circ B$ of the matrices $A,B\in \Mat_\Omega(\bC)$ is defined by the formula
\[
(A\circ B)_{xy}=A_{xy}B_{xy}.
\]

The {\em adjacency matrix} of a relation $s\subseteq \Omega\times\Omega$	 is defined to be the $\{0,1\}$-matrix $A_s\in \Mat_\Omega(\bC)$ such that $(A_s)_{xy}=1$ if and only if $(x,y)\in s$.

Let $\cX=(\Omega,\cR)$ be an association scheme. The subset $\cM(\cX)=\{A_s:s\in \cR\}$ of $\Mat_\Omega(\bC)$   consists of pairwise orthogonal matrices with respect to the Hadamard
product. Then, $\cM(\cX)$ is a basis of  $\cA=\Span_\bC(\cM(\cX))$. From property (AS3) it follows that $\cA$ is a subalgebra of $\Mat_\Omega(\bC)$, whose dimension is the size of the set $\cR$; the subalgebra $\cA$ is  the {\em adjacency  algebra} (or the {\em Bose-Mesner  algebra}) of the association scheme  $\cX$.
\begin{theorem}\cite[Corollary 2.3.8]{cp}\label{th_2}
Let $\cM=\{A_0=I,A_1,\ldots,A_d\}$ be a finite family of non-zero $n\times n$ $\{0,1\}$-matrices. Then, $\Span_{\bC}(\cM)$ is the adjacency algebra of an association scheme (on a set $\Omega$ of size $n$) if and only if the following conditions hold:
\begin{itemize}
\item[i)]  $\sum_{i=0}^{d}{A_i}=J$;
\item[ii)] for all $A_i\in \cM$, $A_i^\top =A_{i'}$, for some $i'\in\{0,\ldots, d\}$;
\item[iii)] $A_iA_j=\sum_{k=0}^{d}{p^k_{ij}A_k}$, for all $i,j\in\{0,\ldots, d\}$.
\end{itemize} 
\end{theorem}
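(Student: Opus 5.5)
The statement is the classical characterization of Bose--Mesner algebras among spans of $\{0,1\}$-matrices, so I will prove both directions directly from the axioms (AS1)--(AS3). The idea is to identify each matrix $A_i$ with the relation $s_i=\{(x,y):(A_i)_{xy}=1\}$ on an index set $\Omega$ of size $n$. Under this dictionary, condition i) says that the $s_i$ partition $\Omega\times\Omega$. Condition ii) is (AS2), since $A_{s'}=A_s^\top$. The entry $(A_iA_j)_{xy}$ counts the $z$ with $(x,z)\in s_i$ and $(z,y)\in s_j$, so condition iii) is (AS3).

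For the forward direction, suppose $\cX=(\Omega,\cR)$ is an association scheme and $\cM=\{A_s:s\in\cR\}$, with $A_0=I$ by (AS1). The relations partition $\Omega\times\Omega$, which gives i). Condition ii) is (AS2). For iii), fix $(x,y)\in r$. Then $(A_sA_t)_{xy}$ counts the $z$ with $(x,z)\in s$ and $(z,y)\in t$, so it equals $p^r_{st}$, independently of the choice of $(x,y)\in r$. Hence $A_sA_t=\sum_r p^r_{st}A_r$.

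For the converse, define the $s_i$ as above. Since each $A_i$ is a nonzero $\{0,1\}$-matrix and $\sum_i A_i=J$, the $s_i$ are nonempty and pairwise disjoint with union $\Omega\times\Omega$, so $\cR=\{s_0,\dots,s_d\}$ is a partition. Because $A_0=I$, the relation $s_0$ is the diagonal $1_\Omega$, which gives (AS1). Condition ii) gives $s_i'=s_{i'}\in\cR$, which is (AS2).

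For (AS3), take $(x,y)\in s_k$. The support of $A_k$ is disjoint from the supports of the other $A_l$, so comparing $(x,y)$-entries in iii) gives $(A_iA_j)_{xy}=p^k_{ij}$. This is exactly the number of $z$ required by (AS3), and it depends only on $k$. Finally, by i) the matrices in $\cM$ have pairwise disjoint supports, so they are linearly independent. Condition iii) then makes $\Span_\bC(\cM)$ closed under multiplication, and it is the adjacency algebra of this scheme.

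There is no real obstacle; the only subtle point is reading the coefficients $p^k_{ij}$ as entries of $A_iA_j$, which relies on the disjointness of supports from i).
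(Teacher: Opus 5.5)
The paper gives no proof of this statement; it is quoted from \cite{cp}, and only its ``if'' direction is used (in Corollary~\ref{cor_7}). Your proof is the standard dictionary between nonzero $\{0,1\}$-matrices and relations, and it is correct:
\begin{itemize}
\item In the converse, condition i) makes the supports $s_i$ a partition of $\Omega\times\Omega$.
\item For $(x,y)\in s_k$, comparing $(x,y)$-entries in iii) yields (AS3).
\item The identification of $\Span_\bC(\cM)$ with the adjacency algebra then follows directly from the definition.
\end{itemize}

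One point you should state explicitly. In the ``only if'' direction you assume $\cM=\{A_s:s\in\cR\}$. That is, you read ``$\Span_\bC(\cM)$ is the adjacency algebra of an association scheme'' as ``$\cM$ is the set of adjacency matrices of an association scheme''. This reading is necessary, because under the literal reading the implication is false. For $n\ge 2$, the family $\cM=\{I,J\}$ consists of nonzero $\{0,1\}$-matrices with $A_0=I$. It spans $\Span_\bC\{I,J-I\}$, which is the adjacency algebra of the trivial scheme, yet $I+J\neq J$, so i) fails. Under the intended standard-basis reading, your argument is complete in both directions.
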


Note that property iii) can be interpreted as meaning that $\cM$ spans a subalgebra of $\Mat_\Omega(\bC)$.
The reader is referred to \cite{bi,cp,dels} for additional information on association schemes.

Let $\cA$ be the adjacency algebra of an association scheme $\cX=(\Omega,\cR)$. A {\em representation} of $\cA$ is a pair $(U,\rho)$, where $U$ is a vector space  and $\rho$ is a homomorphism $\rho:\cA\rightarrow \End\,(U)$. The representation  $(U,\rho)$ is {\em irreducible} if $U$ is an irreducible module, that is, it does not contain proper non-trivial submodules.

The algebra $\cA$ acts by multiplication (on the left) on $\bC^\Omega$, which is called the {\em standard module} of $\cA$.
Let $U\subset\bC^\Omega$ be an $\cA$-submodule, i.e. $AU\subseteq U$, for all $A\in \cA$. This yields that every $A\in\cA$ acts as an endomorphism of $U$. 
 For any $A\in\cA$, $v\in U$ and $w\in U^\perp$ we have $\<Aw,v\>=\<w,A^*v\>=0$.
This yields that the subspace  $U^\perp$ is also $\cA$-invariant. 
It follows that $\bC^\Omega$ can be written as a direct sum of irreducible mutually orthogonal submodules. Let $U$ be one such  submodule. Since $U$ is irreducible,  the homomorphism $\rho:\cA\rightarrow \End\,(U)$ is surjective by the Density Theorem \cite[Theorem 3.2.2]{eghls}. Once a basis for $U$ is fixed,  $\End\,(U)$ can be  canonically identified with $\Mat_{\dim\, U}(\bC)$. Therefore, $\rho(\cA)$ is actually the full matrix algebra $\Mat_{\dim\, U}(\bC)$.  In addition,  if the chosen basis is orthonormal we have $\rho(A^*)=\rho(A)^*$, for any $A\in\cA$.
  
Two  representations $(U,\rho)$ and $(U',\rho')$ of $\cA$ are {\em isomorphic} if there is a bijective intertwining operator $\varphi: U \rightarrow  U'$,  that  is a linear isomorphism which commutes with the action of $\cA$: $\varphi(\rho(A)v) = \rho'(A)(\varphi(v))$, for any $v \in U$ and $A\in\cA$. 
 
For any $x\in \Omega$, let  $\chi_x\in\bC^\Omega$  be the vector with 1 in the entry corresponding to $x$ and 0 in the other entries. The set $\{\chi_x:x\in \Omega\}$ is the {\em canonical basis} of $\bC^\Omega$. 
For $x\in \Omega$ and $s\in \cR$, we have $A_s\chi_x=\sum_{\substack{y\in \Omega\\(y,x)\in s}}{\chi_y}$. Therefore, $A_sw=\eta_sw$, where $w=\sum_{x\in \Omega}{\chi_x}$ is the all-ones vector and $\eta_s$ is the valency of $s$. 
Thus, $U=\<w\>$ is an $\cA$-submodule, and it is clearly irreducible;  it is  called the {\em principal representation} of $\cA$, whose associated homomorphism is $\rho(A_s)=\eta_s$ for $s\in\cR$.

 {Let $(U_i,\rho_i)$, $i=1,\ldots,t$, } be the nonisomorphic irreducible representations of $\cA$. Since $\cA$ is semisimple \cite{hig3}, the standard module $\bC^\Omega$ decomposes as 
\[
\bC^\Omega=U_1\perp\underbrace{U_2\perp\ldots\perp U_2}_{m_2}\perp\cdots\perp \underbrace{U_t\perp\ldots\perp U_t}_{m_t},
\]
for some positive integers $m_i$, called  the multiplicity of $U_i$. In addition, 
\begin{equation}\label{eq_28}
(\dim\, U_1)^2+\ldots+(\dim\, U_t)^2=\dim\,\cA
\end{equation} 
\cite[Proposition 3.5.8]{eghls}. We refer the reader to \cite{eghls} for more on representation theory.

Let $\Gamma=(V,E)$ be a distance-biregular graph with partition classes $A$ and $B$.
We denote the eccentricity of the vertices in $A$ (resp. in $B$) by $d_A$ (resp. $d_B$). Clearly, $d(\Gamma)=\max\{d_A,d_B\}$, and, without loss of generality, we may assume $d(\Gamma)=d_A$. To shorten notation, we will write $d$ instead of $d(\Gamma)$ and $g$ instead of $g(\Gamma)$.

For $u\in A$ and $v\in\Gamma_{i}(u)$, let
\[
\beta_i^{A}=|\Gamma_{i+1}(u)\cap\Gamma_1(v)|,
\qquad 0\le i<d_A,
\]
\[
\gamma_i^{A}=|\Gamma_{i-1}(u)\cap\Gamma_1(v)|,
\qquad 0<i\le d_A.
\]

The integers $\beta_i^{B}$, for $0\le i< d_B$, and $\gamma_i^{B}$, for $0< i\le d_B$, are defined in the same way for a vertex $u\in B$.  We have $\gamma_1^{A}=\gamma_1^{B}=1$ and 
\[ 
\deg(u)=\left\{
\begin{array}{ll}
\beta_0^A, &  \mathrm {if\ } u\in A\\[.03in]
\beta_0^B, &\mathrm {if\ } u\in B
\end{array}
\right..
\]

Furthermore,  for $\{\pi,\xi\}=\{A,B\}$, we have
\begin{equation}\label{eq_1}
\beta_i^\pi+\gamma_i^\pi=
\left\{\begin{array}{cl}
\beta_0^\pi & \mbox{if $i$ is even}\\[.03in] 
\beta_0^\xi & \mbox{if $i$ is odd}
\end{array}
\right.,
\end{equation}
and 
\begin{equation}\label{eq_2}
\gamma_{d_\pi}^\pi=
\left\{\begin{array}{cl}
\beta_0^\pi & \mbox{if $d_\pi$ is even}\\[.03in] 
\beta_0^\xi & \mbox{if $d_\pi$ is odd}
\end{array}
\right..
\end{equation}

The integers $\beta_i^A$, $\gamma_i^A$, $\beta_i^A$ and $\gamma_i^B$ are the {\em intersection numbers} of  $\Gamma$. They are usually arranged in a two-line array:

The integers $\beta_i^A$, $\gamma_i^A$, $\beta_i^B$ and $\gamma_i^B$
are the {\em intersection numbers} of $\Gamma$. They are usually
arranged in a two-line array:
\[
\begin{array}{|c|}
\beta_0^A;\ \  1,\ \gamma_2^A,\ \cdots, \ \gamma_{d_A}^A\\[.05in]
\beta_0^B;\ \ 1,\ \gamma_2^B,\ \cdots, \ \gamma_{d_B}^B
\end{array}\ .
\]
This is the {\em intersection array} of $\Gamma$.

 The following characterization was  given by Mohar and Shawe-Taylor  \cite[Corollary 3.5]{mst}; see also  \cite[Theorem  3.6.2]{lato2}.

\begin{theorem}\label{th_6}
Let $\Gamma$ be a distance-biregular graph with vertices of degree two. Then, {either} $\Gamma$ is the complete bipartite graph $K_{2,n}$ or {$\Gamma$ is the subdivision graph of  a Moore graph or the subdivision graph of the incidence graph of a regular generalized polygon.}
\end{theorem}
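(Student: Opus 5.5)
Suppose $\Gamma$ has a vertex of degree two, and let $A$ be the class containing it. The plan is to show that the whole graph is determined by this local condition together with distance-biregularity, and then to recognize it.

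\emph{Step 1: every vertex of $A$ has degree two.} Since vertices of the same class share an intersection array and $\beta_0^A=\deg(u)$ for $u\in A$, all of $A$ has degree two. Let $k=\beta_0^B$ be the degree of the vertices of $B$.

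\emph{Step 2: reduce to a graph on $B$.} Replace each vertex of $A$ by an edge joining its two neighbours in $B$. This gives a multigraph $\Delta$ on $B$ whose subdivision is $\Gamma$, and distances in $\Gamma$ between vertices of $B$ are exactly twice the distances in $\Delta$. If $\Delta$ has parallel edges, two vertices of $A$ have the same pair of neighbours, so $\gamma_2^B\ge 2$. In that case the numbers $\gamma_i^A$ and $\beta_i^A$ (which are at most $2$, since $\beta_i^A+\gamma_i^A\in\{2,k\}$ by \eqref{eq_1}) together with regularity at $B$ should force every vertex of $B$ to have the same neighbourhood, giving $\Gamma=K_{2,n}$ (with $|B|=2$). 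If $\Delta$ is simple, then $\Delta$ is $k$-regular with girth $g(\Gamma)/2$.

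\emph{Step 3: distance-regularity and the Moore bound.} Distance-regularity at vertices of $A$ means $\Delta$ is distance-regular around each edge, and the intersection numbers of $\Gamma$ at $B$ give the distance-regularity of $\Delta$. The key observation is that for $u\in A$ at odd distance $2j+1$ from a vertex $v\in B$, the quantity $\gamma^A_{2j+1}\in\{1,2\}$ records whether $v$ is equidistant from both ends of the edge $u$. Hence $\gamma^A_i=1$ for all $i$ below the top range. Comparing the counts at distance $d_A$ and $d_B$ with the Moore bound, the plan is to show that $\Delta$ attains the Moore bound. The bound is attained either with odd girth $2D+1$ and diameter $D$, so that $\Delta$ is a Moore graph, or with even girth $2D$ and bipartite diameter $D$, so that $\Delta$ is a generalized polygon incidence graph. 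The regularity of $\Delta$ forces the regular generalized polygon.

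\emph{Main obstacle.} The hardest step is this last deduction: turning the edge-regularity conditions into tightness of the Moore bound, i.e.\ showing that no vertex of $\Delta$ at distance below the diameter has two geodesics. I would argue by taking the smallest $i$ with $\gamma_i^B>1$. Translating this back to $\Delta$ gives a cycle of length $g(\Delta)$, and the condition $\gamma^A\le 2$ then pins $i$ to the maximal possible index, giving $g(\Delta)\ge 2\,\mathrm{diam}(\Delta)$. Since this is the Mohar--Shawe-Taylor result, one may alternatively simply cite \cite[Corollary 3.5]{mst}, as the paper does.
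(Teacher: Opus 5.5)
The paper gives no proof of this statement. It is quoted from Mohar and Shawe-Taylor (Corollary 3.5 of their paper; see also Lato's thesis), so your closing remark that one may simply cite that result is exactly the paper's route.

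Read as a self-contained argument, however, your sketch has a genuine gap, plus two misstatements that feed into it.
\begin{itemize}
\item The parenthetical in Step 2 is false. By \eqref{eq_1}, $\beta_i^A+\gamma_i^A=\beta_0^B=k$ for odd $i$, so only the even-index numbers $\beta^A_{2j},\gamma^A_{2j}$ are bounded by $2$.
\item In Step 3 the superscript is wrong. For $u\in A$ at distance $2j+1$ from $v\in B$, the number in $\{1,2\}$ counting the ends of the edge $u$ at $\Delta$-distance $j$ from $v$ is $\gamma^B_{2j+1}$. The number $\gamma^A_{2j+1}$ counts the $A$-neighbours of $v$ at distance $2j$ from $u$ and is a priori only bounded by $k$. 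So ``hence $\gamma^A_i=1$ below the top range'' is not justified.
\item Above all, the decisive step is only announced (``should force'', ``the plan is to show''). That step is $g(\Delta)\ge 2\,\mathrm{diam}(\Delta)$, together with $\Gamma=K_{2,n}$ when $\Delta$ has parallel edges. The mechanism you invoke, an upper bound $\gamma^A\le 2$, is not what makes it work.
\end{itemize}

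The missing idea is the equality case. For an edge $e'$ of $\Delta$ at $\Gamma$-distance $2j$ from an edge $e$, the number $\gamma^A_{2j}\in\{1,2\}$ counts the ends of $e'$ at $\Delta$-distance $j-1$ from $e$, and $\beta^A_{2j}=2-\gamma^A_{2j}$. So if the value $2$ occurs even once, distance-biregularity gives $\beta^A_{2j}=0$ at every vertex of $A$. Then every vertex of $\Delta$ lies within distance $j-1$ of every edge, whence $\mathrm{diam}(\Delta)\le j$.

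Now take a shortest cycle of $\Delta$; shortest cycles are isometric. If its length $2i$ is even, the edge opposite to an edge $e$ of the cycle has both ends at distance $i-1$ from $e$, so $\mathrm{diam}(\Delta)\le i$.
\begin{itemize}
\item For $i=1$ (parallel edges), every vertex is an end of $e$, so $|B|=2$, i.e.\ $\Gamma=K_{2,n}$.
\item For $i\ge 2$, since always $g(\Delta)\le 2D+1$ with $D=\mathrm{diam}(\Delta)$, you get $g(\Delta)=2D$.
\end{itemize}

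The same mechanism at $B$ ($\gamma^B_{2j+1}=2$ forces $\beta^B_{2j+1}=0$) shows that no edge of $\Delta$ lies inside the set $\Delta_j(v)$ of vertices at $\Delta$-distance $j$ from $v$, for $j<D$. This has two consequences.
\begin{itemize}
\item An odd girth must equal $2D+1$, so $\Delta$ is a Moore graph.
\item In the even case $\Delta$ is bipartite. An edge inside $\Delta_D(w)$ would lie inside $\Delta_{D-1}(w')$ for a neighbour $w'$ of $w$, because every vertex is within $D-1$ of the edge $ww'$.
\end{itemize}
No comparison with the Moore bound is needed.
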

\comment{
From now on we make the following assumpion:
\begin{equation}\label{eq_30}\tag*{(\textasteriskcentered{})} 
\begin{array}{l}
\Gamma=(V,E) \mathrm{\ is\ a\  unbalanced\ distance-biregular\ graph\  with\ girth\ deficiency\ two.}
\end{array}
\end{equation}
}
A distance-biregular graph with minimum degree at least three is said to be {\em thick}. 

{Let $\Gamma$ be a biregular graph with diameter $d$ and girth $g$ that is not  the complete bipartite graph  $K_{1,n}$. Then $g\le 2d$.} 
We will call the difference  $2d-g$ the {\em girth deficiency} of $\Gamma$. 

By \cite[Lemma 1.3.6]{vm}, distance-biregular graphs with girth
deficiency zero are the incidence graphs of generalized polygons, and
the thick ones have diameter $d=2,3,4,6$ or $8$ \cite{fh,hig}.
Yanushka \cite{yan} proved that a non-thick distance-biregular graph
with girth deficiency zero is the $k$-fold subdivision of a multiple
edge or the $k$-fold subdivision of a thick generalized polygon.
The case of girth deficiency two has been studied for bipartite distance-regular graphs only: they have diameter $d\leq 14$, $d\neq 11$ \cite{ckn}. 
For this reason we are interested {in} distance-biregular graphs with girth deficiency two. 

The following result is rather straightforward.

\begin{lemma}\cite[Proposition 1]{del}\label{lem_1}
Let $d=d_A\ge d_B$. If  $d_A$ is even  then either $d_B=d_A$ or $d_B=d_A-1$; if $d_A$ is odd  then $d_B=d_A$.
\end{lemma}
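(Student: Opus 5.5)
The plan is to compare eccentricities along a single edge and then use parity. Let $u\in A$ and $v\in B$ be adjacent. For any vertex $w$ we have $d(v,w)\le d(u,w)+1$, so $d_B\le d_A+1$; symmetrically, $d_A\le d_B+1$. Together with the normalization $d_A\ge d_B$, this gives $d_B\in\{d_A,d_A-1\}$ immediately. What remains is to show that $d_B=d_A-1$ cannot occur when $d_A$ is odd.

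The key is the bipartite parity structure. Fix $u\in A$ at which the eccentricity $d_A$ is attained, and choose $w$ with $d(u,w)=d_A$. Every neighbour $v$ of $u$ satisfies $d(v,w)\in\{d_A-1,d_A+1\}$ by bipartiteness. The value $d_A+1$ is excluded because it would exceed $d=d_A$. So $d(v,w)=d_A-1$ for every neighbour $v$ of $u$, that is, $c_{d_A}(u)=\deg(u)$.

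First suppose $d_A$ is odd. Then $w\in B$. Take any $v'\in B$; if $d_B=d_A-1$ then $d(v',w)\le d_A-1$ for all $w$. Since $v'$ and $w$ lie in the same class, $d(v',w)$ is even, and so the largest possible distance from $v'$ to a vertex of $B$ is at most $d_A-1$. I would then pick a vertex $x\in A$ together with $u\in A$ at distance $d_A$ from some $w\in B$, and aim to produce a vertex of $B$ at distance $d_A$ from some vertex of $B$, or more directly a vertex of $B$ with eccentricity $d_A$.

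To do this, use distance-regularity at $w$. The vertex $w\in B$ has $u\in\Gamma_{d_A}(w)$, so $\varepsilon(w)\ge d_A$, which gives $d_B\ge d_A$. Combined with $d_B\le d_A$, this yields $d_B=d_A$, which is exactly the claim for odd $d_A$. This is simply the symmetry of distance: $d(w,u)=d_A$. Because distance is symmetric and the eccentricity is constant on each class (distance-biregularity), the existence of a pair at distance $d_A$ with one endpoint in $B$ forces $d_B=d_A$.

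So the whole argument reduces to one question: in a diametral pair, which classes do the endpoints lie in? If $d_A$ is odd, any pair at distance $d_A$ consists of one vertex of $A$ and one of $B$, so $d_B\ge d_A$ and hence $d_B=d_A$. If $d_A$ is even, a diametral pair may lie entirely in $A$, and then only the edge bound $d_B\ge d_A-1$ is available, which gives the two stated options. I expect no real obstacle here. The only care needed is to invoke the constancy of eccentricity on each partition class, which follows because vertices in the same class have the same intersection array, of length $\varepsilon$.
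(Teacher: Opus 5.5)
Your argument is correct. The triangle inequality along an edge gives $|d_A-d_B|\le 1$, hence $d_B\in\{d_A,d_A-1\}$. For odd $d_A$, the far endpoint $w$ of a diametral pair starting at $u\in A$ lies in $B$ by bipartiteness, so $d_B=\varepsilon(w)\ge d_A$.

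The paper gives no proof of its own here; it cites Delorme and calls the result straightforward, and yours is the natural argument. The computation $c_{d_A}(u)=\deg(u)$ and the exploratory sentences at the start of the odd case are never used, so you can delete them.
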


A distance-biregular graph $\Gamma$ with $d_B=d_A$ is said to be {\em balanced}; it is said to be {\em unbalanced} otherwise. 

\begin{proposition}\label{lem_2}
Let $\Gamma$ be an unbalanced distance-biregular graph with girth deficiency two and partition classes $A$ and $B$. Then, the intersection array of $\Gamma$ is 
\[
\begin{array}{|cccc|}
\beta_0^A; &   1,\cdots,   1, &\beta_0^A, & \beta_0^A \\[.05in]
\beta_0^B; &  \underbrace{1, \cdots,  1,}_{d-2} & \beta_0^A & 
\end{array}.
\]
\end{proposition}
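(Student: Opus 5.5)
By Lemma \ref{lem_1}, since $\Gamma$ is unbalanced we have $d=d_A$ even and $d_B=d-1$. The girth is $g=2d-2$, and $g$ is even because $\Gamma$ is bipartite. The plan is to read off the $\gamma$'s from the girth, and then use (\ref{eq_1}) and (\ref{eq_2}) together with a counting/parity argument for the remaining entries.

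\emph{Step 1: $\gamma_i=1$ below half the girth.} For any vertex $u$ and any $i< g/2=d-1$, a vertex $v\in\Gamma_i(u)$ with two neighbours in $\Gamma_{i-1}(u)$ would produce two distinct paths of length $i$ from $u$ to $v$. Their union contains a cycle of length at most $2i<g$, which is impossible. Hence $\gamma_i^A=\gamma_i^B=1$ for $1\le i\le d-2$. This gives the $d-2$ ones in each row.

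\emph{Step 2: the last entries from (\ref{eq_2}).} Since $d_A=d$ is even, (\ref{eq_2}) gives $\gamma_d^A=\beta_0^A$. Since $d_B=d-1$ is odd, it gives $\gamma_{d-1}^B=\beta_0^A$. It remains to show $\gamma_{d-1}^A=\beta_0^A$, which says that the vertices of $\Gamma_{d-1}(u)$, $u\in A$, have all their neighbours in $\Gamma_{d-2}(u)$. Equivalently, $\beta_{d-1}^A=0$ by (\ref{eq_1}), since $d-1$ is odd and the relevant degree is that of $B$-vertices, $\beta_0^B$. Then $\Gamma_d(u)$ would be empty, contradicting $d_A=d$.

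So I would argue differently, via distances from $B$-vertices. Take $u\in A$, $w\in\Gamma_d(u)$ (so $w\in A$), and a neighbour $v\in B$ of $w$. Then $v\in\Gamma_{d-1}(u)$, i.e. $u\in\Gamma_{d-1}(v)$, which is the last layer for $v$, since $d_B=d-1$. At $v$ we know $\gamma^B_{d-1}=\beta_0^A$, so every neighbour of $u$ lies in $\Gamma_{d-2}(v)$. The key claim is really that every $v\in\Gamma_{d-1}(u)$ has all its $\beta_0^B$ neighbours in $\Gamma_{d-2}(u)\cup\Gamma_d(u)$ split as $(\gamma^A_{d-1},\beta^A_{d-1})$. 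The statement asserts $\gamma^A_{d-1}=\beta^A_0$. This only makes sense if $\beta_0^A\le \beta_0^B$ and $\beta^A_{d-1}=\beta^B_0-\beta^A_0$. So I read the array entry as the value $\beta_0^A$, with (\ref{eq_1}) then determining $\beta_{d-1}^A=\beta_0^B-\beta_0^A$.

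\emph{Step 3: proving $\gamma_{d-1}^A=\beta_0^A$ (the main obstacle).} Fix $u\in A$ and $v\in\Gamma_{d-1}(u)\subseteq B$. I would count paths of length $d-1$ from $v$ to $u$. The vertex $u$ lies in the last layer $\Gamma_{d-1}(v)$, and from $v$'s side each vertex in the tree layers $1,\dots,d-2$ has a unique geodesic back to $v$. So the number of geodesics from $v$ to $u$ equals $\gamma_{d-1}^B=\beta_0^A$, using the product $\gamma_1^B\cdots\gamma_{d-2}^B\,\gamma_{d-1}^B$, where all the earlier factors are $1$. From $u$'s side, the number of geodesics $u\to v$ equals $\gamma_1^A\cdots\gamma_{d-2}^A\,\gamma_{d-1}^A=\gamma_{d-1}^A$. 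Since geodesics are counted symmetrically, $\gamma^A_{d-1}=\gamma^B_{d-1}=\beta_0^A$.

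The delicate point is justifying that the number of geodesics equals the product of the $\gamma$'s. This follows by induction on the layer: each vertex at distance $i$ has exactly $\gamma_i$ predecessors. The argument also uses that the unique-predecessor property holds up to layer $d-2$, which is Step 1. Combining Steps 1–3 yields the stated array.
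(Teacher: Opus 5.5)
Your proof is correct and follows the paper's route: the girth gives $\gamma_i^A=\gamma_i^B=1$ for $i\le d-2$, \eqref{eq_2} gives $\gamma_d^A=\gamma_{d-1}^B=\beta_0^A$, and your geodesic count in Step 3 is exactly the argument behind the identity $\gamma_{d-2}^A\gamma_{d-1}^A=\gamma_{d-2}^B\gamma_{d-1}^B$ that the paper simply cites from Delorme. Delete the aside in Step 2 claiming that $\gamma_{d-1}^A=\beta_0^A$ forces $\beta_{d-1}^A=0$: the vertices of $\Gamma_{d-1}(u)$ lie in $B$ and have degree $\beta_0^B$, so \eqref{eq_1} actually gives $\beta_{d-1}^A=\beta_0^B-\beta_0^A$, as you yourself then note.
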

\begin{proof}
Assume $\gamma_i^{\pi}\ge 2$, with $\pi\in\{A,B\}$. By using the definition of $\gamma_i^{\pi}$, it is easily seen that in $\Gamma$ there is a circuit of length $2i$. By the assumption on the girth we have $2i\ge g=2d-2$, that is $i\ge d-1$. Therefore, $\gamma_i^{\pi}=1$, for $i=1,\ldots,d-2$. 
 
From the equality \eqref{eq_2} we get $\gamma_{d}^A=\beta_0^A=\gamma_{d-1}^B$, and, from \cite[Proposition 3]{del}, we have
\[
\gamma_{d-2}^A\gamma_{d-1}^A=\gamma_{d-2}^B\gamma_{d-1}^B,
\]
which gives $\gamma_{d-1}^A=\beta_0^A$.
\end{proof}
Since we assume  $d=d_A$, from Lemma \ref{lem_1} it is evident that the diameter $d$ of an unbalanced distance-biregular graph $\Gamma$ with girth deficiency two is even, and Proposition \ref{lem_2} provides its intersection array. In the following, to make the notation easier, we set 
\[
d=2r+2\ (r\ge 1), \ \ \ \ \ \ \
k=\beta^A_0-1,\ \ \ \ \ \ \ l=\beta^B_0-1.
\]
{The following result is due to Lato.
\begin{lemma}\label{lem_19}\cite[Lemma 2.4.4]{lato2}
Let $\Gamma=(V,E)$ be a distance-biregular graph with partition classes $A$ and $B$ and intersection array
\[
\begin{array}{|c|}
\beta_0^A;\ \  1,\ \gamma_2^A,\ \cdots, \ \gamma_{d_A}^A\\[.05in]
\beta_0^B;\ \ 1,\ \gamma_2^B,\ \cdots, \ \gamma_{d_B}^B
\end{array}\ .
\]
Then, we have  $|\Gamma_i(u)|\gamma_i^\pi=|\Gamma_{i-1}(u)|\beta_{i-1}^\pi$  for each $\pi\in\{A,B\}$, $u\in\pi$ and $i=1,\ldots,\varepsilon(u)$.
\end{lemma}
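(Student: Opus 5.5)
This is a double-counting argument. Fix $\pi\in\{A,B\}$, $u\in\pi$ and $1\le i\le\varepsilon(u)$. We count the set of ordered pairs
\[
P_i=\{(v,w): v\in\Gamma_{i-1}(u),\ w\in\Gamma_i(u),\ vw\in E\}.
\]
This is the set of edges joining the $(i-1)$-st and $i$-th distance layers around $u$. We count it once from each side.

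\textbf{Counting from $v$.} Fix $v\in\Gamma_{i-1}(u)$. Because $u$ is distance-regularized, the number of neighbours of $v$ lying in $\Gamma_i(u)$ is $\beta_{i-1}^\pi$, regardless of which $v$ we picked. The $\beta$'s are indexed by the class of the base vertex $u$, not of $v$, so this is the correct constant even though $v$ alternates between classes as $i$ varies. Summing over $v$ gives
\[
|P_i|=|\Gamma_{i-1}(u)|\,\beta_{i-1}^\pi .
\]

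\textbf{Counting from $w$.} Fix $w\in\Gamma_i(u)$. By the definition of $\gamma_i^\pi$, the vertex $w$ has exactly $\gamma_i^\pi$ neighbours in $\Gamma_{i-1}(u)$. Summing over $w$ gives
\[
|P_i|=|\Gamma_i(u)|\,\gamma_i^\pi .
\]
Equating the two expressions for $|P_i|$ yields the identity.

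\textbf{Range of $i$.} The range $1\le i\le\varepsilon(u)$ guarantees that both layers are nonempty. It also guarantees that $\beta_{i-1}^\pi$ is defined, since $i-1<d_\pi$. There is no real obstacle in this argument.
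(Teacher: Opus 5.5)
Your double count of the edges between $\Gamma_{i-1}(u)$ and $\Gamma_i(u)$ is correct, and you rightly index $\beta_{i-1}^\pi$ and $\gamma_i^\pi$ by the class of the base vertex $u$. The paper gives no proof of its own here (it quotes the result from Lato's thesis), and this edge count between consecutive distance layers is the standard argument for it.
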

}
We now apply Lemma \ref{lem_19} to the intersection array given in
Proposition \ref{lem_2}. This yields explicit formulas for the remaining
intersection numbers and for the sizes of the sets $\Gamma_i(u)$, the
bipartition classes, and the edge set.
\begin{proposition}\label{cor_1}
Let $\Gamma=(V,E)$ be an unbalanced distance-biregular graph with girth deficiency two and partition classes $A$ and $B$. Then,
\begin{enumerate}
\item
\[
\begin{array}{c}
\begin{array}{lcl}
\beta_{2j-1}^A & = & l, \\[.06in]
\beta_{2j}^A & = & k,
\end{array} \ \begin{array}{lcl}
\beta_{2j-1}^B & = & k, \\[.06in]
\beta_{2j}^B & = & l,
\end{array} \ \ \ \mbox{ for $j=1,\ldots,r$}; \\[.35in]
\beta_{d-1}^A  =  l-k.
\end{array}
\]
\item
If $u\in A$, then
\[
\begin{array}{c}
\begin{array}{lcl}
|\Gamma_{2j-1}(u)|=k^{j-1}l^{j-1}(k+1)  \\[.1in]
|\Gamma_{2j}(u)|=k^{j-1}l^{j}(k+1)
\end{array} \ \ \ 
\mbox{ for $j=1,\ldots,r$};\\[.35in]
|\Gamma_{d-1}(u)|=k^rl^{r};\\[.1in]
|\Gamma_{d}(u)|=\frac{k^rl^{r}(l-k)}{k+1}.
\end{array}
\]
If $u\in B$, then
\[
\begin{array}{l}
\begin{array}{lcl}
|\Gamma_{2j-1}(u)|=k^{j-1}l^{j-1}(l+1),  \\[.1in]
|\Gamma_{2j}(u)|=k^{j}l^{j-1}(l+1), 
\end{array} \ \ \ 
\mbox{ for $j=1,\ldots,r$};\\[.35in]
|\Gamma_{d-1}(u)|=\frac{k^rl^{r}(l+1)}{k+1} .
\end{array}
\]
\item
\[
|A|=\frac{(k^{r+1}l^r(l+1)-k-1)(l+1)}{(k+1)(kl-1)},
\]
\[
|B|=\frac{k^{r+1}l^r(l+1)-k-1}{kl-1},
\]
\[
|E|=\frac{(k^{r+1}l^r(l+1)-k-1)(l+1)}{kl-1}.
\]
\end{enumerate}
\end{proposition}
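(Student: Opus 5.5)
We will derive everything from the intersection array of Proposition \ref{lem_2}, the relation \eqref{eq_1}, and Lemma \ref{lem_19}, working one item at a time.

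\emph{Item 1.} By Proposition \ref{lem_2}, $\gamma_i^A=\gamma_i^B=1$ for $1\le i\le d-2=2r$, while $\gamma_{d-1}^A=\beta_0^A=k+1$. We substitute these values into \eqref{eq_1}. For $\pi=A$ and odd $i=2j-1\le 2r$ this gives $\beta_{2j-1}^A=\beta_0^B-1=l$, and for even $i=2j\le 2r$ it gives $\beta_{2j}^A=\beta_0^A-1=k$. The case $\pi=B$ is symmetric and yields $\beta_{2j-1}^B=k$ and $\beta_{2j}^B=l$. Finally, $d-1=2r+1$ is odd, so $\beta_{d-1}^A=\beta_0^B-\gamma_{d-1}^A=(l+1)-(k+1)=l-k$.

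\emph{Item 2.} We apply Lemma \ref{lem_19} in the form $|\Gamma_i(u)|=|\Gamma_{i-1}(u)|\beta_{i-1}^\pi/\gamma_i^\pi$ and proceed by induction on $i$, starting from $|\Gamma_0(u)|=1$.
\begin{itemize}
\item For $u\in A$, the first step gives $|\Gamma_1(u)|=k+1$. While $\gamma_i=1$, the factors then alternate between $l$ (from $\beta_{\rm odd}^A$) and $k$ (from $\beta_{\rm even}^A$), which gives the stated closed forms up to $|\Gamma_{2r}(u)|=k^{r-1}l^r(k+1)$.
\item At $i=d-1$ we have $\gamma_{d-1}^A=k+1$ and $\beta_{2r}^A=k$, so $|\Gamma_{d-1}(u)|=k^rl^r$.
\item At $i=d$ we have $\gamma_d^A=k+1$ and $\beta_{d-1}^A=l-k$, which gives the stated quotient.
\item For $u\in B$ the same argument applies with the roles of $k$ and $l$ swapped. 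The only difference is the last step: $d_B=d-1$, $\gamma_{d-1}^B=\beta_0^A=k+1$ and $\beta_{2r}^B=l$, so $|\Gamma_{d-1}(u)|=k^rl^{r-1}(l+1)\cdot l/(k+1)$.
\end{itemize}

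\emph{Item 3.} For $u\in B$, the set $B$ consists of the vertices at even distance from $u$. Hence
\[
|B|=1+\sum_{j=1}^{r}k^jl^{j-1}(l+1)=1+(l+1)k\,\frac{(kl)^r-1}{kl-1}.
\]
Putting this over the common denominator $kl-1$ gives the numerator $kl-1+k^{r+1}l^r(l+1)-kl-k=k^{r+1}l^r(l+1)-k-1$, as claimed.

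Double counting edges gives $|E|=|B|(l+1)=|A|(k+1)$, which produces the formulas for $|E|$ and $|A|$. As a cross-check, $|A|$ can also be computed directly by summing $|\Gamma_{2j}(u)|$ over $u\in A$, including $|\Gamma_d(u)|$.

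The only delicate point is the boundary indices $d-1$ and $d$. There the $\gamma$ values are no longer $1$, and one must use the correct parity of $d-1=2r+1$ in \eqref{eq_1}. Once those values are fixed as above, the rest is routine geometric-series algebra.
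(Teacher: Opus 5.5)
Your proof is correct and follows essentially the same route as the paper: the $\beta$'s come from Proposition \ref{lem_2} via \eqref{eq_1}, the sphere sizes from recursive application of Lemma \ref{lem_19}, and the class sizes from summing the even-distance spheres together with $|E|=|A|(k+1)=|B|(l+1)$. The only cosmetic difference is that you sum spheres only for $|B|$ and obtain $|A|$ by double counting, whereas the paper sums the spheres for both classes.
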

\begin{proof}
The values of $\beta^A_i$ and $\beta^B_i$  are obtained from Proposition \ref{lem_2} by using \eqref{eq_1}. 

Let $u\in\pi$, with $\pi\in\{A,B\}$. From {Lemma \ref{lem_19}}, we have  $|\Gamma_i(u)|\gamma_i^\pi=|\Gamma_{i-1}(u)|\beta_{i-1}^\pi$, for  $i=1,\ldots,\varepsilon(u)$. By taking into account the  values of $\beta^\pi_i$ and those of $\gamma^\pi_i$ given by Proposition \ref{lem_2} and applying recursively the above formula, we find the values  of $|\Gamma_{i}(u)|$, for $i=1,\ldots,\varepsilon(u)$.

 For a fixed vertex $u\in A$, the partition class $A$ is the disjoint
union of the sets $\Gamma_{2j}(u)$, for $j=0,\ldots,r+1$, whereas, for
a fixed vertex $u\in B$, the partition class $B$ is the disjoint union
of the sets $\Gamma_{2j}(u)$, for $j=0,\ldots,r$. Thus,
\[
|A|=\sum_{j=0}^{r+1}|\Gamma_{2j}(u)| \quad (u\in A),
\qquad
|B|=\sum_{j=0}^{r}|\Gamma_{2j}(u)| \quad (u\in B).
\]

  Finally, $|E|=|A|\beta_0^A=|B|\beta_0^B$.
\end{proof}
\begin{corollary}\label{cor_2}
Let $\Gamma$ be an unbalanced distance-biregular graph with girth deficiency two. Then, the parameters $r$, $k$ and $l$ satisfy the following feasibility conditions:
\begin{enumerate}
\item\label{item1} $k<l$;
\item $l^r(l+1)\equiv 0\,\mod\, (k+1)$.
\end{enumerate}
\end{corollary}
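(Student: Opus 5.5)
Both conditions follow from Proposition \ref{cor_1}, by reading off quantities that must be positive integers.

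For the first condition I will use $\beta_{d-1}^A = l-k$ from part 1 of Proposition \ref{cor_1}. Alternatively, $|\Gamma_d(u)| = k^r l^r (l-k)/(k+1)$ for $u\in A$. Since $d_A = d > d_B$, a vertex $u \in A$ has eccentricity $d$, so $\Gamma_d(u)\neq\emptyset$ and $|\Gamma_d(u)|>0$. We have $k,l\ge 1$, since a degree-one vertex would force $\Gamma=K_{1,n}$, which is excluded by the girth assumption. Hence $k^rl^r>0$ and $k+1>0$, and positivity of $|\Gamma_d(u)|$ forces $l-k>0$, that is, $k<l$.

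For the second condition I will use the size of the last layer seen from a vertex of $B$. For $u\in B$, part 2 of Proposition \ref{cor_1} gives $|\Gamma_{d-1}(u)| = k^r l^r (l+1)/(k+1)$, and this must be an integer. The $k^r$ factor is not a problem, since $\gcd(k,k+1)=1$. So $k+1$ divides $k^r l^r(l+1)$ if and only if it divides $l^r(l+1)$, which gives $l^r(l+1)\equiv 0 \pmod{k+1}$.

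The only point requiring care is the coprimality step, and it is immediate. As a consistency check, integrality of $|\Gamma_d(u)|$ for $u\in A$ gives $k+1 \mid l^r(l-k)$. This is equivalent to $k+1\mid l^r(l+1)$ because $l-k\equiv l+1 \pmod{k+1}$.
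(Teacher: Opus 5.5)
Your proof is correct and matches the paper's argument. The paper likewise gets $k<l$ from positivity of $\beta^A_{d-1}=l-k$, which is equivalent to your $|\Gamma_d(u)|>0$ for $u\in A$. It gets the congruence from integrality of $|\Gamma_d(u)|$ for $u\in A$, or of $|\Gamma_{d-1}(u)|$ for $u\in B$, using $k\equiv -1 \pmod{k+1}$, which is exactly your coprimality step.
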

\begin{proof}
Since $\beta_{d-1}^A$ must be a positive integer,  $k<l$. 
From the size of $\Gamma_{d}(u)$, with $u\in A$, or equivalently from $\Gamma_{d-1}(u)$, with $u\in B$, we get $l^r(l+1)\equiv 0\,\mod\, (k+1)$, as $k\,\equiv -1\, \mod\, (k+1)$.
\end{proof}
\begin{corollary}\label{cor_3}
Let $d\ge 2$ and let $q$ be a positive rational number. Then there are only finitely many unbalanced distance-biregular graphs
with diameter $d$, girth deficiency two and degrees $k+1$ and $l+1$
such that $k/l=q$.
\end{corollary}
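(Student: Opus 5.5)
Fix $d=2r+2$ and $q=k/l$. By Proposition \ref{cor_1}, an unbalanced distance-biregular graph with girth deficiency two has $|V|=|A|+|B|$ determined by $r,k,l$. So for each pair $(k,l)$ there are only finitely many graphs, namely at most the number of bipartite graphs on $|A|+|B|$ vertices. It therefore suffices to show that only finitely many pairs $(k,l)$ with $k/l=q$ satisfy the feasibility conditions of Corollary \ref{cor_2}.

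Write $q=a/b$ in lowest terms, with $a<b$ by Corollary \ref{cor_2}(\ref{item1}). Every admissible pair then has the form $k=ta$, $l=tb$ for some positive integer $t$. Corollary \ref{cor_2} gives the divisibility
\[
ta+1 \mid (tb)^r(tb+1).
\]
We show this can hold for only finitely many $t$. Since $\gcd(ta+1,t)=1$, the factor $t^r$ can be discarded, leaving $ta+1\mid b^r(tb+1)$.

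Next, note that $a(tb+1)-b(ta+1)=a-b$, so $ta+1$ divides $b^r\big(a(tb+1)-b(ta+1)\big)=b^r(a-b)$. Here we used that $ta+1$ divides both $b^r(tb+1)$ and $b^{r+1}(ta+1)$. Because $a\neq b$, the integer $b^r(b-a)$ is a fixed nonzero number depending only on $q$ and $r$. Hence
\[
ta+1\le b^r(b-a),
\]
which bounds $t$, and so only finitely many pairs $(k,l)$ occur.

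The only real subtlety is this elimination step, which forces divisibility of a fixed nonzero integer. It relies on $k<l$, i.e. $a\ne b$. Together with the finite vertex count for each parameter set, this completes the argument.
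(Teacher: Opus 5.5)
Your proof is correct and follows essentially the same route as the paper. The paper also turns the congruence of Corollary~\ref{cor_2} into the divisibility $k+1 \mid L^r(L-K)$, which in your notation is $ta+1\mid b^r(b-a)$, and then uses that $|V|$ is determined by $(r,k,l)$. The only difference is that the paper multiplies by $K^{r+1}$ and reduces using $mK\equiv -1 \pmod{k+1}$, whereas you discard $t^r$ by coprimality and then eliminate linearly.
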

\begin{proof}
Note that, by Corollary \ref{cor_2}, there are no unbalanced distance-biregular graphs with girth deficiency two if $q\ge 1$. Therefore, we assume $q=K/L <1$ with $GCD(K,L)=1$. Set $k=mK$ and $l=mL$, for some positive integer $m$.

By multiplying both sides of the  congruence in Corollary \ref{cor_2} by $K^{r+1}$, we get $m^rK^rKL^r(mL+1)\equiv 0\,\mod\, (k+1)$, i.e., $m^rK^rL^r(mKL+K)\equiv 0\,\mod\, (k+1)$. Since $mK\equiv -1\,\mod\, (k+1)$, we get  $L^r(L-K)\equiv 0\,\mod\, (k+1)$. This yields that $k+1$ is a divisor of $L^r(L-K)$.
This implies that there are only finitely many possibilities for
$k+1$. For each such value, $l=kq^{-1}$ is fixed. Proposition
\ref{cor_1} then fixes $|V|=|A|+|B|$, and there are only finitely many
graphs on a fixed number of vertices.
\end{proof}
\section{The adjacency algebra of an unbalanced\\
distance-biregular graph with girth deficiency two} \label{sec_3}

Taking inspiration from \cite{hig}, in this section  we define an association scheme on the edge-set of an unbalanced distance-biregular graph with girth deficiency two. In order to define the (binary) relations {on the edges of the graph} we will make use of the {\em intersection diagram} with respect to an edge \cite{nom}. Although the definition can be given for any (connected) graph, for simplicity  we consider only distance-biregular graphs.

Let $\Gamma=(V,E)$ be a distance-biregular graph with partition classes $A$ and $B$.  Let $uv\in E$ with  $u\in A$ (hence $v\in B$). For $0\le i\le d_A$ and $0\le j\le d_B$, we define 
\[
D^i_j=D^i_j(u,v)=\Gamma_i(u)\cap\Gamma_j(v).
\]
Clearly $D^0_1=\{u\}$ and  $D^1_0=\{v\}$. In addition,  we have  $D^i_j=\varnothing$ if $|i-j|\ge2$ by the triangle inequality, and $D^i_i=\varnothing$, as $\Gamma$ is bipartite. 

The family
\[
\cD(u,v)=
\{D^i_{i+1}:0\le i\le d_B-1\}
\cup
\{D^{j+1}_j:0\le j\le\min\{d_A-1,d_B\}\},
\]
which consists of pairwise disjoint sets, is the
{\em intersection family with respect to the edge} $uv$, or the
$uv${\em-family} for short. The family
\[
\cD(u,v)=
\{D^i_{i+1}:0\le i\le d_B-1\}
\cup
\{D^{j+1}_j:0\le j\le\min\{d_A-1,d_B\}\},
\]
which consists of pairwise disjoint sets, is the
{\em intersection family with respect to the edge} $uv$, or the
$uv${\em-family} for short.It is useful to draw the $uv$-family in  a diagram as follows:
\captionsetup[figure]{name={Fig.}}
\begin{figure}[H]
\centering
\tikzstyle{vertexdraw}=[circle, draw=black, inner sep=1.5, minimum size=24.5pt]
\tikzstyle{vertexnodraw}=[circle, inner sep=1.5, minimum size=24.5pt]
\tikzstyle{vertexblack}=[circle, draw=black, fill=black, inner sep=0pt,minimum size=5pt]
\begin{tikzpicture}[baseline=-3]

   \draw (-0,0.75) node[vertexdraw] (D01) {$\scriptstyle{D^0_1}$};
   \draw (1.5,0.75) node[vertexnodraw] (dotsA1)  {$\scriptstyle{\cdots}$};
   \draw (3.0,0.75) node[vertexdraw] (Di-1i)  {$\scriptstyle{\scriptstyle{D^{i-1}_{i}}}$};
   \draw (4.5,0.75) node[vertexdraw] (Dii+1)  {$\scriptstyle{\scriptstyle{D^{i}_{i+1}}}$};
   \draw (6.0,0.75) node[vertexdraw] (Di+1i+2)  {$\scriptstyle{\scriptstyle{D^{i+1}_{i+2}}}$};
   \draw (7.5,0.75) node[vertexnodraw] (dotsA2)  {$\scriptstyle{\cdots}$};
   
   \draw (0,-0.75) node[vertexdraw] (D10) {$\scriptstyle{D^1_0}$};
   \draw (1.5,-0.75) node[vertexnodraw] (dotsB1)  {$\scriptstyle{\cdots}$};
   \draw (3.0,-0.75) node[vertexdraw] (Dii-1)  {$\scriptstyle{\scriptstyle{D^{i}_{i-1}}}$};
   \draw (4.5,-0.75) node[vertexdraw] (Di+1i)  {$\scriptstyle{\scriptstyle{D^{i+1}_{i}}}$};
   \draw (6.0,-0.75) node[vertexdraw] (Di+2i+1)  {$\scriptstyle{\scriptstyle{D^{i+2}_{i+1}}}$};
   \draw (7.5,-0.75) node[vertexnodraw] (dotsB2)  {$\scriptstyle{\cdots}$};

   \draw[-] (D01.east) -- (dotsA1.west);
   \draw[-] (dotsA1.east) -- (Di-1i.west);
   \draw[-] (Di-1i.east) -- (Dii+1.west);
   \draw[-] (Dii+1.east) -- (Di+1i+2.west);
   \draw[-] (Di+1i+2.east) -- (dotsA2.west);
   
   \draw[-] (D10.east) -- (dotsB1.west);
   \draw[-] (dotsB1.east) -- (Dii-1.west);
   \draw[-] (Dii-1.east) -- (Di+1i.west);
   \draw[-] (Di+1i.east) -- (Di+2i+1.west);
   \draw[-] (Di+2i+1.east) -- (dotsB2.west); 
   
   \draw[-] (D01.south) -- (D10.north);
      \draw[-] (Di-1i.south) -- (Dii-1.north);
   \draw[-] (Dii+1.south) -- (Di+1i.north);
   \draw[-] (Di+1i+2.south) -- (Di+2i+1.north); 
   \end{tikzpicture}
   \caption{The $uv$-diagram for a bipartite graph} \label{diag0}
\end{figure}

The above diagram  is the {\em intersection diagram with respect to the edge} $uv$, or the $uv${\em-diagram} for short; a line between two components indicates the possibility of the existence of an edge intersecting each of them. As $u\in A$, then $D^{2j}_{2j+1},D^{2j}_{2j-1}\subset A$, and  $D^{2j-1}_{2j},D^{2j+1}_{2j}\subset B$. Therefore, there is no edge inside each $D^i_j$. For $x\in D^{i}_{i+1}$, let
\[
a(x)=e(x,D^{i+1}_{i}) \ \ \ b(x)=e(x,D^{i+1}_{i+2}) \ \ \ c(x)=e(x,D^{i-1}_{i}),
\]
where  $e(x,Y)$ denotes the number of edges connecting $x$ with a vertex in the subset $Y$ of $V$. The triple $(a(x),b(x),c(x))$ is  the {\em edge pattern of} $x$ {\em with respect to} $uv$. The edge pattern of $x\in D^{i+1}_{i}$ with respect to $uv$ is defined symmetrically. The edge pattern of a vertex of a distance-biregular graph was calculated by Nomura in \cite{nom}.
\begin{lemma}\label{lem_20}\cite{nom}
Let  $\Gamma=(V,E)$  be a distance-biregular graph with intersection array
\[
\begin{array}{|c|}
\beta_0^A;\ \  1,\ \gamma_2^A,\ \cdots, \ \gamma_{d_A}^A\\[.05in]
\beta_0^B;\ \ 1,\ \gamma_2^B,\ \cdots, \ \gamma_{d_B}^B
\end{array}\ .
\]
Then,  there is the following relation between the edge pattern of $x\in V$ and the intersection numbers of $\Gamma$:
\[
(a(x),b(x),c(x))=\left\{
\begin{array}{ll}
(\beta^A_i-\beta^B_{i+1},\beta^B_{i+1},\gamma^A_{i}) & \mathrm{\ if \ } x\in D^{i}_{i+1}\\[.1in]
(\beta^B_i-\beta^A_{i+1},\beta^A_{i+1},\gamma^B_{i}) & \mathrm{\ if \ } x\in D^{i+1}_{i}
\end{array}
\right.,
\]
where, by convention, we set
$\gamma_0^A=\gamma_0^B=\beta_{d_A}^A=\beta_{d_B}^B=0$.
\end{lemma}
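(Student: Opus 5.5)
The plan is a direct count. I would fix the edge $uv$ with $u\in A$, $v\in B$, and a vertex $x\in D^i_{i+1}$, so that $d(u,x)=i$ and $d(v,x)=i+1$. I would then sort the neighbours $y$ of $x$ by the pair $(d(u,y),d(v,y))$, and read off each entry of the edge pattern from the intersection numbers at $u$ (valid since $u\in A$) or at $v$ (valid since $v\in B$).

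First I record the constraints on a neighbour $y$ of $x$. By the triangle inequality and bipartiteness, $d(u,y)\in\{i-1,i+1\}$ and $d(v,y)\in\{i,i+2\}$. Since $u$ and $v$ are adjacent, $|d(u,y)-d(v,y)|\le 1$. This pins $y$ into one of three cells of the $uv$-family: $D^{i-1}_i$, $D^{i+1}_i$ or $D^{i+1}_{i+2}$. This is exactly the picture of Fig.~\ref{diag0}.

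\emph{The entry $c(x)$.} If $d(u,y)=i-1$, the constraints force $d(v,y)=i$. Hence every neighbour of $x$ in $\Gamma_{i-1}(u)$ lies in $D^{i-1}_i$. So $c(x)=|\Gamma_{i-1}(u)\cap\Gamma_1(x)|=\gamma^A_i$.

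\emph{The entry $b(x)$.} If $d(v,y)=i+2$, then necessarily $d(u,y)=i+1$. Hence the neighbours of $x$ in $D^{i+1}_{i+2}$ are exactly $\Gamma_{i+2}(v)\cap\Gamma_1(x)$. Because $d(v,x)=i+1$ and $v\in B$, this gives $b(x)=\beta^B_{i+1}$.

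\emph{The entry $a(x)$.} The neighbours of $x$ in $\Gamma_{i+1}(u)$ number $\beta^A_i$. They split disjointly into those in $D^{i+1}_i$ and those in $D^{i+1}_{i+2}$. Therefore $a(x)=\beta^A_i-b(x)=\beta^A_i-\beta^B_{i+1}$.

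Boundary cases are absorbed by the stated conventions. For $i=0$ we have $x=u$ and $c=\gamma_0^A=0$. If $i+1=d_B$, then $D^{i+1}_{i+2}=\varnothing$, matching $\beta^B_{d_B}=0$. Similarly, $\beta^A_{d_A}=0$ covers the top end on the $u$ side.

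For $x\in D^{i+1}_i$ the argument is identical with the roles of $u$ and $v$ (and of $A$ and $B$) interchanged. Then $c(x)$ counts neighbours in $\Gamma_{i-1}(v)$, giving $\gamma^B_i$. $b(x)$ counts neighbours in $\Gamma_{i+2}(u)$, giving $\beta^A_{i+1}$. And $a(x)=\beta^B_i-\beta^A_{i+1}$.

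The only point needing care is the cell-assignment step: checking that the parity and triangle constraints really exclude the cells $D^{i-1}_{i-2}$ and $D^{i+1}_i$ from the "$i-1$" and "$i+2$" counts respectively. Once that is verified, everything else is immediate from the definitions of $\beta$ and $\gamma$.
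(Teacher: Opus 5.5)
Your argument is correct and complete. The paper gives no proof of this lemma; it cites it from Nomura. Your direct count is a self-contained justification of it: you sort the neighbours of $x$ into the three admissible cells $D^{i-1}_i$, $D^{i+1}_i$, $D^{i+1}_{i+2}$ using parity and the triangle inequality, read $c(x)$ off the intersection numbers at $u$ and $b(x)$ off those at $v$, and get $a(x)$ by subtraction. It also shows that the conventions $\gamma_0^A=\gamma_0^B=0$ and $\beta^A_{d_A}=\beta^B_{d_B}=0$ are exactly what the empty boundary cells require. This includes the cell $D^{d_B+1}_{d_B}$, which appears in the family only in the unbalanced case the paper uses, where it gives the pattern $(0,0,\gamma^B_{d_B})$.

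One small slip in your last paragraph: the cell to exclude from the ``$i+2$'' count is $D^{i+3}_{i+2}$, not $D^{i+1}_i$. The cell $D^{i+1}_i$ has $d(v,y)=i$, so it never enters that count. Your earlier step (``if $d(v,y)=i+2$ then $d(u,y)=i+1$'') already handles this correctly.
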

 We apply this result to our case.
\begin{proposition}
Let $\Gamma=(V,E)$ be an unbalanced distance-biregular graph with diameter $d$ and girth deficiency two. Then, for every  $uv\in E$, with $u\in A$, the $uv$-diagram is
\begin{figure}[H]
\centering
\tikzstyle{vertexdraw}=[circle, draw=black, inner sep=1.5, minimum size=24.5pt]
\tikzstyle{vertex}=[circle, inner sep=1.5, minimum size=24.5pt]
\tikzstyle{vertexblack}=[circle, draw=black, fill=black, inner sep=0pt,minimum size=5pt]
\begin{tikzpicture}[baseline=-3]
  \draw (0,0.75) node[vertexdraw] (D01) {$\scriptstyle{D^0_1}$};
  \draw (1.5,0.75) node[vertexdraw] (D12)  {$\scriptstyle{\scriptstyle{D^{1}_{2}}}$};
   \draw (3.0,0.75) node[vertex] (dotsA1)  {$\scriptstyle{\cdots}$};
   
   \draw (4.5,0.75) node[vertexdraw] (Dd-3d-2)  {$\scriptstyle{\scriptstyle{D^{d-3}_{d-2}}}$};
   \draw (6.0,0.75) node[vertexdraw] (Dd-2d-1)  {$\scriptstyle{\scriptstyle{D^{d-2}_{d-1}}}$};

   \draw (0,-0.75) node[vertexdraw] (D10) {$\scriptstyle{D^1_0}$};   
   \draw (1.5,-0.75) node[vertexdraw] (D21)  {$\scriptstyle{\scriptstyle{D^{2}_{1}}}$};
   \draw (3.0,-0.75) node[vertex] (dotsB1)  {$\scriptstyle{\cdots}$};
   \draw (4.5,-0.75) node[vertexdraw] (Dd-2d-3)  {$\scriptstyle{\scriptstyle{D^{d-2}_{d-3}}}$};
   \draw (6.0,-0.75) node[vertexdraw] (Dd-1d-2)  {$\scriptstyle{\scriptstyle{D^{d-1}_{d-2}}}$};
   \draw (7.5,-0.75) node[vertexdraw] (Ddd-1)  {$\scriptstyle{\scriptstyle{D^{d}_{d-1}}}$};
   
   \draw[-] (D01.east) -- (D12.west);
   \draw[-] (D12.east) -- (dotsA1.west);
   \draw[-] (dotsA1.east) -- (Dd-3d-2.west);
   \draw[-] (Dd-3d-2.east) -- (Dd-2d-1.west);

   \draw[-] (D10.east) -- (D21.west);
   \draw[-] (D21.east) -- (dotsB1.west);
   \draw[-] (dotsB1.east) -- (Dd-2d-3.west);
   \draw[-] (Dd-2d-3.east) -- (Dd-1d-2.west);
   \draw[-] (Dd-1d-2.east) -- (Ddd-1.west);
      
   \draw[-] (D01.south) -- (D10.north);
   \draw[-] (Dd-2d-1.south) -- (Dd-1d-2.north);
 \end{tikzpicture}
   \caption{The $uv$-diagram for an unbalanced  distance-biregular graph with girth deficiency two} \label{diag1}
\end{figure}
\end{proposition}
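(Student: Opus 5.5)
The plan is to compute, via Lemma \ref{lem_20}, the edge pattern of every vertex in each set $D^i_{i+1}$ and $D^{i+1}_i$. We use the intersection numbers of Proposition \ref{lem_2} and Proposition \ref{cor_1}, with $d_A=d$ and $d_B=d-1$. The diagram then asserts three things.

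First, which components are nonempty. The upper row consists of $D^i_{i+1}$ for $0\le i\le d_B-1=d-2$. The lower row consists of $D^{j+1}_j$ for $0\le j\le \min\{d_A-1,d_B\}=d-1$, which gives components up to $D^d_{d-1}$. Nonemptiness follows by induction along each row. A vertex $x$ in a component has $b(x)>0$ exactly when the next component in its row exists: $b(x)=\beta^B_{i+1}$ on the top row and $b(x)=\beta^A_{i+1}$ on the bottom row. These values are positive (equal to $k$ or $l$) whenever the index is below the eccentricity, and for the last bottom step $\beta^A_{d-1}=l-k>0$ by Corollary \ref{cor_2}.

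Second, the horizontal edges. These are exactly the contributions $b(x)$ and $c(x)$ above, so the horizontal lines are drawn between consecutive nonempty components of each row. No other edges are possible, since $D^i_j=\varnothing$ for $|i-j|\ge 2$ and for $i=j$.

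Third, the vertical edges, which are the main point. For $x\in D^i_{i+1}$ we have $a(x)=\beta^A_i-\beta^B_{i+1}$. For $i=0$ this gives $a(x)=k+1-k=1$, the edge $uv$. For $1\le i\le d-3$, Proposition \ref{cor_1} gives alternating values: $i$ odd yields $l-l=0$ and $i$ even yields $k-k=0$. For $i=d-2=2r$ we get $a(x)=\beta^A_{2r}-\beta^B_{2r+1}$, where $\beta^B_{2r+1}=\beta^B_{d-1}=0$ by the convention $\beta^B_{d_B}=0$. Hence $a(x)=k>0$, and this is the vertical line between $D^{d-2}_{d-1}$ and $D^{d-1}_{d-2}$.

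Symmetrically, for $x\in D^{i+1}_i$ we have $a(x)=\beta^B_i-\beta^A_{i+1}$, which vanishes for $1\le i\le d-3$ by the same alternation. At $i=d-2$ it equals $l-(l-k)=k$, consistent with the count above. At $i=d-1$ the component $D^d_{d-1}$ has no partner $D^{d-1}_{d}$, because $d_B=d-1$. As a consistency check, $\beta^B_{d-1}-\beta^A_d=0-0=0$.

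The only delicate point is the bookkeeping at the ends. Specifically, one must use the conventions $\beta^B_{d-1}=\beta^A_d=0$ correctly and match the parity of $d-2=2r$ with the formulas of Proposition \ref{cor_1}. The rest is direct substitution.
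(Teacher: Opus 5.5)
Your proposal is correct and follows essentially the same route as the paper: substitute the intersection numbers from Propositions \ref{lem_2} and \ref{cor_1} into Nomura's edge-pattern formula (Lemma \ref{lem_20}), using the conventions $\beta^B_{d-1}=\beta^A_d=0$, and your values (vertical count $1$ at $i=0$, $0$ for $1\le i\le d-3$, $k$ at $i=d-2$, $0$ at $D^d_{d-1}$) match the paper's table. Your explicit nonemptiness check along each row, which uses $b(x)>0$ together with $\beta^A_{d-1}=l-k>0$, is a small addition that the paper leaves implicit.
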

\begin{proof}
Let $uv\in E$ and $x\in D^{i}_{i+1}$. {Recall that $d=2r+2$ and  $\gamma_0^A=\gamma_0^B=\beta_{d_A}^A=\beta_{d_B}^B=0$  by notational convention.}
By taking into account Lemma \ref{lem_20}, Proposition \ref{lem_2} and  Proposition \ref{cor_1}, Part 1, straightforward calculations give the following edge pattern of $x$ with respect to $uv$:
\[
\begin{array}{ll}
x\in D^{2j-1}_{2j}: & (a(x),b(x),c(x))=(0,l,1), \ \ \mathrm{for\ }j=1,\ldots, r;\\[.07in]
x\in D^{2j}_{2j+1}: & (a(x),b(x),c(x))=\left\{\begin{array}{ll}
(1,k,0) & \mathrm{for\ }j=0 \\[.05in]
(0,k,1) & \mathrm{for\ }j=1,\ldots, r-1 \\[.05in](k,0,1) &  \mathrm{for\ }j=r
\end{array}
\right.
\end{array}.
\]
Similarly, for $x\in D^{i+1}_{i}$ we have:
\[
\begin{array}{ll}
x\in D^{2j}_{2j-1}: & (a(x),b(x),c(x))=\left\{\begin{array}{ll}
(0,k,1) & \mathrm{for\ }j=1,\ldots, r \\[.05in]
(0,0,k+1) &  \mathrm{for\ }j=r+1
\end{array}
\right.;\\[.2in]
x\in D^{2j+1}_{2j}: & (a(x),b(x),c(x))=\left\{\begin{array}{ll}
(1,l,0) & \mathrm{for\ }j=0 \\[.05in]
(0,l,1) & \mathrm{for\ }j=1,\ldots, r-1 \\[.05in]
(k,l-k,1) &  \mathrm{for\ }j=r
\end{array}
\right.
\end{array}.
\]
\end{proof}

For any given pair $(uv,u'v')$ of edges of $\Gamma=(V,E)$, we adopt the convention that  $u\in A$  (while $u'$ is not necessarily) and $d(u',u)<d(v',u)$. We define the following non-diagonal relations on $E$, corresponding to the edges of the $uv$-diagram in Fig. \ref{diag1}:
\begin{equation}\label{eq_11}
\begin{array}{lll}
R^{\uparrow}_{i}: &
 (uv,u'v')\in R^{\uparrow}_{i}  \mathrm{\ if\ and\ only\ if\ } u'\in D^{i}_{i+1}(u,v), & 	\mathrm{for\ }i = 0,\ldots,d-2,\\[.1in]
R^{\downarrow}_{j}: &
 (uv,u'v')\in R^{\downarrow}_{j}  \mathrm{\ if\ and\ only\ if\ } u'\in D^{j+1}_{j}(u,v), & 	\mathrm{for\ }j = 0,\ldots, {d-2}.
\end{array}
\end{equation}

 In the next, it is useful to label the edges of the $uv$-diagram in Fig. \ref{diag1} with the relations $R^{\uparrow}_{i}$ and $R^{\downarrow}_{j}$ in the following way. 
 
 Let $(uv,u'v')\in R^{\uparrow}_{i}$. By taking into account the convention we adopted to write the elements in  $E\times E$, if $0\le i\le d-3$, then
$u'\in D^{i}_{i+1}$ and $v'\in D^{i+1}_{i+2}$, so the edge joining
$D^{i}_{i+1}$ and $D^{i+1}_{i+2}$ is labelled with
$R^{\uparrow}_{i}$; if $i=d-2$, then
$u'\in D^{d-2}_{d-1}$ and $v'\in D^{d-1}_{d-2}$, so the vertical edge
joining these two components is labelled with $R^{\uparrow}_{d-2}$. In a very similar way, the edge joining $D^{j+1}_{j}$ and $D^{j+2}_{j+1}$ is labelled with $R^{\downarrow}_{j}$. Thus, we obtain the diagram in Fig. \ref{diag3}.
\begin{figure}[H]
\centering
\tikzstyle{vertexdraw}=[circle, draw=black, inner sep=1.5, minimum size=24.5pt]
\tikzstyle{vertex}=[circle, inner sep=1.5, minimum size=24.5pt]
\tikzstyle{vertexblack}=[circle, draw=black, fill=black, inner sep=0pt,minimum size=5pt]
\begin{tikzpicture}[baseline=-3]
  \draw (0-.5,0.75) node[vertexdraw] (D01) {$\scriptstyle{D^0_1}$};
  \draw (1.5,0.75) node[vertexdraw] (D12)  {$\scriptstyle{\scriptstyle{D^{1}_{2}}}$};
   \draw (3.0+.5,0.75) node[vertex] (dotsA1)  {$\scriptstyle{\cdots}$};
\draw (4.5+1,0.75)  node[vertexdraw] (D34)  {$\scriptstyle{\scriptstyle{D^{d-3}_{d-2}}}$};
\draw (6+1.5,0.75)  node[vertexdraw] (D45)  {$\scriptstyle{\scriptstyle{D^{d-2}_{d-1}}}$};
   \draw (0-.5,-1.25) node[vertexdraw] (D10) {$\scriptstyle{D^1_0}$};   
   \draw (1.5,-1.25) node[vertexdraw] (D21)  {$\scriptstyle{\scriptstyle{D^{2}_{1}}}$};
 \draw (3.0+.5,-1.25) node[vertex] (dotsB1)  {$\scriptstyle{\cdots}$};
\draw (4.5+1,-1.25)  node[vertexdraw] (D43)  {$\scriptstyle{\scriptstyle{D^{d-2}_{d-3}}}$};
\draw (6+1.5,-1.25)  node[vertexdraw] (D54)  {$\scriptstyle{\scriptstyle{D^{d-1}_{d-2}}}$};
\draw (7.5+2.0,-1.25)  node[vertexdraw] (D65)  {$\scriptstyle{\scriptstyle{D^{d}_{d-1}}}$};
   \draw[-] (D01.east) --node[font=\small,above]{$R_0^{\uparrow}$} (D12.west);
   \draw[-] (D12.east) --node[font=\small,above]{$R_1^{\uparrow}$} (dotsA1.west);
   \draw[-] (dotsA1.east) --node[font=\small,above]{$R_{d-4}^{\uparrow}$} (D34.west);
   \draw[-] (D34.east) --node[font=\small,above]{$R_{d-3}^{\uparrow}$} (D45.west);
   \draw[-] (D45.south) --node[font=\small,left]{$R_{d-2}^{\uparrow}$}  (D54.north);
   \draw[-] (D10.east) --node[font=\small,below]{$R^{\downarrow}_0$} (D21.west);
   \draw[-] (D21.east) --node[font=\small,below]{$R^{\downarrow}_1$} (dotsB1.west);
   \draw[-] (dotsB1.east) --node[font=\small,below]{$R^{\downarrow}_{d-4}$} (D43.west);
   \draw[-] (D43.east) --node[font=\small,below]{$R^{\downarrow}_{d-3}$} (D54.west);
   \draw[-] (D54.east) --node[font=\small,below]{$R^{\downarrow}_{d-2}$} (D65.west);       
   \draw[-] (D01.south) --node[font=\small,left]{$R_0$} (D10.north);
   \end{tikzpicture}
   \caption{The labelled $uv$-diagram of $\Gamma$.}\label{diag3}.
\end{figure}

As an example, the labelled $uv$-diagram for an unbalanced distance-biregular graph with diameter $d=6$ and girth deficiency two is shown below:

\begin{figure}[H]
\centering
\tikzstyle{vertexdraw}=[circle, draw=black, inner sep=1.5, minimum size=24.5pt]
\tikzstyle{vertex}=[circle, inner sep=1.5, minimum size=24.5pt]
\tikzstyle{vertexblack}=[circle, draw=black, fill=black, inner sep=0pt,minimum size=5pt]
\begin{tikzpicture}[baseline=-3]
  \draw (0-.5,0.75) node[vertexdraw] (D01) {$\scriptstyle{D^0_1}$};
  \draw (1.5,0.75) node[vertexdraw] (D12)  {$\scriptstyle{\scriptstyle{D^{1}_{2}}}$};
   \draw (3.0+.5,0.75) node[vertexdraw] (D23)  {$\scriptstyle{\scriptstyle{D^{2}_{3}}}$};
\draw (4.5+1,0.75)  node[vertexdraw] (D34)  {$\scriptstyle{\scriptstyle{D^{3}_{4}}}$};
\draw (6+1.5,0.75)  node[vertexdraw] (D45)  {$\scriptstyle{\scriptstyle{D^{4}_{5}}}$};
   \draw (0-.5,-1.25) node[vertexdraw] (D10) {$\scriptstyle{D^1_0}$};   
   \draw (1.5,-1.25) node[vertexdraw] (D21)  {$\scriptstyle{\scriptstyle{D^{2}_{1}}}$};
 \draw (3.0+.5,-1.25) node[vertexdraw] (D32)  {$\scriptstyle{\scriptstyle{D^{3}_{2}}}$};
\draw (4.5+1,-1.25)  node[vertexdraw] (D43)  {$\scriptstyle{\scriptstyle{D^{4}_{3}}}$};
\draw (6+1.5,-1.25)  node[vertexdraw] (D54)  {$\scriptstyle{\scriptstyle{D^{5}_{4}}}$};
\draw (7.5+2.0,-1.25)  node[vertexdraw] (D65)  {$\scriptstyle{\scriptstyle{D^{6}_{5}}}$};
   \draw[-] (D01.east) --node[font=\small,above]{$R_0^{\uparrow}$} (D12.west);
   \draw[-] (D12.east) --node[font=\small,above]{$R_1^{\uparrow}$} (D23.west);
   \draw[-] (D23.east) --node[font=\small,above]{$R_2^{\uparrow}$} (D34.west);
   \draw[-] (D34.east) --node[font=\small,above]{$R_3^{\uparrow}$} (D45.west);
   \draw[-] (D45.south) --node[font=\small,left]{$R_4^{\uparrow}$}  (D54.north);
   \draw[-] (D10.east) --node[font=\small,below]{$R^{\downarrow}_0$} (D21.west);
   \draw[-] (D21.east) --node[font=\small,below]{$R^{\downarrow}_1$} (D32.west);
   \draw[-] (D32.east) --node[font=\small,below]{$R^{\downarrow}_2$} (D43.west);
   \draw[-] (D43.east) --node[font=\small,below]{$R^{\downarrow}_3$} (D54.west);
   \draw[-] (D54.east) --node[font=\small,below]{$R^{\downarrow}_4$} (D65.west);       
   \draw[-] (D01.south) --node[font=\small,left]{$R_0$} (D10.north);
   \end{tikzpicture}
\end{figure}

For each $R^{\uparrow}_{i}$, $i=0,\ldots,d-2$, and
$R^{\downarrow}_{j}$, $j=0,\ldots,d-2$, we set
\begin{align*}
{R^{\uparrow}_{i}}'
  &=\{(wz,uv):(uv,wz)\in R^{\uparrow}_{i}\},\\
{R^{\downarrow}_{j}}'
  &=\{(wz,uv):(uv,wz)\in R^{\downarrow}_{j}\}.
\end{align*}
Here, according to our convention for representing elements of
$E\times E$, $w$ is the endpoint of $wz$ that belongs to $A$.
\begin{lemma} \label{lem_5} 
Let $\Gamma=(V,E)$ be an unbalanced distance-biregular graph with girth deficiency two. Then:
\begin{itemize}
\item[i)] ${R^{\uparrow}_{i}}'=\left\{\begin{array}{ll}
R^{\uparrow}_{i} & \mathrm{if\ } i \mathrm{\ is\ even,}\\[.05in]
R^{\downarrow}_{i} & \mathrm{if\ } i \mathrm{\ is\ odd.}
\end{array}\right.$
 
 \item[ii)] ${R^{\downarrow}_{i}}'=\left\{\begin{array}{ll}
R^{\downarrow}_{i} & \mathrm{if\ } i \mathrm{\ is\ even,}\\[.05in]
R^{\uparrow}_{i} & \mathrm{if\ } i \mathrm{\ is\ odd.}
\end{array}\right.$
  \end{itemize}
\end{lemma}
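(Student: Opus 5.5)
The plan is to translate each relation into a statement about distances between the four endpoints, and then use bipartiteness to see which endpoint of $u'v'$ lies in $A$.

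Take $(uv,u'v')\in R^{\uparrow}_i$, so $u\in A$ and $u'\in D^i_{i+1}(u,v)$. By the convention, $d(u',u)<d(v',u)$, so $v'\in \Gamma_{i+1}(u)$. For $i\le d-3$ the labelled diagram in Fig.~\ref{diag3} places $v'$ in $D^{i+1}_{i+2}$; for $i=d-2$ it places $v'$ in $D^{d-1}_{d-2}$. Since $u\in A$ and $d(u,u')=i$, the vertex $u'$ lies in $A$ exactly when $i$ is even.

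Now compute the transpose pair $(u'v',uv)$, where we rewrite the edge $u'v'$ as $wz$ with $w\in A$.

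\emph{Case $i$ even.} Here $w=u'$ and $z=v'$. The distances are $d(u,w)=i$ and $d(v,w)=i+1$. Also $d(u,z)=i+1$, and $d(v,z)$ equals $i+2$ when $i\le d-3$ and $d-2=i$ when $i=d-2$. Hence $v\in D^{i+1}_{i}(w,z)$ and $u\in \Gamma_i(w)$, which gives $d(u,w)<d(v,w)$, so the convention orders the edge as $uv$. It then remains to check that $u\in D^{i}_{i+1}(w,z)$:
\begin{itemize}
\item for $i\le d-3$, $d(u,z)=i+1$ directly;
\item for $i=d-2$, $d(u,z)=d-1=i+1$ as well.
\end{itemize}
So $(wz,uv)\in R^{\uparrow}_i$, and hence ${R^{\uparrow}_i}'\subseteq R^{\uparrow}_i$.

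\emph{Case $i$ odd.} Here $i\le d-3$ because $d$ is even, and $u'\in B$. So $w=v'$ and $z=u'$, with $d(w,u)=i+1$, $d(w,v)=i+2$, $d(z,u)=i$ and $d(z,v)=i+1$. From $w$'s point of view, $u$ is at distance $i+1$ and $v$ at distance $i+2$. The convention requires $d(u,w)<d(v,w)$, which holds, and $u\in D^{i+1}_{i}(w,z)$. Therefore $(wz,uv)\in R^{\downarrow}_i$.

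Part ii) is handled the same way. For $(uv,u'v')\in R^{\downarrow}_j$ we have $u'\in D^{j+1}_j$, so $d(u,u')=j+1$ and $d(u,v')=j+2$ (when $j\le d-3$), with $d(v,u')=j$ and $d(v,v')$ either $j+1$ or, for $j=d-2$, $d(v,v')=d-1$. The vertex $u'$ lies in $A$ iff $j$ is odd, and then $v'\in B$. Swapping the roles as before:
\begin{itemize}
\item for $j$ even, $w=v'\in A$ and $u\in D^{j+2}_{j+1}(w,z)$ hmm.
\end{itemize}

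The last line signals that the index bookkeeping needs care, so I would recompute it. Write $x$ for the endpoint of $uv$ that is closer to $w$; the convention says $d(x,w)<d(\text{other},w)$. I would then locate $x$ relative to the ordered edge $wz$ in the two cases.

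\emph{Case $j$ even.} The $A$-vertex of $u'v'$ is $v'$, since $d(u,v')=j+2$ is even. The distances from $v'$ are $d(v',u)=j+2$ and $d(v',v)=j+1$, so the closer endpoint is $v$. Using $d(u',v)=j$, this puts $v$ in $D^{j+1}_{j}(v',u')$. The relation is indexed by where the first vertex of the ordered pair lies, so the transpose lies in $R^{\downarrow}_j$.

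\emph{Case $j$ odd.} Now $u'\in A$, with $d(u',v)=j$ and $d(u',u)=j+1$, so the closer endpoint is $v$. The position of $v$ relative to $(u',v')$ is then $\Gamma_j(u')\cap\Gamma_{j+1}(v')=D^{j}_{j+1}$, which gives $R^{\uparrow}_j$.

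The main obstacle is exactly this bookkeeping of which endpoint counts as "$u$" after transposing, together with the boundary index $d-2$. In particular, one must check that the vertical edge $R^{\uparrow}_{d-2}$ stays self-paired: this is where $i=d-2$ is even and the distances collapse via $D^{d-2}_{d-1}$ and $D^{d-1}_{d-2}$. Symmetry of distance does the rest.
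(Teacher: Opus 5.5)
Your argument is correct and is essentially the paper's proof: you use the parity of $d(u,u')$ to decide which endpoint of $u'v'$ lies in $A$, then locate the endpoint of $uv$ nearer to it in the transposed diagram, treating $i=d-2$ separately; you also carry out part ii), which the paper only asserts ``in the same manner''. Some cosmetic fixes are needed: in the even case, $v\in D^{i+1}_{i}(w,z)$ holds only for $i=d-2$ (for even $i\le d-4$ one has $v\in D^{i+1}_{i+2}(w,z)$), which is harmless because only the position of $u$ matters; the abandoned ``hmm'' line should be deleted; and for odd indices you should note that equality, not just inclusion, follows by combining i) and ii).
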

\begin{proof}
Let $(uv,u'v')\in R^{\uparrow}_{i}$, with $i\le d-3$ even.  Then $u'\in A$, with   $u'\in D^{i}_{i+1}(u,v)$ and $v'\in D^{i+1}_{i+2}(u,v)$ by applying the convention we adopted on $E\times E$. This yields that 
${R^{\uparrow}_{i}}'$ consists of the pairs $(u'v',uv)$, with $u\in D^{i}_{i+1}(u',v')$, that is   ${R^{\uparrow}_{i}}'=R^{\uparrow}_{i}$. Let $(uv,u'v')\in R^{\uparrow}_{i}$, with $i$ odd.  Then $v'\in A$, with   $v'\in D^{i+1}_{i+2}(u,v)$ and $u'\in D^{i}_{i+1}(u,v)$ by applying the convention we adopted on $E\times E$. This yields that  ${R^{\uparrow}_{i}}'$ consists of the pairs $(v'u',uv)$, with $u\in D^{i+1}_{i}(v',u')$, that is   ${R^{\uparrow}_{i}}'=R^{\downarrow}_{i}$. If $i=d-2$, then $u'\in A$,
$u'\in D^{d-2}_{d-1}(u,v)$ and
$v'\in D^{d-1}_{d-2}(u,v)$. Hence
$u\in D^{d-2}_{d-1}(u',v')$, and therefore
$(u'v',uv)\in R^{\uparrow}_{d-2}$. In the same manner we can prove that ${R^{\downarrow}_{i}}'=R^{\downarrow}_{i}$ if $i$ is even,  and ${R^{\downarrow}_{i}}'=R^{\uparrow}_{i}$ if $i$ is odd. 
 \end{proof}
Let  $A^\uparrow_{i}$,   $A^\downarrow_{j}$ be the adjacency matrices of the relations  $R^\uparrow_{i}$ and $R^\downarrow_{j}$, respectively. 

\begin{remark}\label{rem_16}
{\em By definition of $R^{\uparrow}_{0}$, the $(uv,u'v')$-entry of $A^{\uparrow}_{0}$ is non-zero if and only if we have $u'=u$ and $v'\neq v$. Similarly, by definition of $R^{\downarrow}_{0}$, the $(uv,u'v')$-entry of $A^{\downarrow}_{0}$ is non-zero if and only if we have  $u'=v$ and $v'\neq u$}.
\end{remark}

In the following, to make the notation easier, we set 
\[
 M=A^\uparrow_{0}, \ \ \ \  N=A^\downarrow_{0}.
\]
In the following, for given matrices $X$ and $Y$,  we write $\underbrace{XYX\ldots }_{\delta}$ to denote a product of $\delta$ matrices alternating between $X$ and $Y$. For example, $\underbrace{XYX\ldots }_{2}=XY$, $\underbrace{XYX\ldots }_{3}=XYX$ and $\underbrace{XYX\ldots }_{4}=XYXY$.

 {The products of the matrices $M$ and $N$ have a significant  combinatorial interpretation, which is described by the following result:}

\begin{lemma}\label{lem_4}
Let $\Gamma=(V,E)$ be a distance-biregular graph, and $(uv,u'v')\in E\times E$, with $u\in A$ and $d(u',u)<d(v',u)$. For any integer $\delta\ge 2$, 
\begin{itemize}
\item[1.] the $(uv,u'v')$-entry in the product  $\underbrace{MNM\cdots }_{\delta}$ is the number of paths of length $\delta +1$ starting with $v,u$ and ending with $u',v'$, namely,  paths of the form $v,u,w_1,\ldots, w_{\delta-2},u',v'$ in $\Gamma$;
\item[2.] the $(uv,u'v')$-entry in the product  $\underbrace{NMN\cdots }_{\delta}$ is the number of paths of length $\delta +1$ starting with $u,v$ and ending with $u',v'$, that is paths of the form $u,v,w_1,\ldots, w_{\delta-2},u',v'$ in $\Gamma$;
\end{itemize}
 here, $\delta$ is the total number of the matrices $M,N$ in the product.
\end{lemma}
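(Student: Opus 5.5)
Both parts go by induction on $\delta$, expanding each matrix product as a sum over intermediate edges and matching the terms with non-stammering walks. Two points matter. First, by Remark \ref{rem_16}, $M$ and $N$ each relate two edges sharing a prescribed vertex, with the two edges distinct. Second, the orientation convention only records which endpoint of the current edge is the ``pivot'' for the next step.

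I would first reformulate $M$ and $N$ in terms of oriented edges. For an edge written as $(x,y)$, with $y$ the ``front'' endpoint, Remark \ref{rem_16} says the following. $M_{(x,y),(x',y')}=1$ iff $x'=x$ and $y'\neq y$. So $M$ pivots at the back vertex: a walk $y,x,y'$ with $y'\ne y$, written as the oriented edge $(x,y')$. $N_{(x,y),(x',y')}=1$ iff $x'=y$ and $y'\neq x$, which is a walk $x,y,y'$ with $y'\neq x$. A word alternating in $M$ and $N$ therefore corresponds to pivoting alternately at the back and front vertex. The non-equality conditions are exactly the non-stammering conditions.

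The base case $\delta=2$ is a direct computation. $(MN)_{uv,u'v'}=\sum_{wz}M_{uv,wz}N_{wz,u'v'}$. The first factor forces $w=u$ and $z\neq v$; the second forces $u'=z$ and $v'\neq u$. So the entry counts walks $v,u,u',v'$ with $u'\ne v$ and $v'\ne u$, i.e.\ non-stammering walks $v,u,u',v'$ of length $3=\delta+1$. The case $NM$ is analogous and gives walks $u,v,u',v'$.

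For the inductive step, write $\underbrace{MNM\cdots}_{\delta+1}=\underbrace{MNM\cdots}_{\delta}X$ with $X\in\{M,N\}$. Expanding gives $\sum_{wz}(\cdots)_{uv,wz}X_{wz,u'v'}$. The hypothesis gives walks $v,u,\ldots,w,z$, and $X$ extends each one by a single vertex without backtracking. So we get walks $v,u,w_1,\ldots,w_{\delta-1},u',v'$, and conversely each such walk arises from exactly one intermediate edge. The main obstacle is bookkeeping the orientation convention ($u\in A$, and $d(u',u)<d(v',u)$ for the column index). This convention fixes which endpoint of $wz$ is the pivot, and I must check that it matches the alternation of $M$ and $N$ in the word, using the structure of the $uv$-diagram.

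I would also note that the statement says ``paths'' but the count is really of non-stammering walks in the sense of Section~\ref{sec_2}. Once $\delta+1\le g-1$, these are genuine paths, since girth $2d-2$ prevents a non-stammering walk that short from revisiting a vertex.
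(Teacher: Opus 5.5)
Your proposal is correct and is essentially the paper's own argument: induction on $\delta$ with the explicit base case $MN$, peeling off the last factor and matching each intermediate edge with a one-step non-stammering extension of the walk. Your observation that the entries really count non-stammering walks, which are paths only while $\delta+1<g$, is a fair sharpening of the statement. The orientation check you defer is settled by bipartiteness, not by the $uv$-diagram or the convention $d(u',u)<d(v',u)$; that convention can even disagree with walk order on the last edge once the walk turns back towards $u$. By Remark~\ref{rem_16}, $M$ glues distinct edges at their common $A$-endpoint and $N$ at their common $B$-endpoint. Since both the word and the vertices of the walk alternate between the two sides, the $i$-th factor always glues at the $i$-th vertex of the walk (counting from $0$), which is the newest one.
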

\begin{proof}
 {Recall the  convention we adopted to represent elements in  $E\times E$: for any given pair $(uv,u'v')$ of edges of $\Gamma=(V,E)$, $u\in A$ (while $u'$ is not necessarily) and $d(u',u)<d(v',u)$.} We  proceed by induction on $\delta$. For $\delta=2$, by definition of the matrices $M$ and $N$,  and taking into account Remark \ref{rem_16}, the $(uv,u'v')$-entry in the product $MN$ is given by
\begin{align*}
(MN)_{(uv,u'v')}&=
\sum_{\substack{u''v''\in E }}{M_{(uv,u''v'')}N_{(u''v'',u'v')}}\\[.03in]
&=\sum_{\substack{ (u''u', u'v')\in R^{\downarrow}_{0} }}{M_{(uv,u''u')}},
\end{align*}
which is the number of the paths of type $v,u,u',v'$.

Let $\delta >2$, and assume that the $(uv,u''v'')$-entry of  {$\underbrace{MNM\cdots }_{\delta-1}$} is the number of paths of type $v,u,w_1,\ldots, w_{\delta-3},u'',v''$ in $\Gamma$. 
\\\indent
For the sake of simplicity assume that $\delta$ is even. The $(uv,u'v')$-entry in the product $\underbrace{MN\cdots N}_\delta$ is given by
\[
({MN\cdots }N)_{(uv,u'v')}=
\sum_{\substack{ (u''u', u'v')\in R^{\downarrow}_{0} }}{({MN\cdots })_{(uv,u''u')}},
\]
which is the number of the paths of type $v,u,w_1,\ldots, w_{\delta-2},u',v'$. 

Similar arguments apply to the case of odd $\delta$ and of the product  {$\underbrace{NMN\cdots}_\delta$}.
\end{proof}

As a corollary we get the following: 
\begin{proposition}\label{rem_11}
If  $\delta\le d-1$, then $\underbrace{MNMN\cdots }_{\delta}$ and $\underbrace{NMNM\cdots }_{\delta}$ are $\{0,1\}$-matrices.
\end{proposition}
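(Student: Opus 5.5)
By Lemma \ref{lem_4}, the $(uv,u'v')$-entry of $\underbrace{MNM\cdots}_{\delta}$ counts the paths $v,u,w_1,\ldots,w_{\delta-2},u',v'$ of length $\delta+1$. The entry of $\underbrace{NMN\cdots}_{\delta}$ counts the paths $u,v,w_1,\ldots,w_{\delta-2},u',v'$. So it suffices to show that, for $\delta\le d-1$, there is at most one such path for any fixed pair of edges. Both cases are handled identically after swapping the roles of $u$ and $v$, so I treat only the first.

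Suppose two distinct paths $P=v,u,w_1,\ldots,w_{\delta-2},u',v'$ and $P'=v,u,w'_1,\ldots,w'_{\delta-2},u',v'$ exist. They share the initial vertex $u$ and the final vertex $u'$. Let $i$ be the first index where they differ, and let $j$ be the first index after $i$ at which $P$ and $P'$ meet again. Such a $j$ exists because both paths end at $u'$. The two subpaths between these meeting points then form a cycle.

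To bound the length of this cycle, note that the common portion from $u$ to $u'$ has length $\delta-1$ on each path. The two subwalks from $u$ to $u'$ are distinct paths, so the cycle length is at most $2(\delta-1)\le 2(d-2)=2d-4$. This is strictly less than the girth $g=2d-2$, which gives a contradiction. A little care is needed to extract a genuine cycle from two distinct paths with the same endpoints. I do this by taking the first divergence point and the first subsequent common vertex (measured along $P$). The two segments between them are internally disjoint, since no vertex of one segment lies on the other before the reconvergence point. Hence they form a cycle of length at most the sum of their lengths, which is at most $2(\delta-1)$.

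Therefore at most one path exists, and every entry is $0$ or $1$. The only step needing attention is the divergence/reconvergence argument that produces an honest cycle. Everything else is the immediate count from Lemma \ref{lem_4} together with the girth assumption $g=2d-2$.
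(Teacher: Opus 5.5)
Your proof is correct and follows the paper's argument. You read the entries as path counts via Lemma \ref{lem_4} and note that two distinct such paths would combine, through their $u$-to-$u'$ portions of length $\delta-1$, into a cycle of length at most $2\delta-2\le 2d-4<2d-2$, contradicting the girth; your divergence/reconvergence step simply makes explicit how to extract a genuine cycle from the closed walk $u,w_1,\ldots,u',w'_{\delta-2},\ldots,w'_1,u$, which the paper asserts without detail.
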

\begin{proof}
Consider the product $\underbrace{MNMN\cdots }_{\delta}$, with $\delta\le d-1$. By Lemma \ref{lem_4}, Part 1,  the $(uv,u'v')$-entry in  {$\underbrace{MNM\cdots }_{\delta}$} is the number of paths  of length $\delta +2$  of the form $v,u,w_1,\ldots, w_{\delta-2},u',v'$ in $\Gamma$. 
\\
Let $v,u,w_1,\ldots,w_{\delta-2},u',v'$ and $v,u,w'_1,\ldots,w'_{\delta-2},u',v'$ be two distinct paths of the given form. Then  the walk $u,w_1,\ldots,w_{\delta-2},u',w'_{\delta-2},\ldots,w'_1,u$ contains a cycle of length at most $2\delta-2\le 2d-4$, which is against the assumption that $\Gamma$ has  girth  $2d-2$. Very similar arguments apply to the product $\underbrace{NMN\cdots }_{\delta}$, with  $\delta\le d-1$.  
\end{proof}

 Let $\cM_\Gamma$ be the subset of $\Mat_E(\bC)$ consisting of  {the} identity matrix $I$ and the adjacency  matrices $A^\uparrow_{i}$ and   $A^\downarrow_{i}$, for $i= 0,\ldots,d-2$; we recall that $\Mat_E(\bC)$ is the standard  {matrix algebra over} $\bC$ whose rows  {and} columns are indexed by the elements of $E$. 
\begin{proposition}\label{prop_4}
Let $\Gamma=(V,E)$ be an unbalanced distance-biregular graph with diameter $d$, girth deficiency two and partition classes $A$ and $B$. Then, as an algebra, $\Span_{\bC}(\cM_\Gamma)$ is generated by $I$, $M$ and $N$. 
\end{proposition}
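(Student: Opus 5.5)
Write $W_\delta=\underbrace{MNM\cdots}_{\delta}$ and $W'_\delta=\underbrace{NMN\cdots}_{\delta}$. The plan is to express every matrix of $\cM_\Gamma$ as a polynomial in $M$ and $N$, and then to show that $\Span_{\bC}(\cM_\Gamma)$ is closed under multiplication by $M$ and $N$. The two facts together give the statement. They show that the subalgebra generated by $I,M,N$ lies inside the span. Conversely, the span lies inside that subalgebra, since the span contains $I,M,N$ and is closed under products.

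For the first part, I would use Lemma \ref{lem_4} and Proposition \ref{rem_11}. For $\delta\le d-1$ the matrix $W_\delta$ is a $\{0,1\}$-matrix. By Lemma \ref{lem_4}, its $(uv,u'v')$-entry is $1$ exactly when there is a path $v,u,w_1,\ldots,w_{\delta-2},u',v'$. Such a path has length $\delta+1\le d$, and girth $2d-2$ makes the shortest paths unique. So this condition should say precisely that $u'\in D^{\delta-1}_{\delta}(u,v)$ and $v'\in D^{\delta}_{\delta+1}(u,v)$, which means $(uv,u'v')\in R^\uparrow_{\delta-1}$ for $\delta-1\le d-3$. In the same way $W'_\delta$ gives $A^\downarrow_{\delta-1}$, possibly after swapping roles according to the parity conventions of Lemma \ref{lem_5}. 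This yields $A^\uparrow_i=W_{i+1}$ and $A^\downarrow_i=W'_{i+1}$ for $0\le i\le d-3$. The remaining matrices are $A^\uparrow_{d-2}$, corresponding to the vertical edge between $D^{d-2}_{d-1}$ and $D^{d-1}_{d-2}$, and $A^\downarrow_{d-2}$, corresponding to the edge into $D^d_{d-1}$. For these I would compute $W_d$, or $W_{d-1}$ multiplied by $N$ or $M$, using the $uv$-diagram of Fig.~\ref{diag1} and the edge patterns. Extending a path from $D^{d-3}_{d-2}\to D^{d-2}_{d-1}$ by one more edge lands either in $D^{d-1}_{d-2}$ via the vertical edge, which gives $R^\uparrow_{d-2}$, or, from the $B$-side component, into $D^{d}_{d-1}$. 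The other vertices at distance $d-2$ from $u$ need care, since $D^{d-1}_{d-2}$ has pattern $(k,l-k,1)$. Counting paths through these patterns, I expect $W'_{d}$ and $W_d$ to equal $A^\downarrow_{d-2}$, $A^\uparrow_{d-2}$ plus, at worst, integer combinations of lower $A^\uparrow_i,A^\downarrow_i$ and $I$. Solving this triangular system then expresses all of $\cM_\Gamma$ as polynomials in $M,N$.

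For closure, it suffices to compute $M\cdot X$ and $N\cdot X$ for $X\in\cM_\Gamma$. For the middle indices this is immediate from the identifications above. $M A^\downarrow_i=MW'_{i+1}=W_{i+2}$, while $M A^\uparrow_i=M\cdot MW'_i$. Here $M^2=(k-1)M+kI$, because $M$ is the adjacency matrix of a disjoint union of cliques $K_{k+1}$ (edges sharing the $A$-endpoint), and similarly $N^2=(l-1)N+lI$. Hence $M\cdot MW'_i=(k-1)W_{i+1}+kW'_i$, which lies in the span; the same goes for $N$. So only the top cases remain, namely products $W_d$, $W'_d$ and products involving $A^\uparrow_{d-2}$, $A^\downarrow_{d-2}$.

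The main obstacle is these boundary products. There the girth bound fails, because two walks of length about $d$ between two edges can close a cycle of length $2d-2$. So the entries are no longer $0/1$ and must be counted from the diagram. For each pair $(uv,u'v')$ in a given relation, I would count the neighbours $u''v''$ of $u'v'$ in the relation defining $M$ or $N$ for which $(uv,u''v'')$ lies in the given top relation. The edge patterns in the proof of the preceding proposition, namely $(k,0,1)$ on $D^{d-2}_{d-1}$, $(k,l-k,1)$ on $D^{d-1}_{d-2}$ and $(0,0,k+1)$ on $D^d_{d-1}$, show that this count depends only on the relation of $(uv,u'v')$. That gives explicit coefficients (involving $k$, $l-k$, $k+1$) and closes the span under products.
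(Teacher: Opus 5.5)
Your two-part plan (write each matrix of $\cM_\Gamma$ as a word in $M,N$, then show the span is closed under multiplication) is sound. However, the first part has a genuine gap at the top two relations.

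\textbf{The gap.} You stop the identification at $\delta\le d-2$ and propose to recover $A^\uparrow_{d-2}$ and $A^\downarrow_{d-2}$ from the length-$d$ words by a ``triangular system''. That system is singular. Counting with exactly the edge patterns you quote gives
$W_d=W_{d-1}N=(k-1)A^\uparrow_{d-2}+kA^\downarrow_{d-2}+kA^\downarrow_{d-3}$ and
$W'_d=W'_{d-1}M=(k-1)A^\uparrow_{d-2}+kA^\downarrow_{d-2}+kA^\uparrow_{d-3}$.
You can check both against the three $1$-dimensional characters. So the two length-$d$ words have the same top component. Non-alternating words of length $d$ reduce via $M^2$ and $N^2$ to shorter ones. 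Length $d$ therefore cannot separate $A^\uparrow_{d-2}$ from $A^\downarrow_{d-2}$.

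\textbf{The fix.} Proposition~\ref{rem_11} still applies at $\delta=d-1$, and this is what you missed.
\begin{itemize}
\item It gives $W_{d-1}=A^\uparrow_{d-2}$ directly.
\item For $W'_{d-1}$, take a path $u,v,w_1,\ldots,w_{d-3},u',v'$. Here $u'\in D^{d-1}_{d-2}$, whose edge pattern is $(k,l-k,1)$. So $v'$ lies either in $D^{d-2}_{d-1}$ (relation $R^\uparrow_{d-2}$) or in $D^{d}_{d-1}$ (relation $R^\downarrow_{d-2}$). Hence $W'_{d-1}=A^\uparrow_{d-2}+A^\downarrow_{d-2}$.
\item Therefore $A^\downarrow_{d-2}=W'_{d-1}-W_{d-1}$, exactly as in Eq.~\eqref{eq_9}.
\end{itemize}
Your sentence about extending a path from $D^{d-3}_{d-2}\to D^{d-2}_{d-1}$ by one more edge actually describes these length-$(d-1)$ words. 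You had the right picture but attached it to the wrong products. Your own counting tool also gives the result at once: $A^\uparrow_{d-3}M=A^\uparrow_{d-2}$ and $A^\downarrow_{d-3}N=A^\uparrow_{d-2}+A^\downarrow_{d-2}$.

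\textbf{The closure half.} This part is correct and takes a different route from the paper.
\begin{itemize}
\item You compute structure constants directly. For fixed $uv$, $(A_sM)_{(uv,u'v')}$ counts the edges $e\neq u'v'$ through the $A$-endpoint of $u'v'$ with $(uv,e)\in s$. By Lemma~\ref{lem_20}, this count depends only on the cells of the two endpoints of $u'v'$, hence only on the relation of $(uv,u'v')$. The same holds for $N$, using the $B$-endpoint.
\item This works uniformly for every relation, so your separate middle-index computation is unnecessary.
\item It produces right products $A_sM$ and $A_sN$, while your plan is phrased with left products. Either work with right products throughout, or transpose using Lemma~\ref{lem_5}. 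Since $I$ lies in the span, right-closure already puts every word in the span.
\end{itemize}
The paper instead proves only the single identity $MD=DM=kD$ for $D=A^\downarrow_{d-2}$. Combined with $M^2=(k-1)M+kI$, it derives $W_d=kW'_{d-1}-W_{d-1}+kW'_{d-2}$ and its transpose, so every word of length $\ge d$ collapses to shorter ones. Your route yields the full multiplication table. The paper's route yields the short presentation of Remark~\ref{rem_15}, which is what Section~\ref{sec_4} is built on.
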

\begin{proof}
We claim that $M^2=(k-1)M+kI$. Indeed, for each $(uv,u'v')\in E\times E$ the $(uv,u'v')$-entry of $M^2$ corresponds to the number of edges $u''v''$ such that $(uv, u''v'')\in R^{\uparrow}_{0}$ and $(u''v'', u'v')\in R^{\uparrow}_{0}$. By the definition of $R^{\uparrow}_{0}$ and taking into account Remark \ref{rem_16}, if this entry is non-zero we have $u=u''=u'$ and $v\neq v''\neq v'$. There are two cases to consider: $v=v'$ and $v\neq v'$. If $v=v'$ then  $uv$ coincides with $u'v'$. Since $u$ has $k+1$ neighbors, then there are $k$ possible choices for $v''$. If $v\neq v'$ then $(uv, u'v')\in R^{\uparrow}_{0}$ and there are $k-1$ possible choices for $v''$. Similarly,  for each $(uv,u'v')\in E\times E$ the $(uv,u'v')$-entry of $N^2$ corresponds to the number of edges $u''v''$ such that $(uv, u''v'')\in R^{\downarrow}_{0}$ and $(v''u'', u'v')\in R^{\downarrow}_{0}$ (note that $v''\in A$). By the definition of $R^{\downarrow}_{0}$, if this entry is non-zero we have $v=u''=u'$ and $u\neq v''\neq v'$. There are two cases to consider: $u=u'$ and $u\neq u'$. If $u=u'$ then $uv$ coincides with $u'v'$. Since $v$ has $l+1$ neighbors, then there are $l$ possible choices for $v''$. If $u\neq u'$ then $(uv, u'v')\in R^{\downarrow}_{0}$ and there are $l-1$ possible choices for $v''$.

This yields $N^2=(l-1)N+lI$. Therefore, the algebra generated by $M$ and $N$ is spanned by the products $MNMN\cdots$ and $NMNM\cdots$.

According to Proposition \ref{rem_11}, the product $\underbrace{MN\cdots}_{\delta}$ is a $(0,1)$-matrix, for $1\le\delta\le d-1$. Consider $(uv,u'v')\in E\times E$ such that the $(uv,u'v')$-entry of $\underbrace{MN\cdots}_{\delta}$ is non-zero, that is, by Lemma \ref{lem_4}, there is a path of type $v,u,w_1,\ldots, w_{\delta-2},u',v'$. Using Proposition \ref{rem_11}, we see that $u,w_1,\ldots, w_{\delta-2},u'$ is the unique path between $u$ and $u'$, and this shows that $u'\in D^{\delta-1}_\delta$. Looking at the $uv$-diagram of Fig. \ref{diag3}, we get $(uv,u'v')\in R^\uparrow_{\delta-1}$. Similarly, if the $(uv,u'v')$-entry of $\underbrace{NM\cdots}_{\delta}$ is non-zero, we can show that $u'\in D^\delta_{\delta-1}$.  By taking into account the $uv$-diagram in Fig. \ref{diag3}, we see that $v'\in D^{\delta+1}_\delta$ whenever $\delta\leq d-2$. If $\delta=d-1$, we can have either $v'\in D^{d-2}_{d-1}$ or $v'\in D^d_{d-1}$. Therefore we have that
\begin{equation}\label{eq_9}
\begin{array}{ll}
\underbrace{MN\cdots }_{\delta}=A^\uparrow_{\delta-1}, & \mathrm{for\ } \delta=1,\ldots,d-1,\\[.1in]
\underbrace{NM\cdots}_{\delta}=A^\downarrow_{\delta-1},& \mathrm{for\ } \delta=1,\ldots,d-2, \\[.1in]
\underbrace{NM\cdots N}_{d-1}=A^\uparrow_{d-2}+A^\downarrow_{d-2}.
\end{array}
\end{equation}

To simplify notation, we set $D=A^\downarrow_{d-2}$. Then, from Eq. \eqref{eq_9} we get 
\begin{equation}\label{eq_34}
	D=\underbrace{NM\cdots MN}_{d-1}-\underbrace{MN\cdots NM}_{d-1}.	
\end{equation}

We claim that $MD=DM=kD$. Let $uv,u'v'\in E$ such that the $(uv,u'v')$-entry of $MD$ is non-zero. Then, there exists  $uv''\in E$ such that $(uv,uv'')\in R^\uparrow_{0}$  and $(uv'',u'v')\in R^\downarrow_{d-2}$.  As $d(u',u)=d-1$ and $d(v',u)=d$,  by taking into account the $uv$-diagram in Fig. \ref{diag3} we see that $(uv,u'v')\in R^\downarrow_{d-2}$. Moreover, $v''$ can be chosen in $k$ different ways, one for each neighbor of $u$ different from $v$. So, we get $(MD)=kD$.
In addition, Lemma \ref{lem_5} implies  $DM=D^\top M^\top =(MD)^\top =kD^\top =kD$.

 {Recalling that $M^2=(k-1)M+kI$, from Equation \eqref{eq_34} we get
\begin{align*}
MD
&=\underbrace{MN\cdots MN}_{d}
  -M^2\underbrace{N\cdots NM}_{d-2}\\
&=\underbrace{MN\cdots MN}_{d}
  -(k-1)\underbrace{MN\cdots NM}_{d-1}\\
&\qquad-k\underbrace{N\cdots NM}_{d-2}.
\end{align*}

On the other hand, we just proved that $MD=kD$, so
\[ MD = kD = k\underbrace{NM\cdots MN}_{d-1}-k\underbrace{MN\cdots NM}_{d-1}. \]
By comparing the above expression for $MD$, we get}
\[
\underbrace{MN\cdots MN}_{d}= k\underbrace{NM\cdots MN}_{d-1} - \underbrace{MN\cdots NM}_{d-1} + k\underbrace{NM\cdots NM}_{d-2},
\] 
and, taking the transpose, we obtain
\[
\underbrace{NM\cdots NM}_{d}= k\underbrace{NM\cdots MN}_{d-1} - \underbrace{MN\cdots NM}_{d-1} + k\underbrace{MN\cdots MN}_{d-2}.
\] 
Therefore, every time we have a product of $M$ and $N$ with  at least $d$ factors, we can express it as a linear combination of products with strictly fewer factors. By Eq. \eqref{eq_9}, the proof is complete.
\end{proof}
\begin{corollary}\label{rem_3a}
Let $\Gamma=(V,E)$ be an unbalanced distance-biregular graph with diameter $d=2r+2$ and girth deficiency two. Then the adjacency matrices of the relations \eqref{eq_11} are as follows:
\[
\begin{array}{ll}
\begin{array}{rcl}
A^\uparrow_{2i-1}& = & (MN)^{i}, \\[.05in]
A^\uparrow_{2i}  & = &    (MN)^{i}M, 
\end{array}
& \mathrm{for\ }  i=1,\ldots,r, \\[.25in]
\begin{array}{rcl}
A^\downarrow_{2i-1}& = & (NM)^{i},\\[.05in]
A^\downarrow_{2i}  & = &    (NM)^{i}N,
\end{array}
& \mathrm{for\ } i=1,\ldots,r-1,\\[.25in]
\begin{array}{lcl}
A^\downarrow_{d-3} & = & (NM)^r, \\[.05in]
A^\downarrow_{d-2}  & = & (NM)^rN- (MN)^rM.
\end{array}
\end{array}
\]
\end{corollary}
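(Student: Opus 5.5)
The plan is to read Corollary~\ref{rem_3a} off from Eq.~\eqref{eq_9} in the proof of Proposition~\ref{prop_4}. That equation already expresses every adjacency matrix $A^\uparrow_{\delta-1}$ and $A^\downarrow_{\delta-1}$ as an alternating product of $M$ and $N$ with $\delta$ factors. All that remains is to rewrite each alternating product as a power of $MN$ or $NM$, possibly multiplied by one extra factor, and to track the parity of $\delta$.

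The first step handles the $\uparrow$ matrices. From Eq.~\eqref{eq_9}, $A^\uparrow_{\delta-1}=\underbrace{MN\cdots}_{\delta}$ for $\delta=1,\ldots,d-1=2r+1$. If $\delta=2i$ is even, the product consists of $i$ blocks $MN$, so $A^\uparrow_{2i-1}=(MN)^i$ for $i=1,\ldots,r$. If $\delta=2i+1$ is odd, the product is $(MN)^iM$, giving $A^\uparrow_{2i}=(MN)^iM$ for $i=0,\ldots,r$. This includes $A^\uparrow_{d-2}=(MN)^rM$ and, for $i=0$, $A^\uparrow_0=M$.

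The second step handles the $\downarrow$ matrices in the same way. From the second line of Eq.~\eqref{eq_9}, $A^\downarrow_{\delta-1}=\underbrace{NM\cdots}_{\delta}$ for $\delta=1,\ldots,d-2=2r$. Even $\delta=2i$ gives $A^\downarrow_{2i-1}=(NM)^i$ for $i=1,\ldots,r$. The top value $i=r$ is the stated formula $A^\downarrow_{d-3}=(NM)^r$. Odd $\delta=2i+1\le 2r-1$ gives $A^\downarrow_{2i}=(NM)^iN$ for $i=0,\ldots,r-1$.

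The last case is $D=A^\downarrow_{d-2}$, which comes from Eq.~\eqref{eq_34}. There $D=\underbrace{NM\cdots N}_{d-1}-\underbrace{MN\cdots M}_{d-1}$. Since $d-1=2r+1$ is odd, the first product is $(NM)^rN$ and the second is $(MN)^rM$. This gives $A^\downarrow_{d-2}=(NM)^rN-(MN)^rM$.

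No step is a genuine obstacle. The only care needed is the index bookkeeping at the top of the ranges. First, the $\downarrow$ formula from Eq.~\eqref{eq_9} holds only up to $\delta=d-2$, so $A^\downarrow_{d-2}$ must be taken from Eq.~\eqref{eq_34} rather than from the generic formula. Second, the ranges must match the statement: $i\le r$ for the $\uparrow$ matrices, $i\le r-1$ for the generic $\downarrow$ matrices, and the case $d-3=2r-1$ listed separately.
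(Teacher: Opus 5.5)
Your proposal is correct and follows essentially the same route as the paper, which obtains these identities from Eq.~\eqref{eq_9} and Eq.~\eqref{eq_34} in the proof of Proposition~\ref{prop_4} by rewriting the alternating products as powers of $MN$ and $NM$. Your explicit parity and range bookkeeping, in particular taking $A^\downarrow_{d-2}$ from Eq.~\eqref{eq_34} rather than from the generic formula, spells out exactly what the paper's one-line proof leaves implicit.
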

\begin{proof}
The equalities in the statement are obtained from the proof of Proposition \ref{prop_4} by writing the adjacency matrices of the relations \eqref{eq_11} in terms of the matrices $M=A^\uparrow_{0}$ and $N=A^\downarrow_{0}$. 
\end{proof}
\begin{remark}\label{rem_3}{\em 
 By using the equalities given in Corollary \ref{rem_3a}, the valency $\eta_s$ of the relation $s$ is easily computed via the formula  $\eta_s w=A_sw$, where $A_s$ is the adjacency matrix of $s$ and $w$ is the all-ones vector.
 }
\end{remark}
For any unbalanced distance-biregular graph $\Gamma=(V,E)$ with diameter $d$, girth deficiency two and vertex degrees $k+1$ and $l+1$, we will denote the subalgebra $\Span_\bC(\cM_\Gamma)$ by $\cA_{d}(k,l)$.
\begin{corollary}\label{cor_7}
 {Let $\Gamma=(V,E)$ be an unbalanced distance-biregular graph with diameter $d=2r+2$ and girth deficiency two. Then, the algebra }$\cA_{d}(k,l)$ is an adjacency algebra of dimension $2d-1$. Therefore, $\cX=( E,\cR)$, where $\cR$ is the set of the relations described in \eqref{eq_11}, is an association scheme.
\end{corollary}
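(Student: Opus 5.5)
The plan is to verify the three conditions of Theorem \ref{th_2} for the family $\cM_\Gamma=\{I\}\cup\{A^\uparrow_i,A^\downarrow_i: 0\le i\le d-2\}$, which has $1+2(d-1)=2d-1$ members. Once those conditions hold, the dimension count follows because distinct relations have disjoint supports, so the matrices in $\cM_\Gamma$ are linearly independent. We also need each of them to be non-zero; every component $D^i_{i+1}$ and $D^{j+1}_j$ of the $uv$-diagram in Fig. \ref{diag1} is non-empty, since the edge patterns computed there have positive $b$-values, so each relation contains at least one pair.

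For condition i), we show that the relations \eqref{eq_11} together with the diagonal partition $E\times E$. Fix $uv$ with $u\in A$ and take any edge $u'v'\neq uv$, oriented by our convention so that $d(u',u)<d(v',u)$. Because $\Gamma$ is bipartite, $u'$ lies in exactly one component of the $uv$-family. By Fig. \ref{diag1}, the components of the form $D^i_{i+1}$ are those with $0\le i\le d-2$, and those of the form $D^{j+1}_j$ are those with $0\le j\le d-1$. If $u'\in D^d_{d-1}$, then $v'$ would have to be at distance $d+1$ from $u$, which is impossible. Hence $u'$ lies in exactly one of the sets indexing $R^\uparrow_i$ or $R^\downarrow_j$ with $i,j\le d-2$. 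The only point needing care is $u'=u$ (with $v'\neq v$) or $u'=v$, which places the pair in $R^\uparrow_0$ or $R^\downarrow_0$ respectively. With that checked, $I+\sum_i A^\uparrow_i+\sum_j A^\downarrow_j=J$.

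Condition ii) is immediate from Lemma \ref{lem_5}, which gives $(A^\uparrow_i)^\top\in\{A^\uparrow_i,A^\downarrow_i\}$ and likewise for $A^\downarrow_i$; it also gives $I^\top=I$.

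Condition iii) is the step I expect to be the main obstacle, and Proposition \ref{prop_4} essentially supplies it. That proposition shows the algebra generated by $I,M,N$ equals $\Span_\bC(\cM_\Gamma)$; more precisely, every alternating word in $M,N$ reduces, using $M^2=(k-1)M+kI$, $N^2=(l-1)N+lI$ and the two length-$d$ reduction identities, to a combination of words of length at most $d-1$. By Eq. \eqref{eq_9} and Corollary \ref{rem_3a}, every such word lies in $\Span_\bC(\cM_\Gamma)$. In particular, each basis matrix $A_i$ is a polynomial in $M,N$, so any product $A_iA_j$ lies in the span and hence equals $\sum_k p^k_{ij}A_k$ for some scalars $p^k_{ij}$. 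For Theorem \ref{th_2} as stated, this closure under multiplication suffices. The structure constants are automatically non-negative integers, being entries of products of $\{0,1\}$-matrices, and they are constant on each relation because the $A_k$ have disjoint supports.

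The one subtle point is ensuring that the span is exactly the algebra and not larger, in particular that the word $NM\cdots N$ of length $d-1$ splits as $A^\uparrow_{d-2}+A^\downarrow_{d-2}$. This is the case where I would recheck against Fig. \ref{diag3} that no further relation appears. Applying Theorem \ref{th_2} then shows that $\cA_d(k,l)$ is the adjacency algebra of the association scheme $(E,\cR)$, and that its dimension is $2d-1$.
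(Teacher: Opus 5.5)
Your proposal is correct and follows the paper's own route. The paper likewise invokes Theorem \ref{th_2}, using Lemma \ref{lem_5} for condition ii) and Proposition \ref{prop_4} for closure under multiplication, and it reads off the dimension $2d-1$ from the fact that the matrices of Corollary \ref{rem_3a} form a basis. You go slightly further by checking explicitly that the relations partition $E\times E$ and are non-empty, which the paper leaves implicit in the $uv$-diagram. One small correction there: vertices of $D^{d-2}_{d-1}$ have $b=0$, so the non-emptiness of $R^\uparrow_{d-2}$ comes from their $a$-value $k>0$, not from a positive $b$-value.
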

\begin{proof}
Since the matrices described in  {Corollary} \ref{rem_3a} form a linear basis for $\cA_{d}(k,l)$,  the dimension of $\cA_{d}(k,l)$ is $2d-1$. 
Theorem \ref{th_2} together with Lemma \ref{lem_5} and Proposition \ref{prop_4} imply that $\cA_{d}(k,l)$ is the adjacency algebra of the scheme $\cX=( E,\cR)$.
\end{proof}
{
\begin{remark}\label{rem_15}
{\em Let $\Gamma=(V,E)$ be an  unbalanced distance-biregular graph with diameter $d=2r+2$ and girth deficiency two, and $\cA_{d}(k,l)$ its adjacency algebra. In terms of the language of algebra, $\cA_{d}(k,l)$ can be viewed as the algebra generated by elements $M$ and $N$, and defined by  the following  identities:
\begin{equation} \label{eq_16}
M^2=(k-1)M+kI,
\end{equation}
\begin{equation} \label{eq_17}
N^2=(l-1)N+lI,
\end{equation}
\begin{equation} \label{eq_18}
MD=kD=DM,
\end{equation}
where $D=(NM)^rN-(MN)^rM$. } 
\end{remark}
}
\section{The irreducible representations of $\cA_{d}(k,l)$}\label{sec_4}

Let $\Gamma=(V,E)$ be an unbalanced distance-biregular graph with diameter $d=2r+2$ and girth deficiency two, and let $\cA_{d}(k,l)$ denote its adjacency algebra. In this section, we compute the irreducible representations of $\cA_{d}(k,l)$, considering this algebra generated by $M$ and $N$ as described in Remark \ref{rem_15}. This implies that, for any vector space $U$ and homomorphism
$\rho:\cA_{d}(k,l)\rightarrow\End(U)$, the pair $(U,\rho)$ is a
representation of $\cA_{d}(k,l)$ if and only if $\rho(M)$ and
$\rho(N)$ satisfy Eqs. \eqref{eq_16}, \eqref{eq_17} and
\eqref{eq_18}.
 We recall that $(U,\rho)$ is an irreducible representation of $\cA_{d}(k,l)$  if $U$  does not contain non-trivial submodules. {In the following, we  refer to the equations \eqref{eq_16}, \eqref{eq_17} and \eqref{eq_18}  as the {\em defining equations} of the algebra $\cA_{d}(k,l)$. We will consider them as matrix equations even though the matrices to which we apply them will not necessarily be $M$ and $N$.}

\subsection{The 1-dimensional irreducible representations of $\cA_{d}(k,l)$}

\begin{proposition}\label{prop_3}
Up to isomorphism, $\cA_{d}(k,l)$ has exactly three 1-dimensional representations, denoted by $U_{1,1}$, $U_{1,2}$ and $U_{1,3}$. Their values are summarized in Table \ref{tab_2}, where $U_{1,1}$ is the principal representation and $D=(NM)^rN-(MN)^rM$:  
{\renewcommand\arraystretch{1.3}
\begin{center}
\captionof{table}{The 1-dimensional representations of $\cA_{d}(k,l)$}
\label{tab1}
\begin{tabular}{c|c|c|c|c|c}
\hline
& \parbox[c][.4in][c]{0.8in}{\centering
  \fontsize{11pt}{0pt}\selectfont{$(MN)^i$}\\[.04in]
  \fontsize{8pt}{0pt}\selectfont{$i=0,\ldots,r$}}
& \parbox[c][.4in][c]{.8in}{\centering
  \fontsize{11pt}{0pt}\selectfont{$(NM)^i$}\\[.04in]
  \fontsize{8pt}{0pt}\selectfont{$i=1,\ldots,r$}}
& \parbox[c][.6in][c]{.8in}{\centering
  \fontsize{11pt}{0pt}\selectfont{$(MN)^iM$}\\[.04in]
  \fontsize{8pt}{0pt}\selectfont{for $i=0,\ldots,r$}}
& \parbox[c][.6in][c]{.95in}{\centering
  \fontsize{11pt}{0pt}\selectfont{$(NM)^iN$}\\[.04in]
  \fontsize{8pt}{0pt}\selectfont{$i=0,\ldots,r-1$}}
& \parbox[c][.6in][c]{.5in}{\centering
  \fontsize{11pt}{0pt}\selectfont{$D$}}
\\[.04in]
\hline
$U_{1,1}$ &
$k^il^i$ &
$k^il^i$ &
$k^{i+1}l^i$ &
$k^il^{i+1}$ &
$k^rl^r(l-k)$
\\[.08in]
$U_{1,2}$ &
$1$ &
$1$ &
$-1$ &
$-1$ &
$0$
\\[.08in]
$U_{1,3}$ &
$(-1)^ik^i$ &
$(-1)^ik^i$ &
$(-1)^ik^{i+1}$ &
$(-1)^{i+1}k^i$ &
$(-1)^{r+1}k^r(k+1)$
\\
\hline
\end{tabular}
\label{tab_2}
\end{center}
}
\end{proposition}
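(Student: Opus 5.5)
A 1-dimensional representation is determined by the two scalars $m=\rho(M)$ and $n=\rho(N)$. By Remark \ref{rem_15}, the pair $(m,n)$ gives a representation if and only if it satisfies the defining equations \eqref{eq_16}, \eqref{eq_17} and \eqref{eq_18}. So the plan is to solve this system of scalar equations and then evaluate the basis elements of Corollary \ref{rem_3a} on each solution. Since $\End(U)\cong\bC$ is commutative, any two 1-dimensional representations with different $(m,n)$ are non-isomorphic.

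First I handle the quadratic equations. Equation \eqref{eq_16} reads $m^2-(k-1)m-k=(m-k)(m+1)=0$, so $m\in\{k,-1\}$. Likewise \eqref{eq_17} factors as $(n-l)(n+1)=0$, so $n\in\{l,-1\}$. This leaves four candidate pairs.

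Next I impose \eqref{eq_18}. In the commutative setting it becomes $(m-k)\,\rho(D)=0$ with
\[
\rho(D)=(nm)^r n-(mn)^r m=(mn)^r(n-m).
\]
The four candidates behave as follows.
\begin{itemize}
\item[(a)] $m=k$: the equation holds automatically. With $n=l$ this gives the principal representation $U_{1,1}$, where $\rho(D)=k^rl^r(l-k)$, consistent with the valencies from Remark \ref{rem_3}. With $n=-1$ this gives $U_{1,3}$, where $\rho(D)=(-k)^r(-1-k)=(-1)^{r+1}k^r(k+1)$.
\item[(b)] $m=-1$: since $k\ge 2$ in the thick case (and in any case $k\ge1$), we have $m-k=-(k+1)\neq 0$, so we need $\rho(D)=0$. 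For $n=-1$ this holds, because $n-m=0$; this gives $U_{1,2}$ with $\rho(D)=0$. For $n=l$ we get $\rho(D)=(-l)^r(l+1)\neq 0$, since $l\ge 1$. So this case is excluded.
\end{itemize}

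It remains to fill in the table by substituting each pair into $(mn)^i$, $(nm)^i$, $(mn)^im$ and $(nm)^in$. For instance, in $U_{1,3}$ we get $(mn)^i=(-k)^i$, $(mn)^im=(-1)^ik^{i+1}$ and $(nm)^in=(-1)^{i+1}k^i$. This is routine.

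The only point needing care is justifying that the defining equations characterize representations, which is exactly Remark \ref{rem_15} (the algebra is presented by $M$, $N$ subject to these relations). I take that remark as given. Beyond it, the only hard step is ruling out the pair $(m,n)=(-1,l)$, which uses that $k+1$ and $l$ are nonzero.
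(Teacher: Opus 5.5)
Your proof is correct and follows essentially the same argument as the paper. The quadratic relations force $\rho(M)\in\{k,-1\}$ and $\rho(N)\in\{l,-1\}$, and \eqref{eq_18}, in the form $(\rho(M)-k)\rho(D)=0$, excludes exactly the pair $(-1,l)$. Your explicit formula $\rho(D)=(mn)^r(n-m)$, the entry-by-entry check of the table, and the remark that distinct scalar pairs give non-isomorphic representations fill in details the paper leaves implicit, but do not change the approach.
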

\begin{proof}
Let $(\bC,\rho)$ be a 1-dimensional representation of $\cA_{d}(k,l)$.  From Eq.\eqref{eq_16}, it follows that $\rho(M)^2=\rho(M^2)=(k-1)\rho(M)+k$. Therefore, $\rho(M)\in\{-1,k\}$.
Similarly, Eq.\eqref{eq_17} yields $\rho(N)\in\{-1,l\}$.

From Eq.\eqref{eq_18}, we have either  $\rho(D)=0$ or $\rho(M)=k$. This excludes the case $\rho(M)=-1$ and $\rho(N)=l$.
Therefore, we have exactly three 1-dimensional representations of $\cA_d(k,l)$.  The principal representation is the one with $\rho(M)=k$ and $\rho(N)=l$.
\end{proof}

\begin{remark}
 {
{\em 
Let $\Gamma=(V,E)$ be an unbalanced distance-biregular graph with diameter $d=2r+2$ and girth deficiency two. By Remark \ref{rem_3}, the valency of the relations $R^\uparrow_i$ and $R^\downarrow_i$, for $i=0,\ldots, d-2$, is computed  via the formula  $\eta_s w=A_sw$, where $A_s$ is the adjacency matrix of the relation $s$ and $w$ is the all-ones vector. Since $U=\<w\>$ is the principal representation of $\cA_{d}(k,l)$, whose associated homomorphism is $\rho(A_s)=\eta_s$ for $s\in\cR$, we may use the values of the principal character reported in Table \ref{tab1} to get the valency of each relation. In addition we have 
\begin{align*}
|E| & = \sum_{s\in \cR}\eta_s = 1+2\sum_{i=1}^{r}{k^il^i} +\sum_{i=0}^{r}{k^{i+1}l^i}+\sum_{i=0}^{r-1}{k^{i}l^{i+1}}+k^rl^r(l-k)\\[0.08in]
& =  \frac{(k^{r+1}l^{r}(l+1)-k-1)(l+1)}{kl-1},
\end{align*}
that was already found in Corollary \ref{cor_1}.
}
}
\end{remark}

\subsection{The 2-dimensional irreducible representations of $\cA_{d}(k,l)$}
In this subsection, we study the  2-dimensional irreducible representations of $\cA_{d}(k,l)$. We will find that such representations exist and are in 1-1 correspondence with the zeros of a suitable integer polynomial of degree $r$. 

As a first step, we give a useful property  of the two-dimensional irreducible representations of $\cA_{d}(k,l)$. 
\begin{proposition}\label{lem_21}
Let $(U,\rho)$ be a 2-dimensional irreducible representation of $\cA_{d}(k,l)$. Then, up to isomorphism, we have
	\[
	\rho(M)=\begin{pmatrix} k & 0 \\ 0 & -1
	\end{pmatrix}\ \ \ \ \mathrm{ and\ }\ \  \rho(N)=\begin{pmatrix} 
		x & z \\ z & y \end{pmatrix},
		\]
	where  $t=\Trace(\rho(MN))$, and $x$, $y$ and $z$ satify 
	\[
	t=kx-y
	\]
	 and
	\[
x  = \frac{t+l-1}{k+1},\ \ \ y=\frac{kl-k-t}{k+1},\ \ \ z^2=\frac{(1+kl-t)(t+k+l)}{(k+1)^2}.
\]
\end{proposition}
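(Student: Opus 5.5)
Since $\cA_{d}(k,l)$ is the adjacency algebra of an association scheme, it is closed under transposition: $M^\top=M$ and, by Lemma \ref{lem_5}, $N^\top=N$. As recalled in Section \ref{sec_2}, for an irreducible submodule $U$ of the standard module with an orthonormal basis we have $\rho(A^*)=\rho(A)^*$. Hence $\rho(M)$ and $\rho(N)$ may be taken Hermitian. Eq. \eqref{eq_16} says that $\rho(M)$ is annihilated by $(X-k)(X+1)$, so $\rho(M)$ is diagonalizable with eigenvalues in $\{k,-1\}$. If $\rho(M)$ were scalar, then every eigenvector of $\rho(N)$ would span a $1$-dimensional submodule, contradicting irreducibility. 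So $\rho(M)$ has both eigenvalues. A unitary change of basis gives $\rho(M)=\mathrm{diag}(k,-1)$ with $\rho(N)=\begin{pmatrix} x & w\\ \bar w & y\end{pmatrix}$, where $x,y\in\bR$. Conjugating further by $\mathrm{diag}(1,e^{i\theta})$ keeps $\rho(M)$ fixed and makes the off-diagonal entry a real number $z$, so $\rho(N)$ is symmetric. Irreducibility also forces $z\neq 0$, since otherwise both coordinate axes would be invariant.

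Next we compute $t=\Trace(\rho(MN))$ directly from the matrices, which gives $t=kx-y$. Eq. \eqref{eq_17} gives two more relations. First, $\rho(N)$ is not scalar, because $z\ne0$, and it satisfies $(X-l)(X+1)=0$; so its eigenvalues are exactly $l$ and $-1$, and therefore $x+y=\Trace\rho(N)=l-1$ and $xy-z^2=\det\rho(N)=-l$. Solving the linear system $x+y=l-1$, $kx-y=t$ gives
\[
x=\frac{t+l-1}{k+1},\qquad y=\frac{kl-k-t}{k+1}.
\]
Then $z^2=xy+l$, and expanding this over the common denominator $(k+1)^2$ should factor as $(1+kl-t)(t+k+l)$. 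A quick check: the numerator is $(t+l-1)(kl-k-t)+l(k+1)^2$, which is quadratic in $t$ with leading coefficient $-1$, and it vanishes at $t=-k-l$ and at $t=kl+1$.

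The only delicate point is justifying the normal form with a real symmetric $\rho(N)$. This rests on $\rho$ preserving adjoints for orthonormal bases, together with the phase rescaling above. Alternatively, it suffices to note that the $2\times2$ matrix with diagonal $\rho(M)$ only needs its off-diagonal product $w\bar w=z^2$ fixed, and a diagonal similarity can make the two off-diagonal entries equal. Either route yields the stated form up to isomorphism. Eq. \eqref{eq_18} is not needed for this statement; it will presumably constrain $t$ later.
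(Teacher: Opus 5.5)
Your proof is correct and follows essentially the same route as the paper. Both use Eq.~\eqref{eq_16} and irreducibility to get $\rho(M)=\mathrm{diag}(k,-1)$ in an orthonormal eigenbasis, so that $\rho(N)$ is Hermitian, use the phase change $\mathrm{diag}(1,e^{i\theta})$ to make the off-diagonal entry real, and then solve $x+y=l-1$, $xy-z^2=-l$, $kx-y=t$. Your alternative via a diagonal similarity $\mathrm{diag}(1,c)$, which equalizes the two off-diagonal entries with $z^2$ equal to their product, is a small improvement, because it removes any need to realize $U$ unitarily inside the standard module.
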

\begin{proof}
From Eq.\eqref{eq_16} we see that the minimal polynomial of $\rho(M)$ must divide the polynomial $X^2-(k-1)X-k=(X-k)(X+1)$. This yields that $\rho(M)$ is diagonalizable, with eigenvalues in $\{k,-1\}$. Similarly, $\rho(N)$ is diagonalizable with eigenvalues in $\{l,-1\}$ by Eq.\eqref{eq_17}. If $\rho(M)$ is a scalar matrix, then any eigenvector of $\rho(N)$ is an  eigenvector of $\rho(M)$, whence  $U$ would have a non-trivial subrepresentation, against the irreducibility of $U$. 
Hence, $\rho(M)$ has two distinct eigenvalues, namely $k$ and $-1$. Similarly, the eigenvalues of  $\rho(N)$ are $l$ and $-1$.

Let's choose  two orthonormal eigenvectors of $\rho(M)$ as the basis of $U$. With respect to this basis  we have $\rho(M)=\begin{pmatrix} k & 0 \\ 0 & -1 \end{pmatrix}$. Let $\rho(N)=\begin{pmatrix} x & z \\ w & y \end{pmatrix}$. With this  choice of basis, $\rho(A^*)=\rho(A)^*$ holds for any $A\in\cA_{d}(k,l)$, where  $A^*$ is the  Hermitian transpose of $A$.   Therefore, $\rho(N)$ is  hermitian because $N$ is. This yields $\rho(N)=\begin{pmatrix} x & z \\ \bar z & y \end{pmatrix}$, with $z\neq 0$.

By Eq.\eqref{eq_17}, we see that $x$, $y$ and $z$ are complex numbers such that
\[
	\left\{
	\begin{array}{lclcl}
		x+y & = & \Trace(\rho(N)) & = & l-1\\[.03in]
		xy-z\bar z& = & \det(\rho(N)) & = &  -l
	\end{array}\right..
	\] 
\\
Set $t=\Trace(\rho(MN))=kx-y$. It is easy to check that 
\[
x  = \frac{t+l-1}{k+1},\ \ \ y=\frac{kl-k-t}{k+1},\ \ \ z\bar z=\frac{(1+kl-t)(t+k+l)}{(k+1)^2}.
\]
We note that  the change of basis in $U$ defined by the matrix  $\begin{pmatrix}
		1 & 0 \\
		0 & e^{i\theta}
	\end{pmatrix}$ leaves $\rho(M)$ invariant and maps  $\rho(N)$  to $\begin{pmatrix}
		x & ze^{i\theta} \\
		\bar ze^{-i\theta} & y
	\end{pmatrix}$.  Hence we can assume that $z=\bar z$ and this completes the proof.
\end{proof}
Let $S$ be  the  $2\times 2$ matrix
	\[
		S=\begin{pmatrix} k & 0 \\ 0 & -1
	\end{pmatrix}
\]
and, for any $t\in\bC$, let $T_t$ be the  $2\times 2$ matrix
\begin{equation}\label{eq_5}
 T_t=\begin{pmatrix} 
		x(t) & z(t) \\ z(t) & y(t) 
		\end{pmatrix},
\end{equation}
where $x(t)$, $y(t)$ and $z(t)$ are given by 
\begin{equation}\label{eq_8}
x(t)  = \frac{t+l-1}{k+1},\ \ \ y(t)=\frac{kl-k-t}{k+1},\ \ \ z^2(t)=\frac{(1+kl-t)(t+k+l)}{(k+1)^2}.
\end{equation}
Straightforward computations show that $t=kx(t)-y(t)=\Trace(ST_t)$.
 For  any $t\in\bC$, we consider the matrices $S$ and $T_t$.  Whenever $S$ and $T_t$ satisfy Eqs. \eqref{eq_16}, \eqref{eq_17} and  \eqref{eq_18}, there exists a homomorphism  $\rho_t:\cA_{d}(k,l)\rightarrow \Mat_2(\bC)$, such that $(\bC^2,\rho_t)$ is a 2-dimensional representation of $\cA_{d}(k,l)$.
 From Proposition \ref{lem_21}, we see that any 2-dimensional irreducible representation  of $\cA_{d}(k,l)$ is of this type. 
 
In what follows, to lighten the notation and when this does not create confusion, we will use $T=\begin{pmatrix} 
		x & z \\ z & y
		\end{pmatrix}$  instead of 
$T_t=\begin{pmatrix} 
		x(t) & z(t) \\ z(t) & y(t) 
		\end{pmatrix}$. With this notation we have 
		\begin{equation}\label{eq_43}
		t=\Trace(ST)=kx-y.
		\end{equation}

\begin{lemma}\label{lem_23}
For any $t\in\bC$, the matrices $S$ and $T$ satisfy Eqs. \eqref{eq_16} and \eqref{eq_17}.
\end{lemma}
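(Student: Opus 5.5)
Both identities are polynomial identities in the entries of the $2\times 2$ matrices, so the plan is to verify them through the Cayley--Hamilton theorem. For a $2\times 2$ matrix $X$ we have $X^2=\Trace(X)\,X-\det(X)\,I$. Hence \eqref{eq_16} holds for $S$ as soon as $\Trace(S)=k-1$ and $\det(S)=-k$, and \eqref{eq_17} holds for $T$ as soon as $\Trace(T)=l-1$ and $\det(T)=-l$.

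For $S=\begin{pmatrix} k&0\\0&-1\end{pmatrix}$ this is immediate: $\Trace(S)=k-1$ and $\det(S)=-k$. Equivalently, $S$ is diagonal with entries that are the roots of $X^2-(k-1)X-k=(X-k)(X+1)$.

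For $T=T_t$ I will use the expressions \eqref{eq_8}. First,
\[
x+y=\frac{t+l-1+kl-k-t}{k+1}=\frac{(k+1)(l-1)}{k+1}=l-1.
\]
Second, $\det(T)=xy-z^2$. Multiplying by $(k+1)^2$, it suffices to check the polynomial identity in $t$
\[
(t+l-1)(kl-k-t)-(1+kl-t)(t+k+l)=-l(k+1)^2 .
\]
The $t^2$ terms cancel, since both products contribute $-t^2$. The linear terms also cancel: the first product gives $t(kl-k-l+1)$ and the second gives $t(1+kl-k-l)$. So only the constants remain, namely $(l-1)(kl-k)-(1+kl)(k+l)$.

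Expanding, this constant is $kl^2-2kl+k-k-l-k^2l-kl^2=-l(k^2+2k+1)=-l(k+1)^2$, as required. Therefore $\det(T)=-l$, and Cayley--Hamilton gives $T^2=(l-1)T+lI$.

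Only $z^2$ enters the argument, never $z$ itself. So the choice of square root, and the possibility that $z$ is complex for arbitrary $t\in\bC$, are irrelevant, and the identities hold for every $t$. The only real obstacle is the bookkeeping in the constant-term expansion, which I would double-check by substituting a value such as $k=1$, $l=2$.
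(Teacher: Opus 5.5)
Your proof is correct and follows the paper's argument: $S$ is checked directly, and $T^2=(l-1)T+lI$ follows from Cayley--Hamilton once $\Trace(T)=l-1$ and $\det(T)=-l$. Your expansion of $(k+1)^2(xy-z^2)$ supplies the computation the paper leaves implicit, and your remark that only $z^2$ enters correctly covers arbitrary $t\in\bC$.
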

\begin{proof}
By direct computation, we see that $S^2=(k-1)S+kI$. Since $\Trace(T)=l-1$ and $\det(T)=-l$, we have  $T^2=(l-1)T+lI$ by the Cayley-Hamilton Theorem. 
\end{proof}
 The main tool we use to find the 2-dimensional representations of
$\cA_d(k,l)$ is the theory of second-order linear recurrence equations
with constant coefficients.

A {\em linear recurrence equation of order 2 with constant coefficients}  is a recurrence equation of type  
\begin{equation}\label{eq_39}
X_{i+2}+\alpha_1X_{i+1} + \alpha_0X_{i} = 0, \qquad i=0,1,\ldots,
\end{equation}
where $\alpha_1,\alpha_0\in\bC$, with $\alpha_0\neq 0$. Let $V$ be any vector space over $\bC$. A sequence $\{v_i\}_{i=0}^\infty$ of elements of $V$ is a {\em solution} of \eqref{eq_39} if $v_{i+2}+\alpha_1v_{i+1} + \alpha_0v_{i} = 0$, for  $i=0,1,\ldots$.

In the next result we collect the main properties of linear recurrence equations of order 2 with constant coefficients. For a general discussion of this topic and proofs of individual results, we refer the reader to \cite[Section 4]{kp}.

\begin{proposition}\label{prop_11}
Let 
\begin{equation}\label{eq_36}
	X_{i+2}+\alpha_1X_{i+1} + \alpha_0X_{i} = 0, \qquad i=0,1,\ldots,
\end{equation}
be a linear recurrence equation of order 2 with constant coefficients $\alpha_0,\alpha_1\in\bC$, $\alpha_0\neq 0$, and let $V$ be a vector space over $\bC$. Then the following hold:
\begin{itemize}
\item[i.] If $\{v_i\}_{i=0}^\infty$ and $\{w_i\}_{i=0}^\infty$ are  solutions of  \eqref{eq_36} with $v_0=w_0$ and $v_1=w_1$, then $v_i=w_i$ for  $i=0,1,\ldots$.
\item[ii.] If the polynomial $X^2+\alpha_1X+\alpha_0$ has two distinct
roots in $\bC$, say $\lambda$ and $\mu$, then every solution
$\{v_i\}_{i=0}^\infty$ of \eqref{eq_36} can be written as $v_i=\lambda^i v+\mu^i w$, $i=0,1,\ldots$, for some uniquely determined $v,w\in V$  depending only on $v_0$ and $v_1$.
\item[iii.] If $\{v_i\}_{i=0}^\infty$ is a solution of \eqref{eq_36} in $V$  and $\phi:V\to W$ is a linear map from $V$ to the vector space $W$, then $\{\phi(v_i)\}_{i=0}^\infty$ is a solution of \eqref{eq_36} in $W$.
\end{itemize}
\end{proposition}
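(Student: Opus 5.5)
We prove Proposition \ref{prop_11} with the standard linear-algebra argument for order-2 recurrences, working over an arbitrary complex vector space $V$. The tool throughout is rewriting the recurrence as $v_{i+2}=-\alpha_1v_{i+1}-\alpha_0v_i$: each term is then forced by the two preceding ones.

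For part i, we argue by strong induction on $i$. The cases $i=0,1$ are the hypothesis. If $v_j=w_j$ for all $j\le i+1$, then
\[
v_{i+2}=-\alpha_1v_{i+1}-\alpha_0v_i=-\alpha_1w_{i+1}-\alpha_0w_i=w_{i+2}.
\]

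For part ii, let $\lambda\ne\mu$ be the roots, so $\lambda^2+\alpha_1\lambda+\alpha_0=0$ and likewise for $\mu$. Since $\alpha_0\neq0$, neither root is zero, although this is not actually needed.
\begin{itemize}
\item First, for any $v,w\in V$ the sequence $u_i=\lambda^iv+\mu^iw$ solves the recurrence. Indeed,
\[
u_{i+2}+\alpha_1u_{i+1}+\alpha_0u_i=\lambda^i(\lambda^2+\alpha_1\lambda+\alpha_0)v+\mu^i(\mu^2+\alpha_1\mu+\alpha_0)w=0.
\]
\item Next, we match initial data by solving $v+w=v_0$ and $\lambda v+\mu w=v_1$. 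The coefficient matrix $\begin{pmatrix}1&1\\ \lambda&\mu\end{pmatrix}$ has determinant $\mu-\lambda\neq0$, which gives the unique solution
\[
v=\frac{v_1-\mu v_0}{\lambda-\mu},\qquad w=\frac{\lambda v_0-v_1}{\lambda-\mu}.
\]
These depend only on $v_0$ and $v_1$.
\item By part i, $v_i=\lambda^iv+\mu^iw$ for all $i$.
\item For uniqueness, suppose $v_i=\lambda^iv'+\mu^iw'$ is another such representation. Evaluating at $i=0,1$ gives the same invertible system, so $v'=v$ and $w'=w$.
\end{itemize}

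For part iii, apply the linear map $\phi$ to $v_{i+2}+\alpha_1v_{i+1}+\alpha_0v_i=0$. Linearity gives $\phi(v_{i+2})+\alpha_1\phi(v_{i+1})+\alpha_0\phi(v_i)=\phi(0)=0$.

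None of these steps is a real obstacle. The only point needing care is in part ii: the linear system is solved with vector unknowns, and its invertibility rests solely on the scalar Vandermonde determinant $\mu-\lambda$. Because the system has scalar coefficients, no structure on $V$ beyond being a $\bC$-vector space is required.
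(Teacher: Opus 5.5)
Your proof is correct. The paper does not prove this proposition itself; it refers to \cite[Section 4]{kp}. Your argument is the standard one, and your coefficients $v=(v_1-\mu v_0)/(\lambda-\mu)$ and $w=(\lambda v_0-v_1)/(\lambda-\mu)$ are exactly those the paper later derives in Lemma~\ref{lem_7}, with $\mu=\bar\lambda$.
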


 We use this machinery to determine the properties of the matrices found in Proposition \ref{lem_21}.

\begin{proposition}\label{prop_10}
	Let $t\in\bC$ and   set
\begin{equation}\label{eq_41}
S=\begin{pmatrix} k & 0 \\ 0 & -1
	\end{pmatrix},\ \ \ T=\begin{pmatrix} 
		x & z \\ z & y \end{pmatrix},
\end{equation}
where $x=x(t)$, $y=y(t)$ and $z=z(t)$ are given by \eqref{eq_8}. Then,  the sequences $\{(ST)^i\}_{i=0}^\infty$, $\{(TS)^i\}_{i=0}^\infty$, $\{(TS)^iT\}_{i=0}^\infty$, $\{(TS)^iT-(ST)^iS\}_{i=0}^\infty$, and the sequences of the corresponding traces are solutions of the linear recurrence equation \begin{equation}\label{eq_13}
			X_{i+2} - tX_{i+1} + klX_{i}=0, \qquad i=0,1,\ldots.
	\end{equation}
\end{proposition}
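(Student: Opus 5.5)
The plan is to reduce everything to the Cayley--Hamilton theorem for the $2\times 2$ matrices $ST$ and $TS$. First compute their trace and determinant. By \eqref{eq_43}, $\Trace(ST)=t$, and $\Trace(TS)=\Trace(ST)=t$. For the determinant,
\[
\det(ST)=\det(S)\det(T)=(-k)(-l)=kl,
\]
since $\det T=-l$ (this was used in the proof of Lemma \ref{lem_23}); likewise $\det(TS)=kl$. Cayley--Hamilton then gives
\[
(ST)^2-t(ST)+klI=0,\qquad (TS)^2-t(TS)+klI=0 .
\]
The recurrence \eqref{eq_13} has $\alpha_0=kl\neq 0$, because $k\ge 1$ follows from $\beta_0^A\ge 2$ (and in any case $k<l$), so it is a genuine order-2 recurrence in the sense of \eqref{eq_39}.

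Next, multiply these identities to obtain each sequence.
\begin{itemize}
\item Multiplying the identity for $ST$ on the left by $(ST)^i$ gives $(ST)^{i+2}-t(ST)^{i+1}+kl(ST)^i=0$. So $\{(ST)^i\}$ is a solution in $V=\Mat_2(\bC)$, and the same argument works for $\{(TS)^i\}$.
\item For $\{(TS)^iT\}$, right-multiply the relation for $\{(TS)^i\}$ by $T$; a fixed right multiplication is a linear map $\Mat_2(\bC)\to\Mat_2(\bC)$, so Proposition \ref{prop_11}(iii) applies.
\item Similarly $\{(ST)^iS\}$ is a solution.
\item The difference $\{(TS)^iT-(ST)^iS\}$ of two solutions is again a solution, by linearity of the recurrence.
\end{itemize}

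Finally, $\Trace:\Mat_2(\bC)\to\bC$ is linear, so Proposition \ref{prop_11}(iii) shows that all the trace sequences also satisfy \eqref{eq_13}. There is no real obstacle here. The only points needing care are the determinant value $kl$, which relies on the explicit $x,y,z^2$ of \eqref{eq_8} through $\det T=xy-z^2=-l$, and the observation that the recurrence is independent of which square root $z(t)$ is chosen.
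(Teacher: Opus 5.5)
Your proof is correct and follows the paper's argument. You apply Cayley--Hamilton to $ST$ using $\Trace(ST)=t$ and $\det(ST)=kl$, then use right multiplication by $S$ or $T$, closure of solutions under linear combinations, and linearity of the trace via Proposition~\ref{prop_11}(iii). The only cosmetic difference is that you apply Cayley--Hamilton to $TS$ directly, whereas the paper obtains $\{(TS)^i\}$ from $\{(ST)^i\}$ by transposition, since $TS=(ST)^\top$.
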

\begin{proof}
	By direct computation, we have $\det(ST)=kl$ and $\Trace(ST)=t$. Therefore, the characteristic polynomial of $ST$ is $X^2-tX+kl$. According to the Cayley-Hamilton theorem, the matrix $ST$ satisfies its own characteristic equation, that is $(ST)^2-tST+klI=0$. Multiplying this equation by $(ST)^i$, we get $(ST)^{i+2}-t(ST)^{i+1}+kl(ST)^i=0$, for $i=0,1,\ldots$. Therefore, the sequence $\{(ST)^i\}_{i=0}^\infty$ satisfies the linear recurrence equation \eqref{eq_13}. 
	
Since transposition is a linear map on $\Mat_2(\bC)$ and $TS=(ST)^\top $, we get that $\{(TS)^i\}_{i=0}^\infty$ is a solution of \eqref{eq_13} by property iii. in Proposition \ref{prop_11}. 
Similarly, by considering right multiplication by $S$ or $T$ as a
linear map on $\Mat_2(\bC)$, we get that
$\{(TS)^iT\}_{i=0}^\infty$ and
$\{(ST)^iS\}_{i=0}^\infty$ are solutions of \eqref{eq_13}.
The same holds for
$\{(TS)^iT-(ST)^iS\}_{i=0}^\infty$, since it is a linear combination
of solutions of \eqref{eq_13}.
 Finally, as  $\Trace:\Mat_2(\bC)\to \bC$ is a linear map,  the sequences given by the traces of $(ST)^i$, $(TS)^i$, $(TS)^iT$ and $(TS)^iT-(ST)^iS$, for $i=0,1,\ldots$, are solutions of \eqref{eq_13}.
\end{proof}

In the following, to make the notation shorter, we set 
\[
D_i=(TS)^iT-(ST)^iS,\qquad i=0,1,\ldots.
\]
 Recall that in Remark \ref{rem_15} we set $D=(NM)^rN-(MN)^rM$. So, according to Eq. \eqref{eq_18} in that remark,  we have to find conditions on $t\in\bC$ such that 
\begin{equation}\label{eq_40}
SD_r=kD_r=D_rS
\end{equation}
holds.  For this aim,  we need to compute the entries of $D_i$, for $i=0,1,\ldots$. We do this in the next result.
\begin{lemma}\label{lem_15}
Let $t\in\bC$, $S$ and $T$ as in \eqref{eq_41}, and $D_i=(TS)^iT-(ST)^iS$, for $i=0,1,\ldots$.
Then, there exist solutions $\{f_i\}_{i=0}^\infty$ and $\{h_i\}_{i=0}^\infty$ of the linear recurrence equation 
\begin{equation}\label{eq_13bis}
			X_{i+2} - tX_{i+1} + klX_{i}=0, \qquad i=0,1,\ldots,
	\end{equation}
 such that 
\[
D_i=\begin{pmatrix} h_i & zf_i \\ zf_i & (y+1)f_i\end{pmatrix}, \qquad i=0,1,\dots.
\]
 In particular, $f_0=1$, $f_1=t+k$, and $f_i=tf_{i-1}-klf_{i-2}$ for $i=2,3,\ldots.$
\end{lemma}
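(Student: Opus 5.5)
We use Proposition \ref{prop_10}: the sequence $\{D_i\}_{i=0}^\infty$ is a solution of the recurrence \eqref{eq_13bis} in $\Mat_2(\bC)$. By Proposition \ref{prop_11}, part iii, applied to the linear maps that extract a single matrix entry, each entry sequence of $D_i$ also solves \eqref{eq_13bis}. So we define $h_i=(D_i)_{11}$ and $f_i$ as the solution of \eqref{eq_13bis} with $f_0=1$ and $f_1=t+k$. The claim then reduces, by the uniqueness statement in Proposition \ref{prop_11}, part i, to checking the initial terms $i=0,1$. Specifically, we must check that $(D_i)_{12}=(D_i)_{21}=zf_i$ and $(D_i)_{22}=(y+1)f_i$ hold for $i=0,1$. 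Indeed, $\{zf_i\}$ and $\{(y+1)f_i\}$ are scalar multiples of a solution, hence solutions.

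For $i=0$ we have
\[
D_0=T-S=\begin{pmatrix} x-k & z\\ z & y+1\end{pmatrix},
\]
so $h_0=x-k$, the off-diagonal entries equal $z=zf_0$, and the $(2,2)$ entry is $(y+1)f_0$, as required. For $i=1$ we compute
\[
D_1=TST-STS,\qquad TST=\begin{pmatrix} kx^2-z^2 & kxz-yz\\ kxz-yz & kz^2-y^2\end{pmatrix},\qquad STS=\begin{pmatrix} k^2x & -kz\\ -kz & y\end{pmatrix}.
\]
The off-diagonal entry of $D_1$ is $z(kx-y+k)=z(t+k)$, using $t=kx-y$ from \eqref{eq_43}. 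This matches $zf_1$.

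The $(2,2)$ entry of $D_1$ is $kz^2-y^2-y$, and we need it to equal $(y+1)(t+k)$. This is the only step needing an actual identity, so we expect it to be the main obstacle, though it is only algebra. We use $xy-z^2=-l$, i.e.\ $z^2=xy+l$, together with $x+y=l-1$. These give
\[
kz^2-y^2-y=kxy+kl-y^2-y.
\]
On the other hand,
\[
(y+1)(t+k)=(y+1)(kx-y+k)=kxy-y^2+ky+kx-y+k.
\]
The difference of the two expressions is $kl-ky-kx-k=k(l-1-x-y)=0$. This proves the identity. Finally, the recurrence $f_i=tf_{i-1}-klf_{i-2}$ is just \eqref{eq_13bis} rewritten, and the symmetry $(D_i)_{12}=(D_i)_{21}$ also follows from $D_i^\top=D_i$, since $S$ and $T$ are symmetric.
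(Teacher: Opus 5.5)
Your proof is correct and follows essentially the same route as the paper: both use Proposition~\ref{prop_10} to see that the entries of $D_i$ solve the recurrence, then reduce by uniqueness (Proposition~\ref{prop_11}, part i) to the cases $i=0,1$, with the $(2,2)$ entry of $D_1$ handled via $z^2=xy+l$ and $x+y=l-1$. Your explicit computations of $TST$ and $STS$, and the final difference $k(l-1-x-y)=0$, are all accurate.
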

\begin{proof}
We note that $D_i=(TS)^iT-(ST)^iS$, $i=0,1,\dots$, is a symmetric matrix because $S$ and $T$ are. Set $D_i =\begin{pmatrix} h_i & \hat f_i \\ \hat f_i & g_i\end{pmatrix}$, for  $i=0,1,\dots$. By Proposition \ref{prop_10}, the sequence $\{D_i\}_{i=0}^\infty$ is a solution of the recurrence equation \eqref{eq_13bis}. This yields that the  sequences $\{h_i\}_{i=0}^\infty$,  $\{\hat f_i\}_{i=0}^\infty$ and $\{g_i\}_{i=0}^\infty$ are solutions of \eqref{eq_13bis}.  Let $\{f_i\}_{i=0}^\infty$ be the sequence of complex numbers  defined inductively by setting $f_{0}=1$, $f_{1}=t+k$ and $f_{i+2}=  tf_{i+1} - klf_i$, for  $i=0,1,\ldots$. Then, the sequence $\{f_i\}_{i=0}^\infty$ is solution of  \eqref{eq_13bis}. This implies that the sequence $\{\tilde D_i\}_{i=0}^\infty$ with 
\[
\tilde D_i=\begin{pmatrix} h_i & zf_i \\ zf_i & (y+1)f_i\end{pmatrix}, \qquad i=0,1,\dots
\]
is a solution of \eqref{eq_13bis}; here $y=y(t)$ and $z=z(t)$ are defined as in \eqref{eq_8}. 
We claim that $\{D_i\}_{i=0}^\infty$ and $\{\tilde D_i\}_{i=0}^\infty$ coincide. By property i. in Proposition \ref{prop_11} it is enough to check that $D_{0}=\tilde D_{0}$ and $D_{1}=\tilde D_{1}$ to conclude that $D_i=\tilde D_i$, for  $i=0,1,\ldots$.

For $i=0$, we have
\[
D_{0}  =  T-S  =  \begin{pmatrix} x-k & z \\ z & y+1\end{pmatrix}.
\]
Therefore,  $h_{0}=x-k$ and 
\[
D_{0}=\begin{pmatrix} h_{0} & zf_{0} \\ zf_{0} & (y+1)f_{0}\end{pmatrix} = \tilde D_{0}.
\]
 For $i=1$, we have
\[
D_{1} =  TST-STS  =  \begin{pmatrix} kx^2-z^2-k^2x & z(kx-y+k) \\ z(kx-y+k) & kz^2-y^2-y\end{pmatrix}.
\]
Therefore $h_{1}=kx^2-z^2-k^2x$. From  \eqref{eq_43}, we have $kx-y=t$, so $z(kx-y+k)=zf_{1}$. Furthermore, from \eqref{eq_8} we have $z^2=xy+l$ and $x+y=l-1$, from which $kz^2-y^2-y=(y+1)(kx-y+k)=(y+1)f_{1}$. 
Therefore,
\[
D_{1} = \begin{pmatrix} h_{1} & zf_{1} \\ zf_{1} & (y+1)f_{1}\end{pmatrix} = \tilde D_{1},
\]
and we are done.
\end{proof}
%
%
\begin{corollary}\label{lem_6}
Let  $r\geq 1$, $t\in\bC$, $S$ and $T$ as in \eqref{eq_41}, and $D_r=(TS)^rT-(ST)^rS$.  Let $f_r$ be the $r$-th element of the sequence given by Lemma \ref{lem_15}. Then, $SD_{r}=kD_{r}$ if and only if either $f_{r}=0$ or $t=kl+1$.
\end{corollary}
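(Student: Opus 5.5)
Lemma \ref{lem_15} gives
\[
D_r=\begin{pmatrix} h_r & zf_r \\ zf_r & (y+1)f_r\end{pmatrix},
\]
and $S=\mathrm{diag}(k,-1)$. The plan is simply to compare the entries of $SD_r$ and $kD_r$. We have
\[
SD_r=\begin{pmatrix} kh_r & kzf_r \\ -zf_r & -(y+1)f_r\end{pmatrix}.
\]
The first rows of $SD_r$ and $kD_r$ agree automatically. So $SD_r=kD_r$ holds if and only if the second row matches, i.e. $-zf_r=kzf_r$ and $-(y+1)f_r=k(y+1)f_r$. Since $k\ge 1$, we have $k+1\neq 0$, so this is equivalent to
\[
zf_r=0\quad\text{and}\quad (y+1)f_r=0 .
\]
Hence $SD_r=kD_r$ if and only if either $f_r=0$, or $z=0$ and $y=-1$ hold simultaneously.

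It remains to show that $z=y+1=0$ happens exactly when $t=kl+1$. By \eqref{eq_8}, $y+1=\frac{kl+1-t}{k+1}$, so $y=-1$ if and only if $t=kl+1$. Conversely, if $t=kl+1$, the factor $1+kl-t$ in $z^2=\frac{(1+kl-t)(t+k+l)}{(k+1)^2}$ vanishes, so $z=0$ as well. Therefore the condition ``$z=0$ and $y=-1$'' is exactly $t=kl+1$, and combining this with the previous paragraph gives the claimed equivalence.

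There is no real obstacle; the only care needed is at this last step. One must keep both conditions $zf_r=0$ and $(y+1)f_r=0$ together. The root $t=-k-l$ of $z^2$ alone gives $z=0$ but $y+1=\frac{(k+1)(l+1)}{k+1}=l+1\neq 0$, so it is not a spurious extra case.
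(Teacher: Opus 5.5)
Your proposal is correct and follows the paper's own argument: compute $SD_r$ entrywise from Lemma \ref{lem_15}, reduce to $zf_r=(y+1)f_r=0$ using $k\neq -1$, and then identify $z=y+1=0$ with $t=kl+1$ via \eqref{eq_8}. Your extra check that the root $t=-k-l$ of $z^2$ gives $y+1=l+1\neq 0$, and so adds no further case, is a correct and harmless addition.
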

\begin{proof}
Direct computation gives $SD_{r}=\begin{pmatrix} kh_{r} & kzf_{r} \\ -zf_{r} & -(y+1)f_{r}\end{pmatrix}$. Thus, $SD_{r}=kD_{r}$ if and only $-zf_{r}=kzf_{r}$ and $-(y+1)f_{r}=k(y+1)f_{r}$. Since $k\neq -1$, this is possible if and only if either $f_{r}=0$ or $z=y+1=0$. In the latter case, from \eqref{eq_8} we get $0=y+1=\frac{kl+1-t}{k+1}$, that is, $t=kl+1$.  For this value of $t$, again from $\eqref{eq_8}$, we get $z=0$.
\end{proof}
\begin{proposition}
	Let $t=kl+1$ and $S$ and $T$ be as in \eqref{eq_41}. Let $\rho$ be the homomorphism $\rho:\cA_{d}(k,l)\rightarrow\End(\bC^2)$ such that $\rho(M)=S$ and 
	$\rho(N)=T$. Then, the representation $(\bC^2,\rho)$ is  reducible and it is isomorphic to $U_{1,1}\oplus U_{1,2}$. 
	\end{proposition}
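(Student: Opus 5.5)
At $t=kl+1$ we compute $T$ explicitly from \eqref{eq_8} and observe that it is diagonal. Then $S$ and $T$ are both diagonal, $\rho$ splits as the direct sum of two 1-dimensional representations, and we identify these with $U_{1,1}$ and $U_{1,2}$ using Table \ref{tab_2}.

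First we substitute $t=kl+1$ into \eqref{eq_8}:
\[
x=\frac{kl+l}{k+1}=l,\qquad y=\frac{kl-k-kl-1}{k+1}=-1,\qquad z^2=\frac{(1+kl-kl-1)(kl+1+k+l)}{(k+1)^2}=0 .
\]
Hence $T=\mathrm{diag}(l,-1)$, while $S=\mathrm{diag}(k,-1)$. By Lemma \ref{lem_23}, $S$ and $T$ satisfy Eqs. \eqref{eq_16} and \eqref{eq_17}. By Corollary \ref{lem_6}, they also satisfy $SD_r=kD_r$. The remaining identity $D_rS=kD_r$ follows by transposition, since $S$ and $D_r$ are symmetric. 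So $\rho$ is a well-defined representation; alternatively, the diagonal computation below checks \eqref{eq_18} directly.

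Since both $\rho(M)$ and $\rho(N)$ are diagonal, the coordinate lines $\langle e_1\rangle$ and $\langle e_2\rangle$ are invariant under every element of $\rho(\cA_d(k,l))$. Therefore $\bC^2=\langle e_1\rangle\oplus\langle e_2\rangle$ as modules, and $(\bC^2,\rho)$ is reducible.

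On $\langle e_1\rangle$ we have $M\mapsto k$ and $N\mapsto l$. A 1-dimensional representation of $\cA_d(k,l)$ is determined by the images of its generators $M$ and $N$. By the proof of Proposition \ref{prop_3}, this summand is therefore the principal representation $U_{1,1}$. As a consistency check, $D\mapsto l^rk^rl-k^rl^rk=k^rl^r(l-k)$, which matches Table \ref{tab_2}.

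On $\langle e_2\rangle$ we have $M\mapsto -1$ and $N\mapsto -1$, which gives $U_{1,2}$. Again as a check, $D\mapsto(-1)^{2r+1}-(-1)^{2r+1}=0$, in agreement with Table \ref{tab_2}.

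The isomorphism $(\bC^2,\rho)\cong U_{1,1}\oplus U_{1,2}$ follows. There is no real obstacle in this argument: the only step requiring care is the substitution into \eqref{eq_8}, and in particular checking that $z^2=0$, since that is what makes $T$ diagonal.
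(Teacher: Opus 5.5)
Your proof is correct and follows the same route as the paper. You check the defining equations via Lemma~\ref{lem_23} and Corollary~\ref{lem_6}, substitute $t=kl+1$ into \eqref{eq_8} to get $T=\mathrm{diag}(l,-1)$, and conclude that the coordinate lines are submodules; you also explicitly match the two summands with $U_{1,1}$ and $U_{1,2}$ via Table~\ref{tab_2}, a step the paper leaves implicit.
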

\begin{proof}	
From Lemma \ref{lem_23} and Corollary \ref{lem_6}, the matrices $S$ and $T$ satisfy Eqs. \eqref{eq_16}, \eqref{eq_17} and \eqref{eq_18}. Therefore, $(\bC^2,\rho)$ with $\rho(M)=S$ and  $\rho(N)=T$ is a 2-dimensional representation of $\cA_{d}(k,l)$. From \eqref{eq_8}, with  $t=kl+1$, we get $x=l$, $y=-1$ and $z=0$, i.e.,
\[
T=\begin{pmatrix}
		l & 0 \\
		0 & -1
	\end{pmatrix}.
\]
This implies that $\bC^2$ contains one-dimensional submodules, so $(\bC^2,\rho)$ is reducible. 
\end{proof}
In the following we  determine the constraints that the condition $f_{r}=0$ imposes on the complex number $t\in\bC$ so that the matrices $S$ and $T$   satisfy Eq.  \eqref{eq_18}. To this end, we introduce the sequence $\{F_i(X)\}_{i\ge 0}$ of the integer polynomials defined by the three-term recurrence formula
\begin{equation}\label{eq_7}
\begin{array}{c}
F_0(X)=1, \ \ \ \ F_1(X) =  X+k,\\[.2in]
F_{i+2}(X) =  XF_{i+1}(X)-klF_{i}(X), \qquad i=0,1,\ldots.
\end{array}
\end{equation}
\begin{lemma}\label{lem_22}
Let $t\in\bC$ and $\{f_i\}_{i=0}^\infty$ be a solution of the linear recurrence equation \eqref{eq_13}. Then  $f_i=F_i(t)$, for $i=0,1,\ldots$. Therefore,  $f_{r}=0$ implies that $t$ is a zero of $F_r(X)$. 
\end{lemma}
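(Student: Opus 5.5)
The plan is to compare the two sequences through the uniqueness statement in Proposition \ref{prop_11}, part i. Here $\{f_i\}_{i=0}^\infty$ is the sequence of Lemma \ref{lem_15}, i.e.\ the solution of \eqref{eq_13} with $f_0=1$ and $f_1=t+k$. Without fixing these initial values the claim cannot hold, since a general solution of \eqref{eq_13} need not agree with $F_i(t)$.

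Fix $t\in\bC$ and consider the sequence of complex numbers $\{F_i(t)\}_{i=0}^\infty$ obtained by evaluating the polynomials of \eqref{eq_7} at $X=t$. Evaluation at $t$ is a ring homomorphism $\bZ[X]\to\bC$. Applying it to the three-term relation $F_{i+2}(X)=XF_{i+1}(X)-klF_i(X)$ therefore gives
\[
F_{i+2}(t)-tF_{i+1}(t)+klF_i(t)=0 \qquad \text{for all } i\ge 0 .
\]
So $\{F_i(t)\}$ is a solution of the recurrence \eqref{eq_13}, with $\alpha_1=-t$ and $\alpha_0=kl\neq 0$. The latter holds because $k,l\ge 1$: if $\beta_0^A=2$, then by Theorem \ref{th_6} the graph is one of the listed exceptional cases, and in any event $k\ge1$ because $\Gamma$ is not $K_{1,n}$.

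Next, I check the initial values. We have $F_0(t)=1=f_0$ and $F_1(t)=t+k=f_1$, using the values of $f_0$ and $f_1$ given in Lemma \ref{lem_15}. Proposition \ref{prop_11} i, applied with $V=\bC$, then yields $f_i=F_i(t)$ for every $i\ge 0$. Alternatively, a direct two-step induction on $i$ gives the same conclusion, since both sequences obey the same recurrence and agree at $i=0,1$.

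The final assertion follows by specializing to $i=r$: if $f_r=0$, then $F_r(t)=0$, so $t$ is a root of the integer polynomial $F_r(X)$. This polynomial is monic of degree $r$, as a trivial induction on \eqref{eq_7} shows.

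There is no real obstacle here. The only point needing care is the one above: the lemma must be read with $\{f_i\}$ being the specific solution from Lemma \ref{lem_15}, and the proof uses exactly those two initial conditions.
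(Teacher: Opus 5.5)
Your proposal is correct and is essentially the paper's proof, which likewise reads off $f_0=1$, $f_1=t+k$ and the recurrence from Lemma \ref{lem_15} and concludes $f_i=F_i(t)$ by induction on $i$; your observation that the statement only makes sense for that particular solution is accurate. The detour through Theorem \ref{th_6} to get $kl\neq 0$ is unnecessary (one has $k\ge 1$ simply because $\Gamma$ is not $K_{1,n}$, and $l>k$), and the direct two-step induction does not need this hypothesis at all.
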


\begin{proof}
 By Lemma \ref{lem_15},   $f_0=1$, $f_1=t+k$, and $f_i=tf_{i-1}-klf_{i-2}$ for $i=2,3,\ldots.$ The result  follows by induction on $i$. It is evident that $f_r=0$ holds whenever $t$ is a zero of $F_r(X)$.
\end{proof}
The following result is  crucial for finding all irreducible two-dimensional representations of $\cA_d(k,l)$.
\begin{proposition}\label{prop_2bis}
	Let $\Gamma=(V,E)$ be an unbalanced distance-biregular graph with diameter $d=2r+2$ and girth deficiency two and let $\cA_{d}(k,l)$ be its adjacency algebra. Let $F_r(X)$ be the $r$-th  polynomial  defined by the  three-term recurrence formula \eqref{eq_7}, $t\in\bC$  a zero of $F_r(X)$ and  $S$ and $T_t$  the $2\times 2$ matrices  as in  \eqref{eq_5}. Then, there exists a 2-dimensional representation $(U_t,\rho_t)$ of $\cA_{d}(k,l)$ such that  $\rho_t(M)=S$ and $\rho_t(N)=T_t$.
\end{proposition}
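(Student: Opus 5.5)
The plan is to check that $S$ and $T_t$ satisfy the three defining equations \eqref{eq_16}, \eqref{eq_17} and \eqref{eq_18} of Remark \ref{rem_15}. Since $\cA_{d}(k,l)$ is the algebra generated by $M$ and $N$ subject to exactly these identities, the assignment $M\mapsto S$, $N\mapsto T_t$ then extends to an algebra homomorphism $\rho_t:\cA_{d}(k,l)\to\Mat_2(\bC)\cong\End(\bC^2)$, and we take $U_t=\bC^2$. First I fix a square root $z(t)$ of the quantity $z^2(t)$ in \eqref{eq_8}, so that $T_t$ is a well-defined complex matrix. Then Lemma \ref{lem_23} gives \eqref{eq_16} and \eqref{eq_17} for every $t\in\bC$, with no condition on $t$.

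For \eqref{eq_18}, I put $D_r=(T_tS)^rT_t-(ST_t)^rS$. This is the image of $D=(NM)^rN-(MN)^rM$ under the prospective assignment, and \eqref{eq_40} has to be verified. By Lemma \ref{lem_15} and Lemma \ref{lem_22},
\[
D_r=\begin{pmatrix} h_r & zF_r(t)\\ zF_r(t) & (y+1)F_r(t)\end{pmatrix}.
\]
Since $t$ is a zero of $F_r(X)$, this reduces to $D_r=\begin{pmatrix} h_r & 0\\ 0 & 0\end{pmatrix}$. Corollary \ref{lem_6} then gives $SD_r=kD_r$, which can also be seen directly from $S=\mathrm{diag}(k,-1)$. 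For the other half, $D_rS$ equals $\mathrm{diag}(h_r,0)\,\mathrm{diag}(k,-1)=\mathrm{diag}(kh_r,0)=kD_r$. Alternatively, $D_r$ is symmetric because $S$ and $T_t$ are, so $D_rS=(SD_r)^\top=kD_r^\top=kD_r$. This mirrors the transpose argument used for $DM=kD$ in Proposition \ref{prop_4}.

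The main obstacle is the justification that verifying the defining equations suffices, i.e.\ that $\cA_{d}(k,l)$ really is presented by \eqref{eq_16}, \eqref{eq_17} and \eqref{eq_18}; the computations themselves are routine. I would rely on Remark \ref{rem_15} together with the discussion at the start of Section \ref{sec_4}, which states that $(U,\rho)$ is a representation precisely when $\rho(M)$ and $\rho(N)$ satisfy these equations. To make this more robust, I would point out that the proof of Proposition \ref{prop_4} uses only these three relations to reduce every word in $M,N$ of length at least $d$ to shorter words. Hence the algebra they present is spanned by the $2d-1$ words of length at most $d-1$ listed in Corollary \ref{rem_3a}. Since $\cA_{d}(k,l)$ has dimension $2d-1$ by Corollary \ref{cor_7}, the natural surjection from the presented algebra onto $\cA_{d}(k,l)$ is an isomorphism. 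Therefore the pair $(S,T_t)$ defines a genuine homomorphism $\rho_t$ on $\cA_{d}(k,l)$, and $(U_t,\rho_t)$ is the required 2-dimensional representation.
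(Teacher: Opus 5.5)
Your proof is correct and follows the paper's: Lemma \ref{lem_23} gives \eqref{eq_16} and \eqref{eq_17}, and $F_r(t)=0$ together with Lemma \ref{lem_15} (equivalently Corollary \ref{lem_6}) and the symmetry of $S$ and $D_r$ gives $SD_r=kD_r=D_rS$. Your extra argument that \eqref{eq_16}--\eqref{eq_18} actually present $\cA_d(k,l)$ (reduce long words, then compare with the dimension $2d-1$ from Corollary \ref{cor_7}) justifies what the paper only asserts in Remark \ref{rem_15}. One small correction: in the abstract algebra, transposition is not yet available, so obtain the reduction of $(NM)^{r+1}$ directly from $DM=kD$ instead of by transposing, as the paper does.
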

\begin{proof}
We have to check that the matrices $S$ and $T_t$ satisfy the  equations \eqref{eq_16}, \eqref{eq_17}, and \eqref{eq_18}. By Lemma \ref{lem_23}, 
$S$ and $T_t$ satisfy Eqs. \eqref{eq_16} and \eqref{eq_17}.

Since $f_r=F_r(t)=0$, Corollary \ref{lem_6} gives
$SD_r=kD_r$. Taking transposes and using the symmetry of $S$ and
$D_r$, we also obtain $D_rS=kD_r$. Hence
$(\bC^2,\rho_t)$, with $\rho_t(M)=S$ and $\rho_t(N)=T_t$, is a
2-dimensional representation of $\cA_d(k,l)$.
\end{proof}

 The representation $(U_t,\rho_t)$ given by the previous proposition is called the representation associated with $t$.

In light of Proposition \ref{prop_2bis}, it is now necessary to gather as much information as possible about the zeros of the polynomial $F_r(X)$. We take advantage of the fact that the sequence $\{F_i(X)\}_{i\ge 0}$ is defined by the three-term recurrence formula \eqref{eq_7} with non-constant terms  as well as  the well-known Chebyshev polynomials of the second kind.  We recall that the {\em Chebyshev polynomials of the second kind} are defined by 
\begin{equation}\label{eq_19}
U_n(\cos\theta) \sin \theta= \sin((n+1)\theta)\ \ \mathrm{for \ }n=0,1,\ldots
\end{equation}
These polynomials  satisfy the three-term recurrence formula
\[
U_0(X)=1, \ \ \ \ U_1(X) =  2X,
\] 
\[
U_{n+2}(X) = 2XU_{n+1}(X)-U_{n}(X), \qquad n=0,1,\ldots, 
\]
and the zeros of $U_n(X)$ are:
\[
x_{i}=\cos \left({\frac {i}{n+1}}\pi \right),\qquad i=1,\ldots ,n.
\] 
We refer the reader to \cite{chi} for more information about Chebyshev polynomials.
%
%
\begin{proposition}\label{prop_2}
The zeros $t_1,t_2,\ldots,t_r$ of $F_r(X)$ are real numbers such that
\[
2\sqrt{kl}\cos\left(\frac{r+2-i}{r+1}\pi\right) <t_i<2\sqrt{kl}\cos\left(\frac{r+1-i}{r+1}\pi\right),
\] 
for $i=1,\ldots, r$. Therefore, $0<|t_i|<2\sqrt{kl}$, for $i=1,\ldots,n$.
\end{proposition}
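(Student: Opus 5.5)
The plan is to reduce $F_r$ to Chebyshev polynomials of the second kind by rescaling, and then locate the zeros by a sign-change (intermediate value) argument in the angle variable.

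\emph{Rescaling.} Put $X=2\sqrt{kl}\,c$ and $F_i(X)=(kl)^{i/2}G_i(c)$. The recurrence \eqref{eq_7} becomes
\[
G_{i+2}(c)=2cG_{i+1}(c)-G_i(c),\qquad G_0=1,\qquad G_1=2c+q,
\]
where $q=\sqrt{k/l}$. By Corollary \ref{cor_2}, $k<l$, so $0<q<1$. The sequence $U_i(c)+qU_{i-1}(c)$, with $U_{-1}=0$, satisfies the same recurrence and has the same two initial terms. Hence, by induction (or by Proposition \ref{prop_11}\,i.),
\[
G_i(c)=U_i(c)+qU_{i-1}(c).
\]
Taking $c=\cos\theta$ with $\theta\in(0,\pi)$ and using \eqref{eq_19}, we get
\[
\sin\theta\; G_r(\cos\theta)=\varphi(\theta):=\sin((r+1)\theta)+q\sin(r\theta).
\]
Since $\sin\theta\neq0$ on $(0,\pi)$, zeros of $\varphi$ in $(0,\pi)$ correspond exactly to zeros of $F_r$ of the form $2\sqrt{kl}\cos\theta$.

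\emph{Sign changes.} Let $\theta_j=j\pi/(r+1)$. Then $\sin((r+1)\theta_j)=0$ and $\sin(r\theta_j)=\sin(j\pi-\theta_j)=(-1)^{j+1}\sin\theta_j$. Hence
\[
\varphi(\theta_j)=(-1)^{j+1}q\sin\theta_j\neq0,\qquad j=1,\dots,r,
\]
and these values alternate in sign. This gives a zero of $\varphi$ in each open interval $(\theta_j,\theta_{j+1})$ for $j=1,\dots,r-1$.

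For the last interval, take $\theta=\pi-\varepsilon$ with $\varepsilon\to0^+$. Then
\[
\varphi(\pi-\varepsilon)=(-1)^r\bigl(\sin((r+1)\varepsilon)-q\sin(r\varepsilon)\bigr)\sim(-1)^r(r+1-qr)\,\varepsilon .
\]
Because $q<1$, this has sign $(-1)^r$, which is opposite to the sign $(-1)^{r+1}$ of $\varphi(\theta_r)$. So there is also a zero in $(\theta_r,\pi)$. This is the key point, and it uses $k<l$ essentially.

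\emph{Conclusion.} We now have at least $r$ distinct zeros $\theta$, one in each interval $(\theta_j,\theta_{j+1})$ for $j=1,\dots,r$, where $\theta_{r+1}=\pi$. Since $F_r$ is monic of degree $r$, these give all of its zeros, and they are real and simple. The map $\theta\mapsto 2\sqrt{kl}\cos\theta$ is decreasing on $(0,\pi)$. So labelling the zeros increasingly as $t_1<\dots<t_r$, the zero $t_i$ comes from the interval $(\theta_{r+1-i},\theta_{r+2-i})$. This gives exactly the stated inequalities, and the strict bound $|t_i|<2\sqrt{kl}$ follows because every interval lies strictly inside $(0,\pi)$.

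Finally, to see $t_i\neq0$, evaluate at $\theta=\pi/2$:
\[
\varphi(\pi/2)=\sin\bigl((r+1)\tfrac{\pi}{2}\bigr)+q\sin\bigl(r\tfrac{\pi}{2}\bigr).
\]
Exactly one of the two sines is $0$ and the other is $\pm1$, so $\varphi(\pi/2)$ equals $\pm1$ or $\pm q$. Both are nonzero, so $0$ is not a zero of $F_r$.

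The main care needed is the endpoint analysis near $\theta=\pi$, where the hypothesis $q<1$ produces the extra sign change, together with the bookkeeping of the index reversal in the labelling.
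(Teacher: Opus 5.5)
Your proof is correct and follows essentially the same route as the paper. Both arguments write $F_r=\widetilde U_r+k\widetilde U_{r-1}$ (your $U_r+qU_{r-1}$ with $q=\sqrt{k/l}$ after rescaling), read off the alternating signs at the zeros $\cos(j\pi/(r+1))$ of $U_r$, use $k<l$ to get the extra sign change at $\theta\to\pi$ (the paper's value $F_r(-2\sqrt{kl})=(-\sqrt{kl})^r(r+1-r\sqrt{k/l})$), and finish with the intermediate value theorem and a degree count. The differences are cosmetic: you use a first-order expansion instead of L'H\^{o}pital's rule at the endpoint, and you check $F_r(0)\neq 0$ via $\varphi(\pi/2)\in\{\pm1,\pm q\}$ rather than via the recurrence $F_{i+2}(0)=-klF_i(0)$.
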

\begin{proof}
 Let $\{U_i(X)\}_{i\ge 0}$  be the sequence of the Chebyshev polynomials of the second kind. Set $\widetilde U_0(X) = 1$, $\widetilde U_1(X) = X$ and 
\begin{equation}\label{eq_37}
	\widetilde U_n(X)=(\sqrt{kl})^nU_n(X/(2\sqrt{kl})), \qquad n=2,\ldots.
\end{equation}
 It is easy to check that $\widetilde U_n(X)$ satisfies the three-term recurrence formula
\[
\widetilde U_{n+1}(X) = X\widetilde U_{n}(X)-kl \widetilde U_{n-1}(X),\qquad  n=1,2,\ldots.
\]  
 Since $F_0(X)=1=\widetilde U_{0}(X)$ and $F_1(X)=X+k=\widetilde U_1(X)+k \widetilde U_{0}(X)$, by induction we have 
\[
F_{n+1}(X)=\widetilde U_{n+1}(X)+k \widetilde U_{n}(X),\qquad n=1,2,\ldots.
\]
In particular,
\begin{equation}\label{eq_20}
F_r(X)=\widetilde U_r(X)+k \widetilde U_{r-1}(X). 
\end{equation} 
For $i=0,\dots,r$ we define
\[
\theta_i = \frac{r+1-i}{r+1}\ \textrm{ and }\ x_i=2\sqrt{kl}\cos\left(\theta_i\pi\right), 
\] 
so that $-2\sqrt{kl}=x_0<x_1<\ldots<x_r$. Let $\sgn$ denote the signum function defined by
\[
\sgn\,x=\left\{\begin{array}{ll}
	1 & \mathrm{if\ }x>0\\
	0 & \mathrm{if\ }x=0\\
	-1 & \mathrm{if\ }x<0\\
\end{array}\right..
\]
We want to compute $\sgn\, F_r(x_i)$ for each $i=0,\dots,r$. 
Since the zeros of $U_r(X)$ are of the form $\cos\left(\frac{i}{r+1}\pi\right)$, we note that $x_1,\ldots,x_r$ are the zeros of $\widetilde U_r(X)$. Therefore, from Eq. \eqref{eq_20} we get $F_r(x_i)=k \widetilde U_{r-1}(x_i)$, for each $i=1,\dots,r$.
So, taking into account Eq. \eqref{eq_37} we have
\[ \sgn\, F_r(x_i)=\sgn\, \widetilde U_{r-1}(x_i) = \sgn\, U_{r-1}(\cos (\theta_i\pi))   \]
Recall that, for any $x>0$,
\[ \sgn\, \sin (x\pi) =\left\{\begin{array}{ll}
	0 & \mathrm{if\ } x\in\bN \\
	(-1)^{\floor{x}} & \mathrm{if\ }x\notin\bN
\end{array}\right.. \]
Fix $i\in\{ 1,\dots,r \}$. By elementary algebraic manipulations we get 
\[ 0 < \frac{r+1-i}{r+1} < 1\ \textrm{ and }\ r-i < \frac{r(r+1-i)}{r+1} < r+1-i, \]
that is $\floor{\theta_i}=0$ and $\floor{r\theta_i}=r-i$. Therefore
\[  
\sgn\, \sin \left(\theta_i\pi\right) = 1 \ \textrm{ and }\ \sgn\, \sin \left(r\theta_i\pi\right) = (-1)^{r-i}.
\]
In conclusion, by Eq. \eqref{eq_19},
\[
\sgn\, U_{r-1}(\cos (\theta_i\pi)) = \sgn\, \sin \left(\theta_i\pi\right) \cdot \sgn\, \sin \left(r\theta_i\pi\right) = (-1)^{r-i}.
\]
Thus, we proved that $\sgn\, F_r(x_i)=(-1)^{r-i}$, for each $i=1,\dots,r$.

To obtain $\sgn\, F_r(x_0)$, we plug into Eq. \eqref{eq_20} the explicit expression of $\widetilde U_r(X)$ obtained from Eqs. \eqref{eq_37} and (\ref{eq_19}) and we get
\[ F_r(2\sqrt{kl}\cos{\theta})\sin\theta = (\sqrt{kl})^r\sin{((r+1)\theta)}+k(\sqrt{kl})^{r-1}\sin{r\theta}. \]
Then,  taking the limit for $\theta\rightarrow\pi$ and using L'H\^{o}pital's Rule, we get 
\begin{align*}
F_r(x_0)=F_r(-2\sqrt{kl})=&\lim_{\theta\rightarrow\pi}{F_r(2\sqrt{kl}\cos{\theta})}\\[.1in]
=&\lim_{\theta\rightarrow\pi}{\left[\frac{(\sqrt{kl})^r}{\sin\theta}\left(\sin{((r+1)\theta)}+\sqrt{k/l}\sin{r\theta}\right)\right]}\\[.1in]
=&\frac{(\sqrt{kl})^r}{\cos\pi}\left((r+1)\cos{((r+1)\pi)}+r\sqrt{k/l}\cos{r\pi}\right)\\[.1in]
=&(-\sqrt{kl})^r(r+1-r\sqrt{k/l}).
\end{align*}

Recall that Corollary \ref{cor_1} gives that $l>k$, so that $r+1-r\sqrt{k/l}>0$. This implies that $\sgn\ F_r(-2\sqrt{kl})=(-1)^r$. Therefore, 
\[
\sgn\, F_r(x_i)=(-1)^{r-i},\ \mathrm{ for }\ i=0,\dots,r.
\]
This yields that $F_r(x_i)$ alternates in sign as $i$ varies from 0 to $r$, that is, $F_r(X)$ has $r$ distinct real zeros, say $t_1,t_2,\ldots,t_r$, with  $t_i\in(x_{i-1},x_i)$, $i=1,\dots,r$. The zeros of $F_r(X)$ separate those of $\widetilde U_r(X)$ in the following way:
	\[
	-2\sqrt{kl}=x_0<t_1<x_1<\cdots <t_r<x_r<2\sqrt{kl}.
	\]
Finally, assume $F_r(0)=0$. From Eq. \eqref{eq_7}, we obtain $0=F_r(0)=(-kl)^{r-1}F_0(0)=(-kl)^{r-1}$,  that contradicts $kl>0$.
\end{proof}
\begin{corollary}\label{cor_5}
	Let $t$ be a root of $F_r(X)$. Then, the complex number $z$ such that $z^2=\frac{(1+kl-t)(t+k+l)}{(k+1)^2}$, is not zero.
\end{corollary}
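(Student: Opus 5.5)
Since $k+1>0$, the plan is to show that both factors $1+kl-t$ and $t+k+l$ are nonzero whenever $F_r(t)=0$. By Proposition \ref{prop_2}, every zero $t$ of $F_r(X)$ is real and satisfies $|t|<2\sqrt{kl}$. So it suffices to check that $kl+1$ and $-(k+l)$ both lie outside the open interval $(-2\sqrt{kl},2\sqrt{kl})$.

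For the first factor, note that
\[
kl+1-2\sqrt{kl}=(\sqrt{kl}-1)^2\ge 0,
\]
hence $kl+1\ge 2\sqrt{kl}>t$. Therefore $1+kl-t>0$.

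For the second factor, the AM--GM inequality gives $k+l\ge 2\sqrt{kl}$, hence
\[
t+k+l> -2\sqrt{kl}+k+l\ge 0 .
\]
In fact, since $k<l$ by Corollary \ref{cor_2}, this inequality is strict even before adding $t$. So $t+k+l>0$.

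Both factors are strictly positive, so $z^2>0$ and in particular $z\neq 0$; it is also real, which is consistent with the choice $z=\bar z$ made in Proposition \ref{lem_21}. There is no real obstacle here. The only point to be careful about is that the bound in Proposition \ref{prop_2} is strict, $|t_i|<2\sqrt{kl}$. This strictness matters in the first case when $kl=1$, but that case cannot occur since $k<l$ forces $l\ge 2$.
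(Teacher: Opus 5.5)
Your proposal is correct and is essentially the paper's proof: both use the strict bound $|t|<2\sqrt{kl}$ from Proposition \ref{prop_2}, together with $kl+1\ge 2\sqrt{kl}$ and $k+l\ge 2\sqrt{kl}$, to rule out the two roots $t=kl+1$ and $t=-k-l$ of $z^2$. Your version adds only the observation that both factors are in fact positive, so $z^2>0$ and $z$ is real and nonzero.
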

\begin{proof}
First of all note that $|t|<2\sqrt{kl}$ by Proposition \ref{prop_2}.
If $z=0$, then either $t=kl+1$ or $t=-k-l$. We claim that none of these cases occur. Assume $t=kl+1$. From $kl-2\sqrt{kl} +1=(\sqrt{kl}-1)^2\ge0$ we get 
$|t|=kl +1\ge 2\sqrt{kl}>|t|$. Now assume $t=-k-l$. From $l-2\sqrt{kl}+k=(\sqrt{l}-\sqrt{k})^2\ge0$ we get $|t|=k+l\ge2\sqrt{kl}>|t|$.
\end{proof}
We are now in a position to give the list of all the irreducible representations of $\cA_d(k,l)$.
\begin{theorem}\label{th_8}
	Let $\Gamma=(V,E)$ be an unbalanced distance-biregular graph with diameter $d=2r+2$ and girth deficiency two, and let  $\cA_{d}(k,l)$ be its adjacency algebra.
	Up to isomorphism, $\cA_d(k,l)$ has $r+3$ finite dimensional irreducible representations. These are the three 1-dimensional representations $U_{1,1}$, $U_{1,2}$ and $U_{1,3}$ described in Proposition \ref{prop_3} and the 2-dimensional  representations $U_{t_1},\dots,U_{t_r}$ corresponding to the roots of the $r$-th polynomial $F_r(X)$ defined by the  three-term recurrence formula \eqref{eq_7}.
\end{theorem}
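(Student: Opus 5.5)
The plan is to combine the classification already carried out with a dimension count via Eq. \eqref{eq_28}. First, every irreducible representation of $\cA_d(k,l)$ has dimension at most two. Since $\cA_d(k,l)$ is semisimple of dimension $2d-1=4r+3$ (Corollary \ref{cor_7}), Eq. \eqref{eq_28} gives $\sum_i(\dim U_i)^2=4r+3$.

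So it suffices to exhibit three pairwise nonisomorphic 1-dimensional representations and $r$ pairwise nonisomorphic irreducible 2-dimensional ones. Their squared dimensions then already sum to $3+4r=4r+3$, which leaves no room for further irreducible representations, of dimension two or larger. The 1-dimensional ones are exactly $U_{1,1},U_{1,2},U_{1,3}$ by Proposition \ref{prop_3}.

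For the 2-dimensional ones, Proposition \ref{prop_2} gives $r$ distinct real roots $t_1,\dots,t_r$ of $F_r(X)$, and Proposition \ref{prop_2bis} gives a representation $(U_{t_i},\rho_{t_i})$ with $\rho_{t_i}(M)=S$ and $\rho_{t_i}(N)=T_{t_i}$. I will show the following two facts.
\begin{enumerate}
\item[(a)] Each $U_{t_i}$ is irreducible. A 1-dimensional submodule would be spanned by a common eigenvector of $S$ and $T_{t_i}$. The eigenvectors of $S$ are the two standard basis vectors, since $k\neq -1$. Neither is an eigenvector of $T_{t_i}$, because its off-diagonal entry $z(t_i)$ is nonzero by Corollary \ref{cor_5}.
\item[(b)] The $U_{t_i}$ are pairwise nonisomorphic. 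Isomorphic representations have equal traces on every element. Here $\Trace(\rho_{t_i}(MN))=\Trace(ST_{t_i})=t_i$, and the $t_i$ are distinct.
\end{enumerate}
Irreducible 2-dimensional representations are also clearly not isomorphic to 1-dimensional ones.

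Combining these, the $r+3$ listed representations have squared dimensions summing to $3+4r=\dim\cA_d(k,l)$. Because the algebra is semisimple and Eq. \eqref{eq_28} runs over all nonisomorphic irreducible representations, there can be no others. This conclusion also agrees with Proposition \ref{lem_21} together with Corollary \ref{lem_6}: any irreducible 2-dimensional representation has the form $(S,T_t)$ with $F_r(t)=0$, since $t=kl+1$ gives a reducible representation.

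The only delicate point is applying Eq. \eqref{eq_28}, which requires that $\cA_d(k,l)$ really has dimension $4r+3$ and is semisimple. Both are established earlier: the first by Corollary \ref{cor_7}, the second because $\cA_d(k,l)$ is an adjacency algebra closed under conjugate transpose. The rest is bookkeeping.
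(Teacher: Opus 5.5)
Your proposal is correct and follows the paper's proof essentially step for step. It uses the three 1-dimensional representations from Proposition \ref{prop_3}, irreducibility of each $U_{t_i}$ via $z(t_i)\neq 0$ (Corollary \ref{cor_5}), pairwise non-isomorphism via $\Trace(ST_{t_i})=t_i$, and the count $3+4r=2d-1$ against Eq. \eqref{eq_28}. Your opening assertion that every irreducible representation has dimension at most two is never used as an input and follows from that count, so it does no harm.
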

\begin{proof}
	The three 1-dimensional representations described in Proposition \ref{prop_3} are clearly irreducible. Let $t_1,\ldots, t_r$ be the roots of $F_r(X)$. By Proposition \ref{prop_2}, these are pairwise distinct real numbers, and,  by Proposition \ref{prop_2bis}, for each $t_i$ there is a 2-dimensional representation $U_{t_i}$ of $\cA_d(k,l)$. 
	
	Assume that $U_{t_i}$ and  $U_{t_j}$ are isomorphic for distinct roots $t_i$ and $t_j$  of $F_r(X)$. Set $\rho_i(N)=T_{t_i}$ and $\rho_j(N)=T_{t_j}$, where $T_{t_i}$ and $T_{t_j}$ are as in \eqref{eq_5}.
	Since the trace of a square matrix is invariant under similarity transformations, we would have $t_i=\Trace(ST_{t_i})=\Trace(ST_{t_j})=t_j$, a contradiction. Therefore, the representations $U_{t_i}$, $i=1,\ldots,r$, are pairwise non-isomorphic.
	
	We now prove that each $U_{t_i}$ is irreducible. Since  $\dim\, U_{t_i}=2$, then any non-trivial subrepresentation of $U_{t_i}$ should have dimension 1, i.e.,  $S$ and $T_{t_i}$  should have a common eigenvector. Since $S$ is a diagonal matrix and $z(t_i)\neq 0$ by Corollary \ref{cor_5}, it is easy to check that $S$ and $T_{t_i}$ have no eigenvector in common.

	Summarizing, there are  three distinct 1-dimensional (irreducible) representations and $r$ non-isomorphic 2-dimensional irreducible representations of $\cA_d(k,l)$. Note that 
	\[
	(\dim\, U_{1,1})^2+(\dim\, U_{1,2})^2+(\dim\, U_{1,3})^2+\sum_{i=1}^r{(\dim\, U_{t_i})^2}=3+4r= 2d-1=\dim\,\cA_d(k,l).
	\]
	Finally, by Eq. \eqref{eq_28} we see that $\cA_d(k,l)$ has no other irreducible representations, up to isomorphism.
\end{proof}
\section{The multiplicities of the irreducible representations  of $\cA_d(k,l)$}\label{sec_5}
 {Let  $\cA_{d}(k,l)$ be the adjacency algebra of an unbalanced distance-biregular graph  $\Gamma=(V,E)$   with diameter $d=2r+2$ and girth deficiency two.
In this section we evaluate the multiplicities of the decomposition of the standard module $\bC^E$ of $\cA_{d}(k,l)$ into irreducible modules.}

For any irreducible representation $(U,\rho)$ of $\cA_d(k,l)$, let
$m$ denote its multiplicity as a submodule of $\bC^E$, and let
$\zeta:\cA_d(k,l)\to\bC$, $A\mapsto\Trace(\rho(A))$, be its character. In \cite{hig3} it was shown that $m$ and $\zeta$ are related by the following {\em orthogonality relation}:
\begin{equation}\label{eq_10}
\sum_{s\in \cR}{\frac{\overline{\zeta(A_s)}\zeta(A_s)}{\eta_s}}=\frac{\zeta(I)}{m}|\Omega|,
\end{equation}
where $\eta_s$ is the valency of the relation $s$. Note that $\zeta(I)=\dim\, U$.

By taking into account Lemma \ref{lem_5},  {Corollary \ref{rem_3a}, and that the trace of a square matrix is invariant under transposition}, we get
\[
\zeta((MN)^i)=\Trace(\rho(MN)^i)
=\Trace(\rho(NM)^i)
=\zeta((NM)^i),
\]
for $i=1,\ldots,r$. Since  $\eta_sv=A_sv$, where $v=\sum_{x\in E}{\chi_x}$, formula \eqref{eq_10} becomes
\begin{equation}\label{eq_12}
\begin{split}
\frac{\zeta(I)}{m}|E|=&\zeta(I)^2+2\sum_{i=1}^{r}{\frac{\zeta((MN)^i)\overline{\zeta((MN)^i)}}{k^il^i}}+\frac{1}{k}\sum_{i=0}^{r}{\frac{\zeta((MN)^iM)\overline{\zeta((MN)^iM)}}{k^{i}l^i}}\\& 
+\frac{1}{l}\sum_{i=0}^{r-1}{\frac{\zeta((NM)^iN)\overline{\zeta((NM)^iN})}{k^{i}l^{i}}}+\frac{\zeta(D)\overline{\zeta(D)}}{k^rl^r(l-k)}.
\end{split}
\end{equation}
In the following we will use the  formula \eqref{eq_12} to calculate the multiplicities  of the irreducible representations of the adjacency algebra $\cA_d(k,l)$. We start with the 1-dimensional representations of $\cA_d(k,l)$, which are given in Proposition \ref{prop_3}.
\begin{proposition}
Let $m_{1,i}$ be the multiplicity of the 1-dimensional representation $U_{1,i}$ of $\cA_d(k,l)$  {given by Proposition \ref{prop_3}}, for $i=1,2,3$. Then 
\begin{align*}
m_{1,1}=& 1 ,\\
m_{1,2}= & \frac{k^{r+1}l^r(l+1)}{k+1},\\
m_{1,3}= & \frac{(k^{r+1}l^r(l+1)-k-1)(l-k)}{(kl-1)(k+1)}.
\end{align*}
\end{proposition}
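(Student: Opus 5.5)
The plan is to apply the orthogonality relation \eqref{eq_12} with $\zeta(I)=1$ to each of the three characters in Table \ref{tab_2}, and to solve for $m$ in $|E|/m=\Sigma$. Here $\Sigma$ denotes the right-hand side of \eqref{eq_12}, and $|E|=\frac{(k^{r+1}l^r(l+1)-k-1)(l+1)}{kl-1}$ by Proposition \ref{cor_1}. All character values are real, so each term is a square divided by the corresponding valency.

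\emph{Case $U_{1,1}$.} The character values coincide with the valencies. Each term $\zeta(A_s)^2/\eta_s$ therefore reduces to $\eta_s$, and $\Sigma=\sum_s\eta_s=|E|$, which gives $m_{1,1}=1$. This is also immediate because $U_{1,1}$ is spanned by the all-ones vector.

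\emph{Case $U_{1,2}$.} Here $\zeta=\pm1$ on all basis elements except $D$, where it is $0$. The sum is then a set of geometric series:
\[
\Sigma=1+2\sum_{i=1}^r\frac{1}{(kl)^i}+\frac1k\sum_{i=0}^r\frac{1}{(kl)^i}+\frac1l\sum_{i=0}^{r-1}\frac{1}{(kl)^i}.
\]
I will sum each series with the common ratio $(kl)^{-1}$ and put everything over the denominator $(kl)^r k(kl-1)$. The numerator should then factor as $(k+1)(l+1)\bigl(k^{r+1}l^r(l+1)-k-1\bigr)$, up to the appropriate power bookkeeping. Comparing with $|E|$ should give $m_{1,2}=\frac{k^{r+1}l^r(l+1)}{k+1}$. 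As a sanity check I will verify $r=1$ by hand.

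\emph{Case $U_{1,3}$.} The squared values are $k^{2i}$, $k^{2i}$, $k^{2i+2}$, $k^{2i}$, and $k^{2r}(k+1)^2$. Dividing by the valencies gives terms $l^{-i}$, $k\,l^{-i}$, $l^{-i-1}$ and $k^r(k+1)^2/(l^r(l-k))$. Thus
\[
\Sigma=1+2\sum_{i=1}^r l^{-i}+\sum_{i=0}^r\frac{k}{l^i}+\sum_{i=0}^{r-1}\frac{1}{l^{i+1}}+\frac{k^r(k+1)^2}{l^r(l-k)},
\]
which is a geometric sum in $l^{-1}$ plus a single correction term. The main obstacle is the algebraic simplification of this expression. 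I expect the combined geometric part to equal $\frac{(k+1)(l^{r+1}-1)}{l^r(l-1)}$ plus a small remainder. Adding the $D$-term and clearing denominators, the result should collapse to $\frac{(k+1)(kl-1)(l+1)}{(l-k)\,\cdots}$ so that $|E|/\Sigma$ gives the stated $m_{1,3}$.

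Rather than grinding out the full simplification, the cleaner route is a consistency check. Since $m_{1,1}+m_{1,2}+m_{1,3}+2\sum_j m_{t_j}=|E|$, I will verify the claimed $m_{1,3}$ by directly confirming the identity $\Sigma\cdot m_{1,3}=|E|$. After multiplying out, this becomes a polynomial identity in $k$, $l$ and $X=k^rl^r$ (with $l^r$ appearing only through $X$ and $k^r$). I will check it by expanding both sides, and spot-check $r=1$ and $r=2$ numerically.
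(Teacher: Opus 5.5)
Your strategy is the paper's: substitute the characters of Table~\ref{tab_2} into \eqref{eq_12} and sum geometric series. Your treatment of $U_{1,1}$ and your displayed sum for $U_{1,2}$ are correct. The $U_{1,3}$ case, however, contains an arithmetic error, and the verification you propose would fail because of it.

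When you divide the squared character values by the valencies, you drop a factor $k^i$ in each of the first three families. The correct terms are
\[
\frac{k^{2i}}{k^il^i}=\Bigl(\frac{k}{l}\Bigr)^i,\qquad \frac{k^{2i+2}}{k^{i+1}l^i}=k\Bigl(\frac{k}{l}\Bigr)^i,\qquad \frac{k^{2i}}{k^il^{i+1}}=\frac{1}{l}\Bigl(\frac{k}{l}\Bigr)^i .
\]
So the series is geometric in $y=k/l$, not in $l^{-1}$. With your sum, the identity $\Sigma\, m_{1,3}=|E|$ is false as soon as $k\ge 2$. For example, with $r=1$, $k=2$, $l=3$ your expression gives $32/3$, while the true value is $12$. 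Numerical spot-checks on the known examples $(k,l)=(1,2),(1,6)$ would not catch this, because $k^i=1$ there.

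With the correct ratio, no ``consistency check'' is needed; the simplification takes two lines. Multiply by $1-y=(l-k)/l$. The $D$-term becomes $(k+1)^2y^r/l$ and cancels exactly against the tails of the geometric sums:
\[
(1-y)\Sigma=\frac{(k+1)(l+1)}{l}+\frac{y^r}{l}\bigl[(k+1)^2-k(k+2)-1\bigr]=\frac{(k+1)(l+1)}{l}.
\]
Hence $\Sigma=(k+1)(l+1)/(l-k)$, and $m_{1,3}=|E|(l-k)/((k+1)(l+1))$, which is the stated value.

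In the $U_{1,2}$ case, your predicted numerator carries a spurious factor $l+1$. Over the denominator $k^{r+1}l^r(kl-1)$, the numerator is $(k+1)\bigl(k^{r+1}l^r(l+1)-k-1\bigr)$. An extra factor $l+1$ would give $k^{r+1}l^r/(k+1)$ instead of $m_{1,2}$.

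Finally, the dimension count $m_{1,1}+m_{1,2}+m_{1,3}+2\sum_j m_{t_j}=|E|$ plays no role in your argument. Checking $\Sigma\, m_{1,3}=|E|$ is just \eqref{eq_12} itself.
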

\begin{proof}
Clearly,  $m_{1,1}=1$ since $U_{1,1}$ is the principal representation. For the 1-dimensional representations $U_{1,2}$ and $U_{1,3}$ of $\cA_d(k,l)$, the values taken by the corresponding characters $\zeta_{1,2}$ and $\zeta_{1,3}$ are given in Table \ref{tab_2}. By plugging them in \eqref{eq_12}, the result follows by standard algebraic manipulations of geometric sums.
\end{proof}
\begin{remark}{\em
Note that $m_{1,2}$ is an integer because of the feasibility condition in Corollary \ref{cor_2}. 
} 
\end{remark}
\begin{proposition}
 {The value} $m_{1,3}$ is an integer.
\end{proposition}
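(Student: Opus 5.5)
The plan is to express $m_{1,3}$ directly in terms of the sizes of the two partition classes computed in Proposition \ref{cor_1}, Part 3. Those sizes are integers because they count vertices, so it suffices to write $m_{1,3}$ as an integer combination of them.

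First, set
\[
Q=\frac{k^{r+1}l^r(l+1)-k-1}{kl-1}.
\]
By Proposition \ref{cor_1}, Part 3, $Q=|B|$, so $Q$ is a positive integer. The same proposition gives
\[
|A|=\frac{Q(l+1)}{k+1},
\]
so $(k+1)$ divides $Q(l+1)$. Note that this divisibility already carries the feasibility condition of Corollary \ref{cor_2}.

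Second, rewrite the formula for $m_{1,3}$ from the preceding proposition as
\[
m_{1,3}=\frac{Q(l-k)}{k+1}=\frac{Q\bigl((l+1)-(k+1)\bigr)}{k+1}=\frac{Q(l+1)}{k+1}-Q=|A|-|B|.
\]
The right-hand side is a difference of two integers, so $m_{1,3}$ is an integer.

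Finally, as a consistency check, $m_{1,3}>0$ because $l>k$ by Corollary \ref{cor_2}, which is what a multiplicity requires. There is no real obstacle: the only point that needs care is recognising that $l-k=(l+1)-(k+1)$ splits the expression into $|A|$ and $|B|$. A direct attempt to show that $(k+1)(kl-1)$ divides the numerator by modular arithmetic would be messier.
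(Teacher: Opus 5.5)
Your argument is correct, but it takes a different route from the paper's.

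The paper stays at the level of parameters. It expands $\frac{k^{r+1}l^r(l+1)-k-1}{kl-1}=k^rl^r+(k+1)\sum_{i=0}^{r-1}k^il^i$. In your notation this gives $Q(l-k)\equiv(-l)^r(l+1)\,\mod\,(k+1)$. The paper then invokes the congruence $l^r(l+1)\equiv 0\,\mod\,(k+1)$ of Corollary~\ref{cor_2}.

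You instead spot the identity $m_{1,3}=|A|-|B|$ in the closed formulas of Proposition~\ref{cor_1}, Part 3. The fact that these formulas count vertices then does all the arithmetic. This is shorter and more conceptual. The $U_{1,3}$-component of $\bC^E$ (where $M$ acts as $k$ and $N$ as $-1$) consists of functions on $A$, pulled back to $E$, that sum to zero over the neighbourhood of each vertex of $B$. So your identity amounts to saying that the $B\times A$ incidence matrix has full rank $|B|$.

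What the paper's computation buys is that integrality of $m_{1,3}$ follows from Condition~\ref{item_3} of Theorem~\ref{th_7} alone. It therefore holds for every parameter triple satisfying that congruence, not only for triples realised by a graph. That is the form relevant to the feasibility search of Section~\ref{sec_7}. To get this from your identity, you would have to show that the congruence forces the expression for $|A|$ to be an integer. That step is exactly the paper's reduction of $|B|$ modulo $k+1$.
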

\begin{proof}
We write
\[
\begin{split}
\frac{k^{r+1}l^r(l+1)-k-1}{(kl-1)}
              &= \frac{k^{r+1}l^{r+1}-1}{kl-1}+k\frac{k^rl^{r}-1}{kl-1}\\
              &= \sum_{i=0}^{r}{k^il^i}+k\sum_{i=0}^{r-1}{k^il^i}\\
              &= k^rl^r+(k+1)\sum_{i=0}^{r-1}{k^il^i}.
\end{split}
\]
Therefore, 
\[
\frac{(k^{r+1}l^r(l+1)-k-1)(l-k)}{kl-1} \equiv (-l)^r(l+1)\, \mod\,(k+1)
\]
which is congruent to 0 mod $k+1$ by Corollary \ref{cor_2}.
\end{proof}
 {We now consider the 2-dimensional irreducible representations of $\cA_d(k,l)$ which, by Theorem \ref{th_8}, are in 1-1 correspondence with the zeros of the polynomial $F_r(X)$. Recall that the   zeros of $F_r(X)$ are $r$ distinct real numbers by Proposition \ref{prop_2}.} 
 {Let $t$ be a zero of $F_r(X)$ and $(U_t,\rho_t)$ be the irreducible representation of $\cA_d(k,l)$ associated with it. Then  $\rho_t(M)=S$ and $\rho_t(M)=T_t$, where $S$ and $T_t$ are the matrices defined by \eqref{eq_5}. In order to compute its multiplicity, in light of Proposition \ref{prop_10}, we again make use  of the theory of linear recursion \cite{kp}.} 
\begin{remark}\label{rem_6}
{\em For any zero $t$ of $F_r(X)$, the discriminant of the polynomial $X^2-tX+kl$ is $\Delta_t=t^2-4kl$. By Proposition \ref{prop_2}, we have $|t|<2\sqrt{kl}$ so that $\Delta_t=t^2-4kl$ is negative. Therefore, $X^2-tX+kl$ has two distinct complex roots: $\lambda\in\bC\setminus\bR$ and its complex conjugate $\bar\lambda=kl/\lambda$.}
\end{remark}
\begin{lemma}\label{lem_7}
 Let $t$ be a zero of $F_r(X)$, and $\lambda$ a zero of $X^2-tX+kl$.  Then, for any  complex solution $\{a_i\}_{i=0}^\infty$ of the linear recurrence equation \eqref{eq_13} we have
\begin{equation}\label{eq_22}
a_i=\frac{1}{\lambda-\bar\lambda}\left[\lambda^i(a_1-\bar\lambda a_0)-\bar\lambda^i(a_1-\lambda a_0)\right]. 
\end{equation}
\end{lemma}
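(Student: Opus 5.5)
By Remark \ref{rem_6}, $\lambda$ and $\bar\lambda$ are distinct, so I will apply Proposition \ref{prop_11}, part ii, with $V=\bC$. The characteristic polynomial of the recurrence \eqref{eq_13} is exactly $X^2-tX+kl$, and its roots are $\lambda$ and $\bar\lambda$. Hence any complex solution can be written as $a_i=\lambda^i v+\bar\lambda^i w$ for uniquely determined constants $v,w\in\bC$, which depend only on $a_0$ and $a_1$.

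The constants follow from the initial conditions $a_0=v+w$ and $a_1=\lambda v+\bar\lambda w$. This linear system has determinant $\bar\lambda-\lambda\neq 0$, and solving it gives
\[
v=\frac{a_1-\bar\lambda a_0}{\lambda-\bar\lambda},\qquad w=-\frac{a_1-\lambda a_0}{\lambda-\bar\lambda}.
\]
Substituting these into $a_i=\lambda^i v+\bar\lambda^i w$ yields \eqref{eq_22} directly.

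As a consistency check, I will confirm that the right-hand side of \eqref{eq_22} equals $a_0$ at $i=0$ and $a_1$ at $i=1$. The $i=1$ value simplifies to $(\lambda a_1-\bar\lambda a_1)/(\lambda-\bar\lambda)=a_1$. The expression is also a combination of $\lambda^i$ and $\bar\lambda^i$, so it solves \eqref{eq_13}. Part i of Proposition \ref{prop_11} then gives equality for every $i$, which is an alternative route that avoids solving for $v,w$ explicitly.

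There is no real obstacle here. The only point that needs care is that $\lambda\neq\bar\lambda$, i.e.\ $\lambda\notin\bR$. This is guaranteed because $|t|<2\sqrt{kl}$ by Proposition \ref{prop_2}, so the discriminant $t^2-4kl$ is negative and the roots form a genuine conjugate pair.
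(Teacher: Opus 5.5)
Your proposal is correct and follows the paper's proof: invoke Remark \ref{rem_6} to get two distinct roots $\lambda\neq\bar\lambda$, apply part ii of Proposition \ref{prop_11} to write $a_i=\lambda^i v+\bar\lambda^i w$, and solve the $2\times 2$ system from $a_0,a_1$. The consistency check via part i is a harmless extra; the paper only solves for the coefficients directly.
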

\begin{proof}
 {The characteristic polynomial of the linear recurrence equation \eqref{eq_13} is $X^2-tX+kl$. By Remark \ref{rem_6}, this polynomial has two complex  {distinct} roots $\lambda$ and $\bar\lambda$.  {By property ii. in Proposition \ref{prop_11}, }every solution $\{a_i\}_{i=0}^\infty$ of \eqref{eq_13} can be written as $a_i= \beta \lambda^i + \gamma \bar\lambda^i$ for any $i\geq0$ and for some $\beta,\gamma\in\bC$. In particular, for $i=0$ and $i=1$ we get 
\[
\left\{
\begin{array}{lcl}
	a_0 & = & \beta + \gamma \\[.08in]
	a_1 & = & \beta \lambda + \gamma \bar\lambda
\end{array}\right..
\]
Solving for $\beta$ and $\gamma$,  we get
\[ \beta = \frac{a_1-\bar\lambda a_0}{\lambda-\bar\lambda}, \qquad\gamma= \frac{\lambda a_0-a_1}{\lambda-\bar\lambda},  \]
 {whence equality \eqref{eq_22}.}}
\end{proof}
\begin{theorem}\label{th_4}
Let $t$ be a zero of $F_r(X)$, let $(U,\rho)$ be the irreducible
2-dimensional representation of $\cA_d(k,l)$ associated with it, and
let $\zeta$ be its character. Let $\lambda$ denote a zero of
$X^2-tX+kl$. Then 
\begin{align*}
\zeta((MN)^i)&=\zeta((NM)^i)=\lambda^i+\bar\lambda^i,\\
\zeta((MN)^iM)&=\frac{1}{\lambda-\bar\lambda}\left[\lambda^i(\lambda(k-1)+k(l-1))-\bar\lambda^i(\bar\lambda(k-1)+k(l-1))\right],\\
\zeta((NM)^iN)&=\frac{1}{\lambda-\bar\lambda}\left[\lambda^i(\lambda(l-1)+l(k-1))-\bar\lambda^i(\bar\lambda(l-1)+l(k-1))\right],\\
\zeta(D)&=\frac{l-k}{\lambda-\bar\lambda}\left[\lambda^r(\lambda-1)-\bar\lambda^r(\bar\lambda-1)\right].
\end{align*}
\end{theorem}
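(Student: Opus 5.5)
Since $\zeta(A)=\Trace(\rho(A))$ with $\rho(M)=S$ and $\rho(N)=T=T_t$, the plan is to apply Lemma \ref{lem_7} to four trace sequences. By Proposition \ref{prop_10}, the sequences $\Trace((ST)^i)$, $\Trace((TS)^i)$, $\Trace((ST)^iS)$, $\Trace((TS)^iT)$ and $\Trace(D_i)$ all solve the recurrence \eqref{eq_13}. (For $(ST)^iS$, right multiplication by $S$ is a linear map, so this follows from Proposition \ref{prop_11}, iii.) Remark \ref{rem_6} guarantees that $\lambda\neq\bar\lambda$. So by Lemma \ref{lem_7} each sequence is determined by its first two terms $a_0,a_1$, through
\[
a_i=\frac{1}{\lambda-\bar\lambda}\left[\lambda^i(a_1-\bar\lambda a_0)-\bar\lambda^i(a_1-\lambda a_0)\right].
\]
The whole task therefore reduces to computing $a_0$ and $a_1$ in each case and simplifying $a_1-\bar\lambda a_0$ using $\lambda+\bar\lambda=t$ and $\lambda\bar\lambda=kl$.

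For $(MN)^i$, $ST$ has eigenvalues $\lambda,\bar\lambda$ because its characteristic polynomial is $X^2-tX+kl$. Hence $\Trace((ST)^i)=\lambda^i+\bar\lambda^i$ directly. Equality with $\zeta((NM)^i)$ holds because $TS=(ST)^\top$.

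For $(MN)^iM$, take $a_0=\Trace S=k-1$ and $a_1=\Trace(STS)=k^2x+y$. Substituting $x,y$ from \eqref{eq_8} and using $t=kx-y$ gives $a_1=(k-1)t+k(l-1)$. Then
\[
a_1-\bar\lambda a_0=(k-1)(t-\bar\lambda)+k(l-1)=(k-1)\lambda+k(l-1),
\]
which is the claimed coefficient. The conjugate term follows symmetrically.

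For $(NM)^iN$, take $a_0=l-1$ and $a_1=\Trace(TST)$. One checks $a_1=kx^2-y^2+(k-1)z^2$, and this should reduce, via $z^2=xy+l$ and $x+y=l-1$, to $(l-1)t+l(k-1)$. This gives $a_1-\bar\lambda a_0=(l-1)\lambda+l(k-1)$.

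For $D$, I use Lemma \ref{lem_15}: $\Trace D_i=h_i+(y+1)f_i$. Alternatively, and more simply, $\Trace D_i=\Trace((TS)^iT)-\Trace((ST)^iS)$, so $\zeta(D)$ is the difference of the two previous formulas at $i=r$. The bracket of that difference is
\[
\lambda^r\bigl[(l-k)\lambda+(l-k)\bigr]-\bar\lambda^r\bigl[(l-k)\bar\lambda+(l-k)\bigr]=(l-k)\bigl[\lambda^r(\lambda+1)-\bar\lambda^r(\bar\lambda+1)\bigr],
\]
which has $\lambda+1$ where the statement has $\lambda-1$. To reconcile the two, I will use $F_r(t)=0$. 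From $f_i=\frac{1}{\lambda-\bar\lambda}\bigl[\lambda^i(\lambda+k)-\bar\lambda^i(\bar\lambda+k)\bigr]$, the condition $f_r=0$ gives $\lambda^{r}(\lambda+k)=\bar\lambda^{r}(\bar\lambda+k)$. Hence $\lambda^r(\lambda+1)-\bar\lambda^r(\bar\lambda+1)=(1-k)(\lambda^r-\bar\lambda^r)$. Likewise, $\lambda^r(\lambda-1)-\bar\lambda^r(\bar\lambda-1)=-(k+1)(\lambda^r-\bar\lambda^r)$. These two expressions differ, so the reconciliation is not yet consistent.

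The main obstacle is therefore this last step: the sign bookkeeping in $\zeta(D)$. I will recheck $a_0,a_1$ for $D_i$ directly from Lemma \ref{lem_15}. There $\Trace D_0=x-k+y+1=l-k$, and $\Trace D_1$ is computed from the explicit $D_1$. I expect that direct computation yields $a_1-\bar\lambda a_0=(l-k)(\lambda-1)$ exactly, and that my sketched values for $\Trace(TST)$ need correcting by precisely the sign that produces this. The routine verifications of $a_1$ are what I must grind carefully.
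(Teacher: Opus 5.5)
Your route is the paper's: apply Lemma \ref{lem_7} to the trace sequences from Proposition \ref{prop_10}. Your initial values are all correct: $\Trace S=k-1$, $\Trace(STS)=k^2x+y=(k-1)t+k(l-1)$, $\Trace T=l-1$, and $\Trace(TST)=kx^2-y^2+(k-1)z^2=(l-1)t+l(k-1)$. However, the proposal does not establish the formula for $\zeta(D)$. When you subtract the two brackets you make a sign error, and the ``obstacle'' you then describe is an artifact of that error. Indeed
\[
\bigl(\lambda(l-1)+l(k-1)\bigr)-\bigl(\lambda(k-1)+k(l-1)\bigr)=(l-k)\lambda+\bigl(l(k-1)-k(l-1)\bigr)=(l-k)(\lambda-1),
\]
because $l(k-1)-k(l-1)=k-l$, not $l-k$. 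Equivalently, take $a_0=\Trace D_0=l-k$ and $a_1=\Trace D_1=\Trace(TST)-\Trace(STS)=(t-1)(l-k)$, which is exactly the value the paper uses. Then $a_1-\bar\lambda a_0=(l-k)(t-1-\bar\lambda)=(l-k)(\lambda-1)$, and Lemma \ref{lem_7} gives the stated formula at once.

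This affects your plan in two ways. First, the remedy you propose, correcting $\Trace(TST)$ by a sign, would be wrong. That value is right, and changing it would break the formula for $\zeta((NM)^iN)$. Second, the detour through $F_r(t)=0$ is unnecessary. The closed form holds for any $t$ with $\lambda\neq\bar\lambda$. The hypothesis that $t$ is a zero of $F_r$ enters only to make $\rho$ a representation and, via Remark \ref{rem_6}, to guarantee $\lambda\neq\bar\lambda$. Your observation that the $\lambda+1$ and $\lambda-1$ versions genuinely differ is correct, and it confirms that the slip, not the statement, was at fault. With this one-line fix your argument coincides with the paper's proof.
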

\begin{proof}
From Proposition \ref{prop_2bis} we have $\rho(M)=S$ and $\rho(N)=T$, where $S$ and $T$ are as in \eqref{eq_41}.
Recall also that we set $D_i=(TS)^iT-(ST)^iS$, $i= 0,1,\ldots$.
By direct computation, we have
\[
\Trace(S)=k-1, \ \  \Trace(T)=l-1, \ \ \Trace(ST)=\Trace(TS)=t,
\]
\[
\Trace(STS)=t(k-1)+k(l-1), \ \  \Trace(TST)=t(l-1)+l(k-1),
\]
\[
 {\Trace(D_0) = l-k, \ \ \Trace(D_1) = (t-1)(l-k) .  \ \ }
\]
By Proposition \ref{prop_10}, the sequences 
 \[
\{\Trace((ST)^i)\}_{i=0}^\infty,\ \{\Trace((ST)^iS)\}_{i=0}^\infty,\ \{\Trace((TS)^iT)\}_{i=0}^\infty \textrm{ and } \{\Trace(D_i)\}_{i=0}^\infty
\] 
 satisfy the recurrence formula \eqref{eq_13}, and, hence, the closed form expression \eqref{eq_22} 
\[
a_i=\frac{1}{\lambda-\bar\lambda}\left[\lambda^i(a_1-\bar\lambda a_0)-\bar\lambda^i(a_1-\lambda a_0)\right]
\]
 by Lemma \ref{lem_7}. 

Therefore, taking into account that $t=\lambda+\bar\lambda$ and $\lambda\bar\lambda=kl$, we get 
\[
\begin{split}
\zeta((MN)^i)=\Trace((ST)^i) 
& {=\frac{1}{\lambda-\bar\lambda}\left[\lambda^i(t-2\bar\lambda )-\bar\lambda^i(t-2\lambda )\right]}\\
&=\frac{1}{\lambda-\bar\lambda}[\lambda^i(\lambda-\bar\lambda)+\bar\lambda^i(\lambda-\bar\lambda)]\\
&=\lambda^i+\bar\lambda^i.
\end{split}
\]

 {The other cases can be obtained with a similar computation. For the last case, recall that $\rho(D)=D_{r}$.}
\end{proof}
\begin{lemma}\label{lem_8}
Let $t_1,\ldots, t_r$ be the zeros of $F_r(X)$, and set
\begin{equation}\label{eq_33}
p(X)=(X^2-kl)\prod_{j=1}^{r}{(X^2-t_jX+kl)}.
\end{equation}
Then, the zeros of $p(X)$ are all distinct and we have
\begin{equation}\label{eq_26}
p(X)=X^{2r+2}+kX^{2r+1}-k^{r+1}l^rX-k^{r+1}l^{r+1}.
\end{equation} 
\end{lemma}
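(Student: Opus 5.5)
Since both sides of \eqref{eq_26} are monic of degree $2r+2$, I will show that every zero of $p(X)$ given by \eqref{eq_33} is a zero of $q(X)=X^{2r+2}+kX^{2r+1}-k^{r+1}l^rX-k^{r+1}l^{r+1}$. Once I also show that these $2r+2$ zeros are pairwise distinct, equality $p=q$ follows.

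\emph{Distinctness.} The zeros of $X^2-kl$ are $\pm\sqrt{kl}$, which are real. For each $j$, Remark \ref{rem_6} says the zeros of $X^2-t_jX+kl$ are a non-real pair $\lambda_j,\bar\lambda_j$. Since $\lambda_j+\bar\lambda_j=t_j$ and the $t_j$ are pairwise distinct by Proposition \ref{prop_2}, different $j$ give disjoint pairs. Each pair is non-real, so it is disjoint from $\{\pm\sqrt{kl}\}$. Hence $p$ has $2r+2$ distinct zeros.

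\emph{Zeros of $q$.} First, for $X=\pm\sqrt{kl}$ we have $X^{2r+2}=k^{r+1}l^{r+1}$ and $kX^{2r+1}=k^{r+1}l^rX$, so $q(\pm\sqrt{kl})=0$. Next, let $\lambda$ be a zero of $X^2-tX+kl$ with $F_r(t)=0$, so $t=\lambda+\bar\lambda$ and $\bar\lambda=kl/\lambda$. The sequence $\{F_i(t)\}$ satisfies \eqref{eq_13} with $F_0=1$ and $F_1=t+k$. By Lemma \ref{lem_7},
\[
F_r(t)=\frac{1}{\lambda-\bar\lambda}\left[\lambda^r(\lambda+k)-\bar\lambda^r(\bar\lambda+k)\right],
\]
using $a_1-\bar\lambda a_0=\lambda+k$ and $a_1-\lambda a_0=\bar\lambda+k$. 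So $F_r(t)=0$ with $\lambda\ne\bar\lambda$ gives $\lambda^r(\lambda+k)=\bar\lambda^r(\bar\lambda+k)$. Substitute $\bar\lambda=kl/\lambda$ and multiply by $\lambda^{r+1}$:
\[
\lambda^{2r+1}(\lambda+k)=(kl)^r(kl+k\lambda),
\]
that is, $\lambda^{2r+2}+k\lambda^{2r+1}-k^{r+1}l^r\lambda-k^{r+1}l^{r+1}=0$. So $q(\lambda)=0$, and the same holds for $\bar\lambda$ by symmetry.

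This finishes the argument. The only step needing care is applying Lemma \ref{lem_7} to the sequence $\{F_i(t)\}$ and checking its initial terms; the rest is bookkeeping with $\lambda\bar\lambda=kl$.
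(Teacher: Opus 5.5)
Your proof is correct and is essentially the paper's argument: distinctness comes from the non-real conjugate pairs $\lambda_j,\bar\lambda_j$ with pairwise distinct sums $t_j$ (Remark \ref{rem_6}, Proposition \ref{prop_2}). The identity then comes from the closed form of Lemma \ref{lem_7} applied to $\{F_i(t_j)\}$, multiplication by $\lambda_j^{r+1}$ using $\lambda_j\bar\lambda_j=kl$, the check at $\pm\sqrt{kl}$, and comparison of two monic polynomials of degree $2r+2$. One cosmetic point: your name $q(X)$ for the right-hand side clashes with the different polynomial $q(X)$ of Section \ref{sec_6}; the paper calls it $\widetilde p(X)$.
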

\begin{proof}
 {For any zero $t_j$ of $F_r(X)$, let $\lambda_j$ and $\overline\lambda_j$ be the zeros of $X^2-t_jX+kl$, $j=1,\dots,r$.} By Remark \ref{rem_6}, $\lambda_j,\bar\lambda_j\neq\pm\sqrt{kl}$ and $\lambda_j\neq\bar\lambda_j$. Assume $\lambda_i=\lambda_j$ or $\lambda_i=\bar\lambda_j$ for some $i\neq j$. This implies $t_i=\lambda_i+\bar\lambda_i=\lambda_j+\bar\lambda_j=t_j$, which is not possible by Proposition \ref{prop_2}. Therefore, all zeroes of $p(X)$ are distinct.

By the definition \eqref{eq_7} of the sequence of polynomials $\{F_i(X)\}_{i=0}^\infty$, the sequence $\{F_i(t_j)\}_{i=0}^\infty$ satisfies the recurrence formula \eqref{eq_13}, whence it satisfies the closed form expression \eqref{eq_22} given by Lemma \ref{lem_7}. Therefore, taking into account that $F_0(t_j)=1$, $F_1(t_j)=t_j+k$ and $t_j=\lambda_j+\bar\lambda_j$, we have
\[
\begin{split}
0=(\lambda_j-\bar\lambda_j)F_r(t_j)
&=\lambda_j^r
  \bigl(F_1(t_j)-\bar\lambda_jF_0(t_j)\bigr)
 -\bar\lambda_j^r
  \bigl(F_1(t_j)-\lambda_jF_0(t_j)\bigr)\\
&=\lambda_j^r(\lambda_j+k)
 -\bar\lambda_j^r(\bar\lambda_j+k).
\end{split}
\]
 {By multiplying both sides of the latter equation by $\lambda_j^{r+1}$, and taking into account that $\lambda_j\bar\lambda_j=kl$, we see that $\lambda_j$ is a zero of the polynomial 
\[
\widetilde p(X)=X^{2r+2}+kX^{2r+1}-k^{r+1}l^rX-k^{r+1}l^{r+1}.
\] 
}
A very similar argument shows that  $\bar\lambda_j$ is also a zero of $\widetilde p(X)$.

It is easy to check that $\pm\sqrt{kl}$ are also zeros of $\widetilde p(X)$. Therefore, $p(X)$ and $\widetilde p(X)$  are monic polynomials with the same zeros, therefore they coincide.
\end{proof}

\begin{theorem}\label{th_5}
Let $t$ be a zero of $F_r(X)$ and  $U_t$ the irreducible  2-dimensional  representation of $\cA_d(k,l)$ associated with $t$. Then, the multiplicity $m_t$  of $U_t$ as a submodule of $\bC^E$ is
\begin{equation}\label{eq_21}
m_t=|E|\frac{(t^2-4kl)}{(t-kl-1)((2r+1)(t+k+l)+l-k)}.
\end{equation}
\end{theorem}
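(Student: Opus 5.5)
We plug the character values of Theorem \ref{th_4} into the orthogonality relation \eqref{eq_12}, with $\zeta(I)=2$, and evaluate the resulting finite sums in closed form. Since $t$ is real, $\bar\lambda$ is the complex conjugate of $\lambda$, so every character value is real and $\zeta(A)\overline{\zeta(A)}=\zeta(A)^2$. Writing $\mu=\lambda/\sqrt{kl}$, we have $|\mu|=1$, say $\mu=e^{i\theta}$ with $2\sqrt{kl}\cos\theta=t$. Then each term $\zeta(A_s)^2/\eta_s$ becomes a quadratic expression in $\mu^i,\bar\mu^i$ divided by a power of $kl$ that cancels the $k^il^i$ in $\eta_s$. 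For example, $\zeta((MN)^i)^2/(k^il^i)=(\mu^i+\bar\mu^i)^2=\mu^{2i}+\bar\mu^{2i}+2$. The terms $\zeta((MN)^iM)^2/(k^{i+1}l^i)$ and $\zeta((NM)^iN)^2/(k^il^{i+1})$ expand similarly, with coefficients $|\lambda(k-1)+k(l-1)|^2$ and so on divided by $|\lambda-\bar\lambda|^2=4kl-t^2$. Each sum over $i$ therefore splits into a constant part, which grows linearly in $r$, plus geometric sums $\sum_i\mu^{2i}$ and their conjugates.

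The key steps, in order, are as follows.
\begin{enumerate}
\item Write out all five groups of terms in \eqref{eq_12} in this form. Include the last term $\zeta(D)^2/(k^rl^r(l-k))$, which equals $(l-k)\,|\lambda^r(\lambda-1)-\bar\lambda^r(\bar\lambda-1)|^2/\bigl((4kl-t^2)k^rl^r\bigr)$ up to sign bookkeeping.
\item Sum the geometric series: $\sum_{i=0}^{r}\mu^{2i}=(\mu^{2r+2}-1)/(\mu^2-1)$, and similarly for the other ranges.
\item Eliminate the high powers $\lambda^{2r+2}$ using the relation coming from $F_r(t)=0$. By the proof of Lemma \ref{lem_8}, $\lambda^r(\lambda+k)=\bar\lambda^r(\bar\lambda+k)$, which gives
\[
\mu^{2r}=\frac{\bar\lambda+k}{\lambda+k},
\]
and equivalently $\lambda$ is a root of $p(X)$ in \eqref{eq_26}. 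Substituting this makes all oscillating parts rational functions of $\lambda,\bar\lambda$ alone.
\item Rewrite those rational functions in terms of $t=\lambda+\bar\lambda$ and $kl=\lambda\bar\lambda$.
\end{enumerate}

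After these reductions, the right-hand side of \eqref{eq_12} should become
\[
\frac{2(t-kl-1)\bigl((2r+1)(t+k+l)+l-k\bigr)}{t^2-4kl}.
\]
The factor $2r+1$ comes from the constant parts of the sums, which add up linearly in $r$; the remaining terms come from the geometric pieces once they are simplified via the relation above. Solving $\frac{2}{m_t}|E|=$ (this expression) gives \eqref{eq_21}.

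The main obstacle is the algebraic simplification in steps 3 and 4. The geometric sums produce denominators $\mu^2-1$, that is $\lambda^2-kl$, and these must cancel against numerators only after $\lambda^{2r}$ is replaced using $F_r(t)=0$. Exactly this cancellation turns the boundary term from $D$ and the three geometric sums into the tidy factor $(t-kl-1)(t+k+l)$, up to the linear-in-$r$ correction. To control it, I would first organise the computation by collecting coefficients of $\lambda^{2r}$, $\bar\lambda^{2r}$ and $1$ separately, and verify the identity symbolically for small $r$ (say $r=1,2$) to pin down the target form. Then I would do the general case by treating the expression as a polynomial identity in $\lambda$ modulo the minimal relation $\lambda^{r}(\lambda+k)(\bar\lambda^{r}\cdots)$, i.e.\ modulo $p(\lambda)=0$ with $\bar\lambda=kl/\lambda$. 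I expect this step to be handled by computer algebra in the paper, in line with its remark that the calculation reduces to a sum of rational functions.
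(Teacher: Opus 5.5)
Your proposal is correct and follows the paper's proof. You substitute the character values of Theorem~\ref{th_4} into the orthogonality relation, sum the geometric series, and eliminate $\lambda^{2r}$ via $p(\lambda)=0$, then hand the final rational-function simplification to computer algebra (MAGMA in the paper); your relation $\mu^{2r}=(\bar\lambda+k)/(\lambda+k)$ is equivalent to the paper's $\lambda^{2r+1}/(k^rl^r)=k(\lambda+l)/(\lambda+k)$, which gives $\sum_{i=0}^{r}\lambda^{2i}/(k^il^i)=k/(\lambda+k)$. After this substitution the expression is linear in $r$ with coefficients rational in $\lambda$ and $\bar\lambda=kl/\lambda$, so one symbolic verification settles the general case, and your small-$r$ experiments and further reduction modulo $p(X)$ are unnecessary.
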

\begin{proof}
Let $\zeta$ be the character of $U_t$. 
Note that all values of $\zeta$ listed in Theorem \ref{th_4} are fixed
by complex conjugation and hence are real numbers. Therefore,
Eq. \eqref{eq_12} becomes
\begin{equation}\label{eq_12bis}
\begin{split}
\frac{\zeta(I)}{m_t}|E|=&\zeta(I)^2+2\sum_{i=1}^{r}{\frac{\zeta((MN)^i)^2}{k^il^i}}+\frac{1}{k}\sum_{i=0}^{r}{\frac{\zeta((MN)^iM)^2}{k^{i}l^i}}\\& 
+\frac{1}{l}\sum_{i=0}^{r-1}{\frac{\zeta((NM)^iN)^2}{k^{i}l^{i}}}+\frac{1}{k^rl^r(l-k)}\zeta(D)^2.
\end{split}
\end{equation}

 {Let $\lambda$ and $\bar\lambda$ be the zeros of $X^2-tX+kl$, that is $\lambda+\bar\lambda=t$ and $\lambda\bar\lambda=kl$.
The main tool we use to evaluate Eq. \eqref{eq_12bis} is the following observation:} by Lemma \ref{lem_8}, $\lambda$ is a zero of the polynomial $p(X)$, so we have 
\[
0=p(\lambda)=\lambda^{2r+1}(\lambda+k)-k^{r+1}l^r(\lambda+l),
\]
which  {leads} to
\begin{equation*}
	 { \frac{\lambda^{2r+1}}{k^rl^r}=\frac{k(\lambda+l)}{\lambda+k} . }
\end{equation*}
 {Therefore, 
\begin{equation}\label{eq_38}
	\sum_{i=0}^{r}\frac{\lambda^{2i}}{k^il^i} 
	= \frac{\left(\frac{\lambda^{2}}{kl}\right)^{r+1}-1}{\frac{\lambda^{2}}{kl}-1}
	= \frac{\lambda\left(\frac{\lambda^{2r+1}}{k^rl^r}\right)-kl}{\lambda^2-kl} 
	= \frac{\frac{\lambda k(\lambda+l)}{\lambda+k}-kl}{\lambda^2-kl} 
	= \frac{k}{\lambda+k}
\end{equation}
We will take into account Theorem \ref{th_4}} and make repeated use of Eq. \eqref{eq_38} and of its conjugate in the following computations.
 
The first summation of Eq. \eqref{eq_12bis} becomes
\[
\sum_{i=0}^{r}{\frac{\zeta((MN)^i)^2}{k^il^i}} =\sum_{i=0}^{r}{\left(\frac{\lambda^{2i}}{k^il^i}+\frac{\bar\lambda^{2i}}{k^il^i}+2\right)}
=\frac{k}{\lambda+k}+\frac{k}{\bar\lambda+k}+2(r+1).
\]
To compute the other sums in Eq. \eqref{eq_12bis}, we use $(\lambda-\bar\lambda)^2  {= (\lambda + \bar\lambda)^2 - 4\lambda\bar\lambda} =t^2-4kl$.  Moreover, to keep the notation more compact, we set
\[
\begin{array}{c}
	a_{k,l}=k-1, \qquad b_{k,l}=k(l-1),\\[.2in]
	c_{k,l}=(\lambda a_{k,l}+b_{k,l})(\bar\lambda a_{k,l}+b_{k,l})=kl(k-1)^2+tk(k-1)(l-1)+k^2(l-1)^2.
\end{array}
\]
Hence we have:
\[
\begin{aligned}
\sum_{i=0}^{r}\frac{\zeta((MN)^iM)^2}{k^il^i}
&=\frac{1}{(\lambda-\bar\lambda)^2}
  \sum_{i=0}^{r}
  \frac{\left[
  \lambda^i(a_{k,l}\lambda+b_{k,l})
  -\bar\lambda^i(a_{k,l}\bar\lambda+b_{k,l})
  \right]^2}{k^il^i}\\
&=\frac{1}{t^2-4kl}\Bigg[
  (a_{k,l}\lambda+b_{k,l})^2
  \sum_{i=0}^{r}\frac{\lambda^{2i}}{k^il^i}\\
&\qquad+
  (a_{k,l}\bar\lambda+b_{k,l})^2
  \sum_{i=0}^{r}\frac{\bar\lambda^{2i}}{k^il^i}
  -2\sum_{i=0}^{r}c_{k,l}\Bigg]\\
&=\frac{1}{t^2-4kl}\Bigg[
  (a_{k,l}\lambda+b_{k,l})^2\frac{k}{\lambda+k}\\
&\qquad+
  (a_{k,l}\bar\lambda+b_{k,l})^2\frac{k}{\bar\lambda+k}
  -2(r+1)c_{k,l}\Bigg].
\end{aligned}
\]

Similarly,
\[
\begin{aligned}
\sum_{i=0}^{r-1}\frac{\zeta((NM)^iN)^2}{k^il^i}
=-\frac{1}{t^2-4kl}\Bigg[
&(a'_{k,l}\lambda+b'_{k,l})^2
  \frac{kl}{\lambda(\lambda+k)}\\
&+(a'_{k,l}\bar\lambda+b'_{k,l})^2
  \frac{kl}{\bar\lambda(\bar\lambda+k)}
  +2rc'_{k,l}\Bigg].
\end{aligned}
\] 
where 
\[
\begin{array}{c}
a'_{k,l}=l-1, \ \ \ \ b'_{k,l}=l(k-1),\\[.2in]
c'_{k,l}=(\lambda a'_{k,l}+b'_{k,l})(\bar\lambda a'_{k,l}+b'_{k,l})=kl(l-1)^2+tl(k-1)(l-1)+l^2(k-1)^2, 
\end{array}
\]
and
\[
\frac{\zeta(D)^2}{k^rl^r(l-k)}=\frac{l-k}{t^2-4kl}\left[\frac{k(\lambda+l)}{\lambda(\lambda+k)}(\lambda-1)^2+\frac{k(\bar\lambda+l)}{\bar\lambda(\bar\lambda+k)}(\bar\lambda-1)^2-2(\lambda-1)(\bar\lambda-1)\right].
\]
Finally, by plugging  all the above values in \eqref{eq_12bis}, with the assistance of the computer algebra system MAGMA \cite{magma}, we get the result.
\end{proof}
 {The following easy corollary of the previous theorem is the main tool we use in the following section.}
\begin{corollary}\label{cor_4}
Every irreducible factor of $F_r(X)$ has degree at most two over the integers.
\end{corollary}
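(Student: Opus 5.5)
The plan is a Galois-theoretic argument applied to the multiplicity formula \eqref{eq_21} of Theorem \ref{th_5}. The multiplicity $m_t$ is a positive integer, and the right-hand side of \eqref{eq_21} is a rational function of $t$ with integer coefficients, since $|E|$, $k$, $l$, $r$ are integers. Set
\[
R(X)=|E|\frac{X^2-4kl}{(X-kl-1)\bigl((2r+1)(X+k+l)+l-k\bigr)}\in\bQ(X).
\]
For every zero $t$ of $F_r(X)$ we have $R(t)=m_t\in\bZ$. The denominator does not vanish at such $t$, because the multiplicity formula was derived from the orthogonality relation and is finite. Alternatively, $t\neq kl+1$ by Corollary \ref{cor_5}, and the second linear factor gives a rational value, which would be handled below anyway.

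Now let $G(X)$ be an irreducible factor of $F_r(X)$ over $\bQ$; by Gauss' lemma this is the same as over $\bZ$, since $F_r$ is monic with integer coefficients. Let $t$ be one of its roots. Then $R(t)=m$ is a rational integer, so $t$ is a root of the polynomial
\[
Q_m(X)=|E|(X^2-4kl)-m(X-kl-1)\bigl((2r+1)(X+k+l)+l-k\bigr)\in\bQ[X].
\]
This polynomial has degree at most two. I must also check that it is not identically zero. Its value at $X=kl+1$ is $|E|((kl+1)^2-4kl)=|E|(kl-1)^2$, and this is nonzero because $l>k\ge 1$ forces $kl\ge 2$. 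Since $G$ is the minimal polynomial of $t$ and $Q_m(t)=0$ with $Q_m\neq 0$, $G$ divides $Q_m$, so $\deg G\le 2$.

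The only delicate point is the fact that the rational function takes an integer (hence rational) value at every root. This is exactly what makes each root algebraic of degree at most two. I also have to keep the nonvanishing of $Q_m$ in hand, and the check above settles it. Everything else is immediate from Theorem \ref{th_5}.
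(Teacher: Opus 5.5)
Your proof is correct and follows the paper's argument: you clear the denominator in the multiplicity formula \eqref{eq_21}, so that each zero $t$ of $F_r(X)$ is a root of a rational polynomial of degree at most two. Your extra checks fill in details the paper leaves implicit: the cleared polynomial is not identically zero, since its value at $X=kl+1$ is $|E|(kl-1)^2\neq 0$, and a vanishing denominator would in any case force $t$ to be rational.
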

\begin{proof}
Clearing the denominator in expression \eqref{eq_21}, we see that
every zero $t$ of $F_r(X)$ is a zero of a polynomial of degree at most
two over the integers.
\end{proof}
\section {The diameter of an unbalanced\\
distance-biregular graph with girth deficiency two}
\label{sec_6}

In this section we provide the feasible values for the diameter  {$d=2r+2$} of an unbalanced distance-biregular graph with girth deficiency two.  To facilitate the achievement of our goal, we introduce  the polynomial
\[
q(X)=lX^{2r+2}-kX^{2r+1}-2(l-k)X^{r+1}-kX+l.
\]
We will show that the zeros of $q(X)$ are related to the zeros of the polynomial $p(X)$ defined in \eqref{eq_33}, and, hence, depend on the zeros of $F_r(X)$ by Lemma \ref{lem_8}.
\begin{proposition}\label{prop_5}
Let $t_i$ be a zero of $F_r(X)$, and $\lambda_i$ a zero of $X^2-t_iX+kl$. Set $\widetilde\lambda_i=\lambda_i^2/(kl)$. Then, the zeros of $q(X)$ are $1$ (with multiplicity 2), and the $2r$ distinct numbers $\widetilde \lambda_i$ and  $\widetilde\lambda_i^{-1}$, for $i=1,\ldots,r$.
\end{proposition}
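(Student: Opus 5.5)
The plan is to relate $q$ to $p$ by the substitution $X\mapsto X^2/(kl)$, and then count roots. By Lemma \ref{lem_8}, $\lambda_i$ is a zero of $p(X)=X^{2r+1}(X+k)-k^{r+1}l^r(X+l)$. Equivalently,
\[
\frac{\lambda_i^{2r+1}}{k^rl^r}=\frac{k(\lambda_i+l)}{\lambda_i+k},
\]
and the same holds for $\bar\lambda_i$. I want to turn this into an identity in $\widetilde\lambda_i=\lambda_i^2/(kl)$.

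Write $\mu=\lambda_i$, $w=\mu^2/(kl)$ and $\nu=\bar\lambda_i=kl/\mu$. Then $\widetilde\lambda_i^{-1}=kl/\mu^2=\nu^2/(kl)$, so the inverse corresponds to the conjugate root. It therefore suffices to show $q(w)=0$ whenever $\mu\ne\pm\sqrt{kl}$ is a root of $p$.

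\begin{itemize}
\item From $p(\mu)=0$ we get $w^{r}\mu=k(\mu+l)/(\mu+k)$, so $w^{r+1}=\mu k(\mu+l)/(kl(\mu+k))$.
\item I will express $\mu$ through $w^{r+1}$: this is a Möbius-type relation, $kl(\mu+k)w^{r+1}=k\mu(\mu+l)$, which is quadratic in $\mu$.
\item The cleaner route is to plug directly into $q(w)=lw^{2r+2}-kw^{2r+1}-2(l-k)w^{r+1}-kw+l$, using $w^{r+1}=\mu(\mu+l)/(l(\mu+k))$ and $w^{2r+1}=w^{r+1}\cdot w^{r}$ with $w^r=k(\mu+l)/(\mu(\mu+k))$.
\item Then $w=\mu^2/(kl)$ is substituted and everything is cleared over the common denominator $l(\mu+k)^2$ (or similar). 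The result should be a multiple of $p(\mu)$, or should vanish identically after using $p(\mu)=0$.
\end{itemize}

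The explicit rational-function verification is the main computational obstacle. I would organise it so that only $w^r$ and $w$ appear, and check that the numerator collapses. A fallback is to verify the polynomial identity $(kl)^{2r+2}\,l\,q(X^2/(kl))=c\cdot p(X)\,p^*(X)$. Here $p^*(X)=X^{2r+2}p(kl/X)/(\text{const})$ is the reciprocal-type polynomial whose roots are $kl/\lambda$. This identity can be checked coefficientwise, since both sides are sparse: $q$ has five terms and $p$ has four. Comparing degrees, $\deg q(X^2/kl)=4r+4=2\deg p$, which makes the factorization plausible.

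\textbf{Counting the roots.} Note that $q(1)=l-k-2(l-k)-k+l=0$. Also $q'(1)=(2r+2)l-(2r+1)k-2(r+1)(l-k)-k=0$, so $1$ is at least a double root. Moreover $q$ is self-reciprocal: $X^{2r+2}q(1/X)=q(X)$. Hence zeros come in pairs $w,w^{-1}$, consistent with the first part of the argument.

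The roots $\pm\sqrt{kl}$ of $p$ give $w=1$. The $\lambda_i,\bar\lambda_i$ give $\widetilde\lambda_i$ and $\widetilde\lambda_i^{-1}$. These $2r$ values are distinct and different from $1$, for the following reasons.
\begin{itemize}
\item $\widetilde\lambda_i=\widetilde\lambda_j^{\pm1}$ forces $\lambda_i=\pm\lambda_j$ or $\lambda_i=\pm\bar\lambda_j$. This gives $t_i=\pm t_j$; the case $t_i=t_j$ is excluded by Proposition \ref{prop_2}.
\item The case $t_i=-t_j$ would need separate exclusion. I would handle it via $p(\mu)=0$ and $p(-\mu)=0$, which together force $\mu+k$ and $\mu+l$ relations giving $l=k$, contradicting Corollary \ref{cor_2}.
\item $\widetilde\lambda_i\ne1$ because $\lambda_i\notin\mathbb R$, since $|t_i|<2\sqrt{kl}$.
\item $\widetilde\lambda_i\ne\widetilde\lambda_i^{-1}$ because $\lambda_i^4\ne k^2l^2$.
\end{itemize}

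Together this gives $2+2r=\deg q$ roots counted with multiplicity, so the list is complete and $1$ has multiplicity exactly $2$.
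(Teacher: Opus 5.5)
Your overall plan is sound, and the counting half (degree $2r+2$, $q(1)=q'(1)=0$, distinctness via $p(\mu)=p(-\mu)=0$) is the paper's argument. The first half takes a different route, and your primary route does go through, though you leave it unexecuted. From $w^r=k(\mu+l)/(\mu(\mu+k))$ and $w=\mu^2/(kl)$ (here $\mu\neq 0,-k$ because $\mu\notin\mathbb R$), multiplying $q(w)$ by $l(\mu+k)^2$ gives
\[
(\mu+l)^2(\mu^2-k^2)-(\mu+k)^2(\mu^2-l^2)-2(l-k)\mu(\mu+k)(\mu+l)
=(\mu+k)(\mu+l)\bigl[(\mu+l)(\mu-k)-(\mu+k)(\mu-l)-2(l-k)\mu\bigr]=0 .
\]
The paper instead rescales $Y=X/\sqrt{kl}$, writes $p(\lambda)=0$ as $l(Y^{2r+2}-1)=-\sqrt{kl}\,Y(Y^{2r}-1)$, and squares. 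Your computation avoids the irrational rescaling and is just as short.

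Your fallback identity, however, is wrong. The root set of $p$ is closed under $\lambda\mapsto kl/\lambda$: this map sends $\lambda_j$ to $\bar\lambda_j$ and fixes $\pm\sqrt{kl}$. Explicitly, $X^{2r+2}p(kl/X)=-(kl)^{r+1}p(X)$. So your $p^*$ is a scalar multiple of $p$, and the identity would assert $q(X^2/(kl))\propto p(X)^2$. That is impossible: the left side is even in $X$, while $p(X)^2$ has $X^{4r+3}$-coefficient $2k\neq 0$. The correct identity is
\[
(kl)^{2r+2}\,q\bigl(X^2/(kl)\bigr)=l\,p(X)\,p(-X).
\]
This is exactly what the paper's squaring step encodes. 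Write $p=A+B$ with $A$ even and $B$ odd; then squaring $A=-B$ gives $A^2-B^2=p(X)p(-X)$. Used this way, the identity gives the whole root multiset of $q$ at once: it is $\{\mu^2/(kl)\}$ over the roots $\mu$ of $p$, with $\pm\sqrt{kl}$ producing the double root $1$.

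Two smaller corrections are needed in the distinctness step.
\begin{itemize}
\item In the case $t_i=-t_j$, the equations $p(\mu)=p(-\mu)=0$ do not give $l=k$; that comes from a sign slip. Adding and subtracting them gives $\mu^{2r+2}=(kl)^{r+1}$ and $\mu^{2r}=(kl)^r$, hence $\mu^2=kl$. The contradiction is therefore with $\mu\notin\mathbb R$, not with Corollary \ref{cor_2}.
\item The claim $\widetilde\lambda_i\neq\widetilde\lambda_i^{-1}$ needs an argument that $\lambda_i^2\neq -kl$. If $\lambda_i^2=-kl$, then $\bar\lambda_i=kl/\lambda_i=-\lambda_i$, so $t_i=0$, which Proposition \ref{prop_2} excludes.
\end{itemize}
With these fixes your proof is complete.
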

\begin{proof}
Set $Y=X/\sqrt{kl}$. By applying this change of variable in \eqref{eq_26},  {for any zero $\lambda_i$ of $p(X)$ we obtain} that $\lambda'_i=\lambda_i/\sqrt{kl}$ is a zero of
\[
  {\frac{p(\sqrt{kl}Y)}{k^{r+1}l^r}=} lY^{2r+2}+\sqrt{kl}Y^{2r+1}-\sqrt{kl}Y-l,
\]
that is,
\[
l({\lambda'_i}^{2r+2}-1)=-\sqrt{kl}{\lambda'_i}({\lambda'_i}^{2r}-1).
\]
Squaring both sides, we obtain
\[
l({\lambda'_i}^{4r+4}-2{\lambda'_i}^{2r+2}+1)=k{\lambda'_i}^2({\lambda'_i}^{4r}-2{\lambda'_i}^{2r}+1),
\] 
giving that $\lambda'_i$ is a zero of
\[
 {q(X^2)=}lX^{4r+4}-kX^{4r+2}-2(l-k)X^{2r+2}-kX^2+l.
\]

From here it is easily seen that $\widetilde\lambda_i$ is a zero of
$q(X)$, since $\widetilde\lambda_i={\lambda'_i}^2$.

 {By direct computation, 1 is a zero of both $q(X)$ and its first derivative. So, to finish the proof it is enough to show that $\widetilde \lambda_1, \widetilde\lambda_1^{-1},\dots, \widetilde \lambda_r, \widetilde\lambda_r^{-1}$ are distinct numbers.}

Let $t_i$ be a zero of $F_r(X)$, and  $\lambda_i$ and $\bar\lambda_i$ the zeros of $X^2-t_iX+kl$. Note that $\overline{\widetilde\lambda_i}=\widetilde\lambda_i^{-1}$ since $\lambda_i\bar\lambda_i=kl$.
By previous arguments, $\widetilde\lambda_i$ and $\widetilde\lambda_i^{-1}$ are zeros of $q(X)$. Assume $\widetilde\lambda_i=\widetilde\lambda_i^{-1}$, i.e., $\widetilde\lambda_i\in\{\pm 1\}$, thus $\lambda_i^2\in\{\pm kl\}$  as $\widetilde\lambda_i=\lambda_i^2/(kl)$. Then, $\lambda_i^2=-kl$ leads to $t_i=0$, which is not possible by Proposition \ref{prop_2}, and $\lambda_i^2=kl$ leads to $t_i^2=4kl$, which is not possible by Remark \ref{rem_6}.

 Assume that $\widetilde\lambda_j=\widetilde\lambda_i$ or $\widetilde\lambda_j = \widetilde\lambda_i^{-1}=\overline{\widetilde\lambda_i}$, for some $i\neq j$. Since the roots of $p(X)$ are all distinct, this means that either $\lambda_j=-\lambda_i$ or $\lambda_j=-\bar\lambda_i$. In any case, this gives that both $\lambda_i$ and $-\lambda_i$ are zeros of $p(X)$. The simultaneous solution of $p(\lambda_i)+p(-\lambda_i)=0$ and $p(\lambda_i)-p(-\lambda_i)=0$ leads to $\lambda_i^2=kl$, which we already excluded.
This finishes the proof.
\end{proof}
 Since we are considering unbalanced distance-biregular graphs with
diameter $d=2r+2$ and girth deficiency two, and since the degree of
$q(X)$ is precisely $d$, it is essential to study the factorization
of this polynomial over the integers to obtain as much information as
possible about $d$. This is the goal of the next lemmas.
By $\bZ$, $\bQ$ and $\bF_{s}$, $s=p^h$ a prime power, we denote the ring of integers, the field of the rationals and the field with $s$ elements, respectively. 

Given a polynomial $f(X)=a_nX^n+\cdots+a_0$, its {\em reciprocal polynomial}  is the polynomial
\[
f^*(X)= X^{n}f(1/X)=a_0X^n+a_{1}X^{n-1}+\ldots+ a_{n-1}X+a_n.
\]
A polynomial $f(X)$ is said to be {\em palindromic} (or {\em self-reciprocal}) if $f(X)= f^*(X)$. It is evident that the coefficients of a palindromic polynomial satisfy $a_i=a_{n-i}$, for all $i$. 

In the next two lemmas we use some basic facts about field extensions;
see \cite[Chapter 15]{art}.
\begin{lemma}\label{prop_6}
Let $\Gamma$ be an unbalanced distance-biregular graph with diameter $d=2r+2$, girth deficiency two and vertex degrees $k+1$ and $l+1$. Let 
\[q(X)=m(X-1)^2q_1(X)\cdots q_n(X),\]
$m\in \bZ$, be the irreducible factorization of $q(X)$ over $\bZ$. Then, $\deg\, q_i(X)\in\{2,4\}$ and $q_i(X)$ is palindromic for each $i=1,\dots,n$.
\end{lemma}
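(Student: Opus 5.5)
Since $q(X)$ has integer coefficients and is palindromic ($a_i=a_{2r+2-i}$), I will work with the roots. By Proposition \ref{prop_5}, the roots of $q$ other than $1$ are the $2r$ distinct numbers $\widetilde\lambda_i$ and $\widetilde\lambda_i^{-1}=\overline{\widetilde\lambda_i}$. None of these equals $\pm1$, so $(X-1)^2$ accounts for the factor at $1$ and every $q_i$ has only roots of the form $\widetilde\lambda_j^{\pm1}$.

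\textbf{Palindromicity.} If $\alpha$ is a root of $q_i$, then $\alpha^{-1}$ is a root of $q$, because $q^*=q$. The reciprocal $q_i^*$ is irreducible over $\bZ$ and divides $q^*=q$. I claim $q_i^*=\pm q_i$. If not, $q_i$ and $q_i^*$ are distinct irreducible factors. But $\alpha^{-1}=\bar\alpha$, and $\bar\alpha$ is a root of $q_i$ because $q_i$ has real coefficients. So $q_i$ and $q_i^*$ share the root $\bar\alpha$, which forces them to be associates since they are irreducible. Hence $q_i^*=\pm q_i$. The sign $-$ would make $1$ a root of $q_i$, which is impossible because the roots of $q$ in the $q_i$'s are never $1$. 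So $q_i$ is palindromic.

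\textbf{Degree bound.} I pass to fields. Let $\alpha=\widetilde\lambda_j=\lambda_j^2/(kl)$ be a root of $q_i$. Corollary \ref{cor_4} gives $[\bQ(t_j):\bQ]\le2$. The element $\lambda_j$ is a root of $X^2-t_jX+kl$ over $\bQ(t_j)$, so $[\bQ(\lambda_j):\bQ]\le4$. Since $\alpha\in\bQ(\lambda_j)$, we get $\deg q_i=[\bQ(\alpha):\bQ]$, which divides $[\bQ(\lambda_j):\bQ]$ and therefore lies in $\{1,2,4\}$. Degree $1$ is excluded: $\alpha$ is non-real, since $|\alpha|=1$ and $\alpha\ne\pm1$. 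Degree $3$ is excluded because $3\nmid 4$.

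The main subtlety is this divisibility step. It holds because $\bQ(\alpha)\subseteq\bQ(\lambda_j)$, which has degree dividing $4$ over $\bQ$. I also need $q_i$ to be primitive up to the constant $m$, and irreducibility over $\bZ$ agrees with irreducibility over $\bQ$ by Gauss's lemma. With these two points, $\deg q_i\in\{2,4\}$ follows.
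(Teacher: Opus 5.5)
Your proof is correct and follows essentially the paper's argument: the degree tower $\bQ\subseteq\bQ(t_j)\subseteq\bQ(\lambda_j)$ via Corollary \ref{cor_4} bounds $\deg q_i$, non-reality of the root rules out degree $1$, and the fact that $q_i$ and $q_i^*$ share a root gives palindromicity. The differences are cosmetic. You get non-reality from $|\widetilde\lambda_j|=1$ and $\widetilde\lambda_j\neq\pm1$ rather than from $\mathrm{Re}\,\lambda\neq 0$, and you fix the scalar in $q_i^*=c\,q_i$ by excluding $c=-1$ through $q_i(1)\neq 0$, where the paper uses Vi\`ete's formula.
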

\begin{proof}
Since the greatest common divisor of the coefficients of $q_i(X)$ is
one, $q_i(X)$ is primitive for each $i=1,\ldots,n$. Hence, by Gauss's Lemma, it is irreducible over $\bQ$.
Let $t$ be any zero of $F_r(X)$, and $\lambda$  {be} a zero of $X^2-tX+kl$. Then, $\widetilde\lambda=\lambda^2/(kl)$ is a zero of $q(X)$ by Proposition \ref{prop_5}.

By Corollary \ref{cor_4}, $t$ is a root of a degree-$2$ polynomial with integer coefficients, and hence of a polynomial over $\bQ$.
 This gives $[\bQ(t):\bQ]\in \{1,2\}$, and therefore,  $[\bQ(\lambda):\bQ]\in \{1,2,4\}$.
Since $\widetilde\lambda\in \bQ(\lambda)$, then $[\bQ(\widetilde\lambda):\bQ]$ divides $[\bQ(\lambda):\bQ]$.
This implies that the minimal polynomial of $\widetilde\lambda$ over $\bQ$ has degree in $\{1,2,4\}$ and divides $q(X)$.
By Remark \ref{rem_6} the real part of $\lambda$ is non-zero, so $\widetilde \lambda\in\bC\setminus\bR$. This yields  $\deg\, q_i(X)\in\{2,4\}$.

Clearly $q(X)$ is palindromic. Let $\widetilde\lambda$ be a zero of  {$q_i(X)=a_0X^{n_i}+\dots+a_{n_i}$, $n_i=\deg\,q_i(X)$. Since $\widetilde{\lambda}\notin \bR$}, then $\widetilde\lambda^{-1}=\bar{\widetilde\lambda}\neq \widetilde\lambda$ is also a zero of $q_i(X)$. By Vi\`ete's Formula,  {$a_{n_i}/a_0$ is equal to the product of all the zeros of $q_i(X)$, and this} yields $a_0=a_{n_i}$.

Let $q_i^*(X)$ be the reciprocal polynomial of $q_i(X)$. We have 
\[
q_i^*(\widetilde \lambda)={\widetilde \lambda}^{n_i}q_i({\widetilde \lambda}^{-1})={\widetilde \lambda}^{n_i}q_i(\,\overline{\widetilde \lambda}\,)={\widetilde \lambda}^{n_i}\overline{q_i(\widetilde \lambda)}=0.
\]
Thus $q_i^*(X)$ and $q_i(X)$ have a common zero. Since $q_i(X)$ is
irreducible over $\bQ$ and the two polynomials have the same degree,
they are scalar multiples of one another. Their leading coefficients
are equal, so $q_i^*(X)=q_i(X)$.
\end{proof}
 In general, determining the degrees of the irreducible factors of a polynomial over the integers is a difficult problem. Nevertheless, the same problem is easier to tackle over finite fields. Thus, in the following we will determine the properties of the reduction of $q(X)$ modulo $p$, for various prime numbers $p$.
For any integer $n\in\bZ$ we denote by $\widehat n\in\bZ_p$ its reduction modulo $p$. Similarly, for any polynomial $P(X)\in \bZ[X]$, its reduction modulo $p$ will be denoted by $\widehat P(X)\in \bZ_p[X]$.
\begin{lemma}\label{lem_14}
Let $\Gamma$ be an unbalanced distance-biregular graph with diameter $d=2r+2$, girth deficiency two  and vertex degrees $k+1$ and $l+1$, and let $q(X)=m(X-1)^2q_1(X)\cdots q_n(X)$, $m\in\bZ$, be the irreducible factorization of $q(X)$  over $\bZ$. Then, for any prime $p$ and any $i=1,\ldots,n$, the following hold:
\begin{itemize}
\item[1.] the splitting field of $\widehat q_i(X)\in\bZ_p[X]$ is a subfield of $\bF_{p^4}$, 
\item[2.] any nonzero root of $\widehat q_i(X)$ is a zero of $(X^{p^2-1}-1)(X^{p^2+1}-1)$.
\end{itemize}
\end{lemma}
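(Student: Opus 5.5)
By Lemma \ref{prop_6}, each $q_i(X)$ is an irreducible palindromic polynomial over $\bZ$ of degree $n_i\in\{2,4\}$. The plan is to reduce such a factor modulo $p$ and use two facts: every irreducible factor of $\widehat q_i(X)$ over $\bZ_p$ has degree at most $4$, and palindromicity pairs each root with its inverse.

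\emph{Part 1.} Write $\widehat q_i(X)=\prod_j g_j(X)$ with each $g_j$ irreducible over $\bZ_p$. If the leading coefficient of $q_i$ vanishes mod $p$, then $\deg\widehat q_i<n_i$. In every case $\deg g_j\le 4$, so $\deg g_j\in\{1,2,3,4\}$.

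Degree $3$ would force $n_i=4$ together with a linear factor, or else a drop in degree. I intend to exclude it with palindromicity. Since $q_i=q_i^*$, we get $X^{n_i}\widehat q_i(1/X)=\widehat q_i(X)$, so the nonzero roots of $\widehat q_i$ are closed under $\alpha\mapsto\alpha^{-1}$, with multiplicities preserved. Suppose $n_i=4$ and $\widehat q_i=g\cdot h$ with $\deg g=3$ and $\deg h=1$. The linear root $\beta$ has inverse $\beta^{-1}$, which is either $\beta$ itself or lies in $\bZ_p$. The roots of the cubic lie in $\bF_{p^3}\setminus\bZ_p$, and the inverse map sends them to roots of the reciprocal cubic $g^*$. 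Hence $g^*$ must be an irreducible cubic dividing $\widehat q_i$ other than through $h$, so $g^*\sim g$. That leaves the three cubic roots closed under inversion, so one of them is fixed, i.e.\ equals $\pm1\in\bZ_p$, a contradiction.

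A degree drop works the same way. Since $a_0=a_{n_i}$, the leading coefficient vanishes exactly when the constant term does. In that case $X\mid\widehat q_i$ and the degree falls by at least $2$, so the remaining factor has degree at most $2$.

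It follows that every $g_j$ has degree $1$, $2$ or $4$. All of these splitting fields embed in $\bF_{p^4}$, and so does their compositum, which gives Part 1.

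\emph{Part 2.} Let $\alpha\ne0$ be a root of $\widehat q_i$. By Part 1, $\alpha\in\bF_{p^4}$. I split into two cases according to whether $\alpha^{-1}$ is a Galois conjugate of $\alpha$.

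If $\alpha\in\bF_{p^2}$, then $\alpha^{p^2-1}=1$ and we are done. Otherwise $\alpha$ has degree $4$, its minimal polynomial $g$ is one of the quartic factors, and $\widehat q_i=g$ up to a scalar since $n_i\le4$. Palindromicity gives $\alpha^{-1}$ as a root of $g$. The roots of $g$ are $\alpha,\alpha^p,\alpha^{p^2},\alpha^{p^3}$, so $\alpha^{-1}=\alpha^{p^j}$ for some $j$.

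\begin{itemize}
\item[] $j=0$ would give $\alpha^2=1$, placing $\alpha$ in $\bZ_p$, which is impossible.
\item[] $j=1$ gives $\alpha^{p+1}=1$. Applying Frobenius, $\alpha^{p(p+1)}=1$, and combining the two relations yields $\alpha^{p^2-1}=1$, so $\alpha\in\bF_{p^2}$, a contradiction. The case $j=3$ is symmetric.
\item[] $j=2$ gives $\alpha^{p^2+1}=1$, which is the desired conclusion.
\end{itemize}

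Thus in every case $\alpha$ is a root of $(X^{p^2-1}-1)(X^{p^2+1}-1)$.

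The main obstacle is Part 1, in particular the careful exclusion of cubic factors. That step needs the inverse-closure argument to be applied with multiplicities, and it must also handle the possible degree drop of the reduction.
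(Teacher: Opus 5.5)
Your proof is correct and follows the same route as the paper. Palindromicity of $\widehat q_i$ forces its irreducible factors over $\bZ_p$ to have degree $1$, $2$ or $4$, and a root outside $\bF_{p^2}$ must then satisfy $\alpha^{-1}=\alpha^{p^2}$. You also supply details the paper only asserts: the exclusion of a cubic factor, the possible drop in degree of the reduction, and the elimination of the cases $\alpha^{-1}=\alpha^{p}$ and $\alpha^{-1}=\alpha^{p^3}$.
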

\begin{proof}
By Lemma \ref{prop_6}, $q_i(X)$ is palindromic with $\deg\,q_i(X)\in\{2,4\}$.   Clearly, $\widehat q_i(X)$ is also palindromic with $\deg\, \widehat q_i(X)\leq \deg\,q_i(X)$. This implies that the zeros of $\widehat q_i(X)$  are either 1 or $-1$  or they come in pairs $(\alpha,1/\alpha)$. Consequently,  the non-constant polynomials of the irreducible factorization of $\widehat q_i(X)$ over $\Fp$ necessarily have degree 1, 2 or 4.

Let $z\in \bF_{p^4}$ be a nonzero zero of  {$\widehat q_i(X)$}. If the splitting field of $\widehat q_i(X)$ is a subfield of $\bF_{p^2}$, then $z^{p^2-1}=1$. Otherwise, $\widehat q_i(X)$ is an irreducible polynomial of degree 4 over $\Fp$, and   its zeros are $z,z^p,z^{p^2}, z^{p^3}$.  Since  $\widehat q_i(X)$ is palindromic then $z^{-1}$ is also a zero of $\widehat q_i(X)$, and it is easy to check that the only possibility is  $z^{-1}=z^{p^2}$, i.e. $z^{p^2+1}=1$.
\end{proof}
We are now in position to give  constraints on the diameter of an unbalanced distance-biregular graph of girth deficiency two: this is done  in  Lemma \ref{lem_12}, Corollary \ref{cor_6} and Proposition \ref{prop_9}.  

Let $p$ be a prime number. Then, any non-zero integer $n\in\bZ$ can be factored as $n=p^as$ for some $a\geq 0$ and $p\nmid s$. With this notation, the $p${\em -adic valuation of} $n$ is $\nu_p(n)=a$. Note that $s=n/p^{\nu_p(n)}$.
For any  {prime $p$,} we define the set
\[
U_p=\left\{n\in\bZ\setminus\{0\}: \frac{n}{p^{\nu_p(n)}} \mathrm{\ divides\ either\ }p^2-1 \mathrm{\ or\ }p^2+1 \right\}.
\]
Let  {$m=\GCD(k,l)$, $L=l/m$, $K=k/m$ and set}
\begin{equation}\label{eq_44}
	Q(X)=q(X)/m=LX^{2r+2}-KX^{2r+1}-2(L-K)X^{r+1}-KX+ L.
\end{equation}
The polynomials $Q(X)$ and $q(X)$ have the same zeros, and $\GCD(K,L)=1$, so $K$, $L$ and $L-K$ are pairwise coprime.

\begin{lemma}\label{lem_12}
Let $\Gamma$ be an unbalanced distance-biregular graph with diameter $d=2r+2$, girth deficiency two  and vertex degrees $\beta_0^A=mK+1$ and $\beta_0^B=mL+1$, with $\GCD(K,L)=1$. Then, for any prime   $p$ the following hold: 
\begin{enumerate}
\item\label{con_1} If $p|L$, then $r\in U_p$;
\item\label{con_2} If $p|K$, then $r+1\in U_p$;
\item\label{con_3} If $p|L-K$, then $2r+1\in U_p$.
\end{enumerate}
\end{lemma}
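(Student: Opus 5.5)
The plan is to reduce $Q(X)$ from \eqref{eq_44} modulo $p$ and to use Lemma \ref{lem_14}, which says that every nonzero root of each $\widehat q_i(X)$ satisfies $X^{p^2-1}=1$ or $X^{p^2+1}=1$. In each of the three cases, the reduction $\widehat Q(X)$ collapses to a constant times a product of factors of the form $X^n-1$, possibly with a power of $X$ and a factor $X-1$. A primitive root of unity of the appropriate order is then forced to be a root of some $\widehat q_i$, and that pins down its order.

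First I set up the factorization. Since $q(X)=m'(X-1)^2q_1(X)\cdots q_n(X)$ over $\bZ$ and $Q=q/m$ has content one, we get $Q(X)=c\,(X-1)^2q_1(X)\cdots q_n(X)$ for some nonzero integer $c$. Reducing mod $p$ gives
\[
\widehat Q(X)=\widehat c\,(X-1)^2\widehat q_1(X)\cdots\widehat q_n(X).
\]
Hence any root $\zeta\neq 0,1$ of $\widehat Q$ in an algebraic closure of $\Fp$ is a root of some $\widehat q_i$. By Lemma \ref{lem_14}, Part 2, such a $\zeta$ satisfies $\zeta^{p^2-1}=1$ or $\zeta^{p^2+1}=1$. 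If $\zeta$ is a primitive $s$-th root of unity, this means $s\mid p^2-1$ or $s\mid p^2+1$.

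Next I compute the reductions, using that $K$, $L$ and $L-K$ are pairwise coprime.
\begin{itemize}
\item[1.] If $p\mid L$, then $\widehat K\neq 0$ and $\widehat Q(X)=-\widehat K X(X^{2r}-2X^r+1)=-\widehat K X(X^r-1)^2$.
\item[2.] If $p\mid K$, then $\widehat L\neq0$ and $\widehat Q(X)=\widehat L(X^{r+1}-1)^2$.
\item[3.] If $p\mid L-K$, then $\widehat L=\widehat K\neq 0$ and $\widehat Q(X)=\widehat L(X^{2r+2}-X^{2r+1}-X+1)=\widehat L(X-1)(X^{2r+1}-1)$.
\end{itemize}
So in each case $X^N-1$ divides $\widehat Q(X)$, with $N=r$, $r+1$, $2r+1$ respectively.

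To finish, write $N=p^{\nu_p(N)}s$ with $p\nmid s$. Since $X^N-1=(X^s-1)^{p^{\nu_p(N)}}$ in characteristic $p$, and $X^s-1$ is separable, the algebraic closure contains a primitive $s$-th root of unity $\zeta$ that is a root of $\widehat Q$. If $s=1$, then $s$ trivially divides $p^2-1$. If $s>1$, then $\zeta\neq0,1$, so by the first step $s\mid p^2-1$ or $s\mid p^2+1$. Either way $N\in U_p$, which gives the three claims.

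The only delicate point is the bookkeeping of the content and leading coefficients: the leading coefficient of $\widehat Q$ may drop, but the factorization of $\widehat Q$ is still a polynomial identity, which is all the argument needs. The computation itself is routine.
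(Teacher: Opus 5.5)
Your proof is correct and follows the paper's argument. In each of the three cases you reduce $Q(X)$ modulo $p$ to a nonzero constant times $X(X^r-1)^2$, $(X^{r+1}-1)^2$, or $(X-1)(X^{2r+1}-1)$, take a primitive root of unity whose order is the $p$-free part of $N$, and apply Lemma~\ref{lem_14}, Part~2. You are in fact slightly more careful than the paper: it does not spell out why such a root, being different from $0$ and $1$ when $s>1$, must be a root of some $\widehat q_i(X)$, and it does not treat the trivial case $s=1$.
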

\begin{proof}

If $p|L$,  { i.e. $\widehat L=0$, then the reduction modulo $p$ of $Q(X)$ is
\[
 \widehat Q(X)= -\widehat K X^{2r+1}+ 2\widehat KX^{r+1}-\widehat KX = -\widehat KX(X^{r}-1)^2 = -\widehat KX(X^{\frac{r}{p^{\nu}}}-1)^{2p^{\nu}}\in \Fp[X],  
\]
where $\nu = \nu_p(r)$.}
Let $\eta$ be a primitive $r/p^{\nu}$-th root of unity in the algebraic closure of $\Fp$. Obviously, $\widehat Q(\eta)=0$, and Lemma \ref{lem_14}.2, implies that $\frac{r}{p^{\nu}}$ divides either $p^2-1$ or $p^2+1$. 

Very similar arguments apply to the other cases by taking into account that  {if $p|K$, then $\widehat Q(X)=\widehat L(X^{\frac{r+1}{p^{\nu}}}-1)^{2p^{\nu}}$ where $\nu =\nu_p(r+1)$, and if $p|L-K$, then $\widehat Q(X)=\widehat L (X-1) (X^{\frac{2r+1}{p^{\nu}}}-1)^{p^{\nu}}$ where $\nu =\nu_p(2r+1)$.}
\end{proof}
As an example we give an application of the previous Lemma in the case $p=2$.
\begin{corollary}\label{cor_6}
Let $\Gamma$ be an unbalanced distance-biregular graph with diameter $d=2r+2$, girth deficiency two  and vertex degrees $\beta_0^A=mK+1$ and $\beta_0^B=mL+1$, with $\GCD(K,L)=1$.
	If $L$ and $K$ are both odd, then the diameter of $\Gamma$ is either $4$ or $6$.
\end{corollary}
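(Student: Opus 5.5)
We apply Lemma \ref{lem_12} with $p=2$. The key fact is that for $p=2$ we have $p^2-1=3$ and $p^2+1=5$. Hence $n\in U_2$ exactly when the odd part $n/2^{\nu_2(n)}$ of $n$ divides $3$ or $5$, that is, when the odd part lies in $\{1,3,5\}$.

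Since $K$ and $L$ are coprime and both odd, $L-K$ is even. So $2\mid L-K$, and item \ref{con_3} of Lemma \ref{lem_12} gives $2r+1\in U_2$. As $2r+1$ is odd, it equals its own odd part, so $2r+1\in\{1,3,5\}$ and therefore $r\in\{0,1,2\}$.

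By the standing convention $r\ge1$, this leaves $r=1$ or $r=2$, i.e. $d=2r+2\in\{4,6\}$.

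There is no real obstacle. The only points to check are that $p=2$ really makes $L-K$ divisible by $p$ (this uses the parity assumption), and that the definition of $U_p$ is read correctly: for odd $n$ the odd part is $n$ itself. Items \ref{con_1} and \ref{con_2} are not needed, since neither $K$ nor $L$ is even.
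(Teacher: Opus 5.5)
Your proof is correct and is the paper's argument: since $K,L$ are odd, $2\mid L-K$, so Lemma \ref{lem_12}, Part 3, gives $2r+1\in U_2$. As $2r+1$ is odd, it must divide $3$ or $5$, which with $r\ge 1$ forces $r\in\{1,2\}$, i.e.\ $d\in\{4,6\}$.
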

\begin{proof}
	If $L$ and $K$ are both odd then $L-K$ is even, that is $2| L-K$. By Lemma \ref{lem_12}, $2r+1\in U_2$. Since $2r+1$ is odd for any $r\in\bZ$, that is $\nu_2(2r+1)=0$, we have that $2r+1$ divides either $2^2-1=3$ or $2^2+1=5$.  {This yields} $r\leq 2$ and the result follows.
\end{proof}
\begin{remark}
{\em Note that, when Lemma \ref{lem_12} is applied to a single prime divisor $p$ of $KL(L-K)$ it does not  give any bound on the diameter since $U_p$ is an infinite set; in fact,  if $n\in U_p$ then  $n'=np^a\in U_p$, for all  positive integers $a$. Focusing instead on pairs of primes yields significant bounds.}
\end{remark}

\begin{lemma}\label{lem_17}
If $p$ and $q$ are distinct primes, then 	 $U_p\cap U_q$ is finite.
\end{lemma}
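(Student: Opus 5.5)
Take $n\in U_p\cap U_q$ and write $n=p^a s=q^b s'$ with $p\nmid s$ and $q\nmid s'$. Since $n\in U_p$, the $p$-free part $s$ divides $p^2-1$ or $p^2+1$, so $|s|$ lies in the finite set of divisors of $(p^2-1)(p^2+1)=p^4-1$. Likewise $|s'|$ divides $q^4-1$. It remains to bound the exponents $a=\nu_p(n)$ and $b=\nu_q(n)$.

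The key observation is that $p\ne q$, so $p^a$ divides $n$ and is coprime to $q$. Hence $p^a$ divides the $q$-free part $s'=n/q^{\nu_q(n)}$. Since $s'$ divides $q^4-1$, we get $p^a\le q^4-1$. Symmetrically, $q^b$ divides $s$, which divides $p^4-1$, so $q^b\le p^4-1$. Thus $a\le \log_p(q^4-1)$ and $b\le\log_q(p^4-1)$ are bounded.

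Consequently $|n|=p^a|s|\le (q^4-1)(p^4-1)$. Only finitely many nonzero integers have bounded absolute value, so $U_p\cap U_q$ is finite.

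More precisely, $n$ divides $\pm\operatorname{lcm}$-type bounds: $n=p^a s$ with $p^a\mid q^4-1$ and $s\mid p^4-1$, so $n$ divides $(q^4-1)(p^4-1)$. This explicit form is useful for the computer search that follows.

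There is no real obstacle here. The only point needing care is the coprimality step: $p^{\nu_p(n)}$ survives when the $q$-part of $n$ is removed, and this uses that $p$ and $q$ are distinct primes. Negative $n$ is handled by taking absolute values throughout.
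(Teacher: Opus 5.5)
Your proof is correct and follows essentially the same route as the paper. The paper writes $n=p^aq^bs$ with $p,q\nmid s$ and reads off from $n\in U_p$ that $q^bs$ divides $p^2\pm1$, and from $n\in U_q$ that $a$ is bounded. Your two factorizations $n=p^as=q^bs'$, together with the observation $p^a\mid s'$, carry out exactly the same bounding of each prime part by the other prime's $p^2\pm1$ (resp.\ $q^2\pm1$).
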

\begin{proof}
	Let $n\in U_p\cap U_q$ and write 
	\[
	n=p^aq^bs,
	\] where $p,q\nmid s$. Since $n\in U_p$, we have that $q^bs$ divides either $p^2-1$ or $p^2+1$. Therefore, $b$ and $s$ can take only a finite number of values. Similarly, from $n\in U_q$, we obtain that $a$ can take only a finite number of values.
\end{proof}
\begin{remark}\label{rem_13}
{\em The proof of Lemma \ref{lem_17} gives a powerful way to compute the greatest integer in  $U_p\cap U_q$ for any distinct primes $p$ and $q$. Take, for example,  $p=3$ and $q=7$. Let $n\in U_3\cap U_7$ and write it as $n=3^a7^bs$, with $3,7\nmid s$. Then, $7^bs$ must divide either $3^2-1=8$ or $3^2+1=10$. Similarly, $3^as$ must divide either $7^2-1=48$ or $7^2+1=50$. We have four cases to consider. If $7^bs\mid 8=2^3$ and $3^as\mid 48=2^43$ then $b=0$, $a\leq 1$ and $s|2^3$, so $n\le3\cdot2^3=24$. Similarly, if $7^bs\mid 8=2^3$ and $3^as\mid 50=2\cdot 5^2$ then $n\leq 2$; if $7^bs\mid 10=2\cdot 5$ and $3^as\mid 48=2^43$ then $n\leq 2\cdot 3=6$; finally, if $7^bs\mid 10=2\cdot 5$ and $3^as\mid 50=2\cdot 5^2$ then $n\leq 2\cdot 5=10$. Therefore, the greatest integer in $U_3\cap U_7$ is 24.}
\end{remark}

While Lemma \ref{lem_12} gives constraints on $r$ from prime numbers dividing
$KL(L-K)$, we now consider prime numbers not dividing $KL(L-K)$. Notice that
$2\mid KL(L-K)$: if either $K$ or $L$ is even, this is immediate, while if both
are odd, then $L-K$ is even. Thus, every prime $p$ not dividing $KL(L-K)$ is
odd.
The main idea is the following. For any prime
$p\nmid KL(L-K)$, the reduction $\widehat Q(X)$ modulo $p$ of the polynomial $Q(X)$ has degree $2r+2$ and its splitting field is contained in $\bF_{p^4}$ by Lemma \ref{lem_14}. This constrains the number of its zeros and hence the
value of $r$. The argument must take account of zeros with multiplicity
greater than one. The next lemmas bound the number of nonsimple zeros
of $\widehat Q(X)$ when $p\nmid KL(L-K)$. More precisely, we show that
$\widehat Q(X)$ has no zero with multiplicity greater than four and
construct polynomials $h_1(X),h_2(X),h_3(X),h_4(X)$ such that every
zero of multiplicity $m\le4$ is a zero of $\widehat h_m(X)$.
\begin{lemma}\label{lem_13'}
Let $K$ and $L$ be two integers  and set
\[
h_2(X)  =  (X-1) g_1(X) g_2(X),
\]
where	\[
	\begin{array}{rcl}
		g_1(X) & = &(r+1) LX- {rK},\\[.1in]
		g_2(X) & = & (2r+1)^2KLX^2-(4r^2(L^2+K^2)+8rL^2-2KL+4L^2)X+(2r+1)^2KL.
	\end{array}
	\]
	Let $p$ be  any odd prime number not dividing $KL(L-K)$. Then, each zero of $\widehat Q(X)\in\bZ_p[X]$ with multiplicity at least two is a zero of $\widehat h_2(X)\in\bZ_p[X]$.
\end{lemma}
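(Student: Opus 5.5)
A multiple zero $\alpha$ of $\widehat Q(X)$ is a common zero of $\widehat Q(X)$ and $\widehat Q'(X)$. Since $p\nmid L$, we have $\widehat Q(0)=\widehat L\neq 0$, so $\alpha\ne 0$. The plan is to eliminate the high powers of $X$ between these two equations, reducing to a low-degree relation that forces $\alpha$ to be a zero of $\widehat h_2$.

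Write $u=\alpha^{r}$ and work in $\overline{\bF}_p$. Then $\widehat Q(\alpha)=0$ reads
\[
L\alpha^2u^2-K\alpha u^2-2(L-K)\alpha u-K\alpha+L=0 .
\]
Next compute $XQ'(X)$:
\[
XQ'(X)=(2r+2)LX^{2r+2}-(2r+1)KX^{2r+1}-2(r+1)(L-K)X^{r+1}-KX .
\]
Evaluating at $\alpha$ gives a second equation in $\alpha$ and $u$. Subtracting $(r+1)$ times the first equation removes the $\alpha u$ terms, since both carry the coefficient $2(r+1)(L-K)$. This leaves the relation
\[
-rK\alpha u^2+rK\alpha-(r+1)L=0 ,
\]
that is, $rK\alpha(u^2-1)=(r+1)L$.

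We now eliminate $u$.

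If $r\equiv 0 \pmod p$, the relation gives $(r+1)L\equiv 0$, i.e.\ $L\equiv 0$, which is excluded. Hence we may assume $p\nmid r$; the degenerate case $p\mid r+1$ is handled separately below.

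Assuming $p\nmid r$, solve for $u^2$:
\[
u^2=1+\frac{(r+1)L}{rK\alpha}.
\]
Substitute this into the first equation. It becomes linear in $u$:
\[
2(L-K)\alpha u=(L\alpha^2-K\alpha)u^2-K\alpha+L ,
\]
which expresses $u$ as a rational function of $\alpha$. Squaring and equating with the formula for $u^2$ yields a polynomial equation in $\alpha$ alone.

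Explicitly: from $u^2$ we get
\[
rK\alpha\,(L\alpha-K)u^2=(L\alpha-K)\bigl(rK\alpha+(r+1)L\bigr).
\]
So the right-hand side of the linear equation becomes
\[
\frac{(L\alpha-K)\bigl(rK\alpha+(r+1)L\bigr)}{rK}-K\alpha+L .
\]
Squaring it and setting it equal to $4(L-K)^2\alpha^2u^2$ gives a quartic in $\alpha$. One expects (as a MAGMA-type check) this quartic to factor as $(\alpha-1)\,g_1(\alpha)\,g_2(\alpha)$, up to a nonzero constant and after discarding the factor $\alpha$. Since $\alpha=1$ is always a double zero of $Q$ (Proposition \ref{prop_5}), the factor $X-1$ is expected.

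The main obstacle is this factorization. Doing it by hand is messy, and I would verify it symbolically, treating $r$ as an indeterminate so that the identity holds in $\bZ[r,K,L][X]$ and then reduces mod $p$. One must also check that the constants divided by in the process ($rK$, $L-K$, $2$) are units mod $p$. This uses $p$ odd, $p\nmid KL(L-K)$, and $p\nmid r$ (the case $p\mid r$ was disposed of above).

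If $p\mid r+1$, then $g_1$ reduces to $-\widehat{rK}X=\widehat K X$. The relation above gives $u^2=1$, so the linear equation yields $2(L-K)\alpha u=(L-K)(\alpha^2+1)$, whence $(\alpha-u)^2=0$ and $\alpha=u=\pm1$. One then checks directly that $\alpha=\pm1$ is a zero of $\widehat h_2$: $\alpha=1$ via the factor $X-1$, and $\alpha=-1$ via $\widehat g_2(-1)$ when $2r+1\equiv-1$.
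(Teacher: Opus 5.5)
\textbf{Verdict: there is a genuine gap.} Your strategy matches the paper's: eliminate $u=\alpha^r$ between $\widehat Q(\alpha)=0$ and $\widehat Q'(\alpha)=0$, then impose that $u^2=\alpha^{2r}$. The paper does exactly this with $Y=X^r$, $Z=X^{2r}$ and Cramer's rule. However, your key elimination step is miscomputed. In $XQ'(X)-(r+1)Q(X)$ the coefficient of $X^{2r+2}$ is $(2r+2)L-(r+1)L=(r+1)L\neq0$, and you dropped it. The correct relation is
\[
(r+1)L\alpha^2u^2-rK\alpha u^2+rK\alpha-(r+1)L=0,\qquad\text{i.e.}\qquad \alpha\,g_1(\alpha)\,u^2=g_1^*(\alpha),\quad g_1^*(X):=(r+1)L-rKX .
\]
Your relation $rK\alpha(u^2-1)=(r+1)L$ already fails at $\alpha=u=1$, which is a double zero of $\widehat Q$ for every $p$; it would force $p\mid(r+1)L$. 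Everything downstream inherits the error:
\begin{itemize}
\item The case $p\mid r$ is not impossible, since the double zero $\alpha=1$ is still there.
\item The quartic you would obtain does not vanish at $\alpha=1$ unless $p\mid r+1$; the two sides differ there by $(L-K)^2(r+1)^2L^2/(rK)^2$. So the factorization you expect cannot hold, and the deferred symbolic check would fail.
\end{itemize}
The branch $p\mid r+1$ has further independent mistakes:
\begin{itemize}
\item There $\widehat g_1=\widehat K$ is a nonzero constant, not $\widehat KX$.
\item With $u^2=1$, the equation $\widehat Q(\alpha)=0$ reads $2(L-K)\alpha u=L\alpha^2-2K\alpha+L$, not $(L-K)(\alpha^2+1)$. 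So $u=-1$ leads to the zeros of $LX^2+2(L-2K)X+L$, not to $\alpha=-1$.
\item $\widehat g_2(-1)=4\widehat K^2\neq0$ when $r\equiv-1 \pmod p$, so your final check is false.
\end{itemize}
The lemma survives in that case only because $\widehat g_2=\widehat K\,(LX^2+2(L-2K)X+L)$ when $r\equiv-1 \pmod p$.

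\textbf{How to repair it.} With the corrected relation the argument closes with no case distinction and no division. Write $\widehat Q(\alpha)=0$ as $(L\alpha-K)\alpha u^2-2(L-K)\alpha u+L-K\alpha=0$. Multiply by $g_1(\alpha)$ and replace $\alpha g_1(\alpha)u^2$ by $g_1^*(\alpha)$. This gives $2(L-K)\alpha g_1(\alpha)u=N(\alpha)$, where
\[
N(X)=(LX-K)g_1^*(X)+(L-KX)g_1(X)=2\bigl((r+1)L^2+rK^2\bigr)X-(2r+1)KL(X^2+1).
\]
Squaring and substituting once more yields $N(\alpha)^2=4(L-K)^2\alpha\, g_1(\alpha)g_1^*(\alpha)$. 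One then verifies the identity
\[
N(X)^2-4(L-K)^2X\,g_1(X)g_1^*(X)=KL(X-1)^2g_2(X)
\]
in $\bZ[r,K,L][X]$. This is literally the paper's identity $\Delta_Y^2-\Delta_Z\Delta=\widehat{KL}(X-1)^2\widehat g_2(X)$. Your $N$ is the paper's $\Delta_Y$. The equation $2(L-K)\alpha g_1(\alpha)u=N(\alpha)$ and $2(L-K)$ times the corrected relation are its Cramer equations $\Delta Y=\Delta_Y$ and $\Delta Z=\Delta_Z$. Since $p\nmid KL$, every multiple zero is a zero of $(X-1)\widehat g_2(X)$, hence of $\widehat h_2(X)$. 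Note that $g_1$ enters as the coefficient of $u^2$, which is the paper's determinant $\Delta=2(\widehat L-\widehat K)X\widehat g_1(X)$. It is not a factor of the eliminant, as you predicted.
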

\begin{proof}
	 {We recall that any zero of $\widehat Q(X)$ with multiplicity at least two is also a zero of its derivative $\widehat Q'(X)$.  The first derivative of $Q(X)$ is 
	\[
	Q'(X)= 2(r+1)LX^{2r+1}-(2r+1)KX^{2r}-2(r+1)(L-K)X^{r}-K.
	\]}
	To find the repeated zeros of $\widehat Q(X)$, we must simultaneously solve $\widehat Q(X)=0$ and $\widehat Q'(X)=0$, that is we are looking for the solutions of 
	\begin{equation*}
		\left\{\begin{array}{lcl}
			\widehat LX^{2r+2}-\widehat KX^{2r+1}-2 (\widehat L-\widehat K)X^{r+1}-\widehat KX+ \widehat L & = & 0 \\[.1in]
			2(\widehat r+1)\widehat LX^{2r+1}-(2\widehat{r}+1)\widehat K X^{2r}-2(\widehat r+1)(\widehat L-\widehat K)X^{r}-\widehat K & = & 0 .
		\end{array}\right.  
	\end{equation*}
	To simplify computations, we set $Y=X^r$ and $Z=X^{2r}$ in order to reinterpret the previous system of polynomial equations as a system of linear equations in the variables $Y$ and $Z$:
	\begin{equation}\label{eq_23}
		\left\{\begin{array}{rrcl}
			(\widehat LX^{2}-\widehat KX)Z & -2 (\widehat L-\widehat K)XY & = & \widehat KX- \widehat L \\[.1in]
			\left(2 (\widehat r+1)\widehat LX- (2\widehat{r}+1)\widehat K\right)Z & -2(\widehat r+1)(\widehat L-\widehat K)Y & = & \widehat K.
		\end{array}\right.  
	\end{equation}
	 {Letting $g_1(X)  = (r+1) LX- {rK}$, the  determinant of the above system is 
	\begin{gather*}
			\Delta = \begin{vmatrix}
				\widehat LX^{2}-\widehat KX & -2 (\widehat L-\widehat K)X \\
				2 (\widehat r+1)\widehat LX- (2\widehat{r}+1)\widehat K & -2(\widehat r+1)(\widehat L-\widehat K)
			\end{vmatrix}=2(\widehat L-\widehat K)\widehat g_1(X)X.	\end{gather*}}
	Since $p\nmid (L-K)$ and $\widehat Q(0)\neq 0$, we have that $\Delta\neq 0$, except for the  possible zeros of $\widehat g_1(X)$. Since $p$ does not divide either $K$ or $L$, and  $r+1$ and $r$ are coprime, we see that  $\widehat g_1(X)$ is not the zero polynomial in $\bZ_p[X]$.
	To shorten the notation, we write the solutions $(y,z)$ of \eqref{eq_23}, obtained by applying the well-known Cramer's rule, as  $(\Delta_Y/\Delta,\Delta_Z/\Delta)$,  {where 
	\begin{gather*}
			\Delta_Y= \begin{vmatrix}
				\widehat LX^{2}-\widehat KX & \widehat KX- \widehat L \\
				2 (\widehat r+1)\widehat LX- (2\widehat{r}+1)\widehat K & \widehat K
			\end{vmatrix}
			\end{gather*}
and   
	\begin{gather*}
			\Delta_Z= \begin{vmatrix}
				\widehat KX- \widehat L & -2 (\widehat L-\widehat K)X \\
				\widehat K & -2(\widehat r+1)(\widehat L-\widehat K)
			\end{vmatrix}.
	\end{gather*}}
	Since $Y^2=Z$, the polynomial 
	\[
	 \Delta^2(Y^2-Z) = \Delta_Y^2-\Delta_Z\Delta = \widehat{KL}(X-1)^2\widehat g_2(X),
	\]
	must be zero on the repeated zeros of $\widehat Q(X)$. This yields that all zeros of $\widehat Q(X)$ are simple, with exception of $X=1$ and, possibly, the zeros of $\widehat g_1(X)$ and $\widehat g_2(X)$. Therefore, the zeros with multiplicity at least two are also zeros of  $\widehat h_2(X)=(X-1) \widehat g_1(X) \widehat g_2(X)$.
\end{proof}

\begin{lemma}\label{lem_13''}
 {Let $K$ and $L$ be two integers  and set 
	\[
	\begin{array}{rcl}
		h_3(X) & = & g_1(X)\left[(2r+1)^2KLX^2+2(r(K+L)+L)^2X+(2r+1)^2KL\right],\\[.1in]
		h_4(X) & = & (r+1)^2LX-r^2K,
	\end{array}
	\]
	where $g_1(X)$ is as in Lemma \ref{lem_13'}.
Let $p$ be any odd prime not dividing $KL(L-K)$.}	If $p\mid(r+1)(2r+1)$, then $\widehat Q(X)\in\bZ_p[X]$ has no zero with multiplicity greater than two.  {If $p\nmid(r+1)(2r+1)$,} then every zero of $\widehat Q(X)$ with multiplicity at least three is a zero of $\widehat{h}_3(X)$, every zero of multiplicity four is a zero of $\widehat{h}_4(X)$ and there are no zeros with multiplicity greater than four.
\end{lemma}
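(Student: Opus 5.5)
The plan is to extend the derivative-elimination argument of Lemma \ref{lem_13'} to higher derivatives. A zero $x$ of $\widehat Q(X)$ of multiplicity at least $j$ satisfies $\widehat Q^{(i)}(x)=0$ for $i<j$. Since $p$ is odd and does not divide $KL$, we have $x\neq0$. It is then convenient to work with the operator $\theta=X\frac{d}{dX}$ instead of plain derivatives. In characteristic $p$, with $j\le p$, multiplicity at least $j$ is equivalent to $\theta^i\widehat Q(x)=0$ for $i<j$. Applying $\theta^i$ to $Q(X)=LX^{2r+2}-KX^{2r+1}-2(L-K)X^{r+1}-KX+L$ gives
\[
\theta^iQ=(2r+2)^iLX^{2r+2}-(2r+1)^iKX^{2r+1}-2(r+1)^i(L-K)X^{r+1}-KX+\delta_{i0}L .
\]
As before, we set $Y=X^r$ and $Z=X^{2r}$, so each condition is linear in $(Y,Z)$ with coefficients polynomial in $X$. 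We already know from Lemma \ref{lem_13'} that the conditions $Q=\theta Q=0$ determine $(y,z)$ by Cramer's rule whenever $\widehat g_1(x)\neq0$. On the zeros of $\widehat g_1$ nothing needs to be proved, since $g_1$ is a factor of $h_3$.

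For multiplicity at least three, I would substitute $(y,z)=(\Delta_Y/\Delta,\Delta_Z/\Delta)$ into $\theta^2\widehat Q=0$. After clearing $\Delta$, this gives a polynomial in $x$. I expect it to factor as a power of $x$, times nonzero constants such as $(L-K)$, $(r+1)$ and $(2r+1)$, times the bracket quadratic appearing in $h_3$. It is better to use this linear condition than $Y^2=Z$, because it gives a lower degree. This step is also where the case $p\mid(r+1)(2r+1)$ separates out. In that case the coefficients $(2r+2)^i$ and $(2r+1)^i$ degenerate, and reducing the system modulo $p$ shows directly that the three conditions are inconsistent. For example, if $p\mid 2r+1$, the $X^{2r+1}$ term disappears from $\theta^i Q$ for $i\ge1$, and eliminating $Y$ and $Z$ leaves a nonzero constant times a power of $x$, which cannot vanish. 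So in that case there is no zero of multiplicity greater than two. The case $p\mid r+1$ should be handled in the same way.

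For multiplicity at least four, I would first combine the conditions $\theta^i\widehat Q=0$, $i=1,2,3$, to eliminate the monomials $X^{2r+2}$, $X^{2r+1}$ and $X^{r+1}$. This is a Vandermonde elimination with nodes $2r+2$, $2r+1$, $r+1$, and $1$ for the $KX$ term. It leaves the relation $\widehat K x\cdot(2r+1)(2r)(r)\cdots=0$ unless a specific combination vanishes, so I need to keep track of which combination survives. A cleaner approach is to eliminate $X^{2r+2}$ and $X^{2r+1}$ using the two linear equations together with $Y$, and then compare with the multiplicity-three relation. This should give that $x$ is a root of $(r+1)^2Lx-r^2K$, which is $h_4$. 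For multiplicity at least five, the analogous elimination of all four monomials from $\theta^i\widehat Q$, $i=1,\dots,4$, leaves a nonzero Vandermonde determinant times $\widehat K x$. Since $p\nmid K$ and $x\neq0$, this is a contradiction. Before concluding, I need to check that the nodes $2r+2,2r+1,r+1,1$ are pairwise distinct modulo $p$, which uses $p\nmid r(r+1)(2r+1)$. The case $p\mid r$ may need a separate short remark, since there $g_1$ and $h_4$ reduce to multiples of $X$.

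The main obstacle is the factorization claims. These are the explicit symbolic eliminations that produce exactly the quadratic in $h_3$ and the linear form $h_4$, while keeping track of which discarded factors are guaranteed nonzero modulo $p$. I would carry them out with computer algebra, as the paper does elsewhere. I would then check by hand that the discarded factors are products of $K$, $L$, $L-K$, $x$, $r+1$ and $2r+1$.
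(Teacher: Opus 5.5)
\textbf{There is a genuine gap.} Your framework is the paper's: derivative conditions, the linearization $Y=X^r$, $Z=X^{2r}$, Cramer's rule from Lemma~\ref{lem_13'}, and the split on $p\mid(r+1)(2r+1)$. Your degenerate cases are also fine: there $(\theta^2-\theta)\widehat Q=X^2\widehat Q''$ is a nonzero multiple of $X^{r+1}$ (if $p\mid 2r+1$) or of $X^{2r+1}$ (if $p\mid r+1$). The gap is in the step you expect to produce the bracket of $h_3$.

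\textbf{The triple-root step.} The linearized $Q$ and $\theta Q$ vanish at the Cramer solution. So substituting $(y,z)=(\Delta_Y/\Delta,\Delta_Z/\Delta)$ into $\theta^2Q$ amounts to substituting into
\[
(\theta^2-\theta)Q=2(2r+1)Xg_1(X)Z-2r(r+1)(L-K)XY .
\]
Using $\Delta_Z=2(L-K)\bigl((r+1)L-rKX\bigr)$, clearing $\Delta$ gives
\[
2x(L-K)\bigl[-r(r+1)(2r+1)KL(x^2+1)+2\bigl(r^3K^2+(r+1)^3L^2\bigr)x\bigr].
\]
This palindromic quadratic is not proportional to the one in $h_3$. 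For $r=1$, $K=1$, $L=2$ it is $-12X^2+66X-12$, against $18X^2\pm 50X+18$. Two non-proportional palindromic quadratics have no common nonzero root, so the stated relation cannot be deduced from yours. Your route proves a true variant of the lemma, which is enough for the degree count in Proposition~\ref{prop_9}, but not the statement itself.

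The missing idea is to divide before linearizing. Since $x\neq0$, the third condition is $Q^{[2]}(x)=0$ with $Q^{[2]}=Q''/(2X^{r-1})$. This is a trinomial whose linearization $(2r+1)g_1(X)Y-r(r+1)(L-K)$ involves $Y$ alone. That division is exactly where $Z=Y^2$ is used, which is the information your purely linear substitution throws away. Multiplying by $\Delta=2(L-K)Xg_1(X)$ and using $\Delta Y=\Delta_Y$ then gives $g_1(x)\bigl[(2r+1)\Delta_Y(x)-2r(r+1)(L-K)^2x\bigr]=0$.

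\textbf{The quadruple-root step.} This has a similar problem. With $i=1,2,3$ alone, the system in the four monomial values $Lx^{2r+2}$, $-Kx^{2r+1}$, $-2(L-K)x^{r+1}$, $-Kx$ is homogeneous of rank three, so it yields no relation. Adding $i=0$ brings in the constant $L$. Lagrange interpolation at the nodes $2r+2,2r+1,r+1,1$ then gives $Kx=L(r+1)^2/r^2$, i.e.\ $r^2Kx=(r+1)^2L$. This is the reciprocal of $h_4$, not $h_4$. You would still need the palindromy of $Q$ ($x$ and $x^{-1}$ have equal multiplicity) to reach $\widehat h_4(x)=0$.

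The paper instead differentiates the trinomial once more: $Q^{[2]\prime}/X^{r-1}=(2r+1)h_4$. It then excludes multiplicity five because the linear $\widehat h_4$ cannot have a double zero, as $\widehat h_4'=(r+1)^2L\neq0$. This argument needs no hypothesis $p\nmid r$, so your separate case, where the Vandermonde nodes collide, disappears.

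\textbf{A sign issue in the printed $h_3$.} When you run the symbolic check, note that the elimination above actually yields
\[
-g_1(X)\bigl[(2r+1)^2KLX^2-2(r(K+L)+L)^2X+(2r+1)^2KL\bigr],
\]
so the $+$ in the middle coefficient of the printed $h_3$ appears to be a sign slip. Take $r=1$, $K=3$, $L=7$, $p=5$. Then $\widehat Q=2(X-1)^4$ and $\widehat h_4(1)=0$, but the printed $\widehat h_3(1)=11\cdot 956\equiv 1$ modulo $5$. The corrected bracket gives $189-578+189=-200\equiv0$.
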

\begin{proof}
	A zero of $\widehat Q(X)$ with multiplicity greater than two  is a zero of $\widehat Q'(X)\in\bZ_p[X]$ with  multiplicity greater than one, so it is a zero of $\widehat Q''(X)\in\bZ_p[X]$. We set
	\[
	Q^{[2]}(X):=Q''(X)/2X^{r-1}= (r+1)(2r+1)LX^{r+1}-r(2r+1)KX^{r}-r(r+1)(L-K).
	\]
	Since $X=0$ is not a root of $\widehat Q(X)$, the zeros of
$\widehat Q(X)$ with multiplicity greater than two are also zeros of
$\widehat Q^{[2]}(X)$.
	\\
Assume $p|(2r+1)$. Then, $\widehat Q^{[2]}(X)=\widehat{r}^2(\widehat{L}-\widehat{K})$. In addition,  $p\nmid r$ otherwise $p$ would divide $(2r+1) -2r=1$. This, together with $p\nmid\, L-K$, implies that $\widehat Q^{[2]}(X)$ is a non-zero constant polynomial, whence $\widehat Q(X)$ has no zeros with multiplicity greater than two.

 If $p|(r+1)$, then $\widehat Q^{[2]}(X)=-\widehat{K}X^r$. As $p\nmid K$ and $X=0$ is not a zero of $\widehat Q(X)$, we get that   $\widehat Q(X)$ has no zeros with multiplicity at least three.

Assume now  $p\nmid(r+1)(2r+1)$.
To find the zeros of $\widehat Q(X)$ with multiplicity at least three, we solve 
\[
\left\{\begin{array}{rcl}
\widehat Q(X) & = & 0\\[.03in]
\widehat Q'(X) & = & 0\\[0.03in]
\widehat Q^{[2]}(X) & = & 0
\end{array}\right.
\]
 and proceed as in the proof of Lemma \ref{lem_13'} setting $Y=X^r$ and $Z=X^{2r}$. From the first two equations we obtain $Y=\Delta_Y/\Delta$, while the last equation becomes
	\[
	(\widehat{r}+1)(2\widehat{r}+1)\widehat LXY-\widehat{r}(2\widehat{r}+1)\widehat KY-\widehat{r}(\widehat{r}+1)(\widehat{L}-\widehat{K})=0.
	\]
	Then, by multiplying this equation by $\Delta$ and taking into account that $\Delta Y=\Delta_Y$, we obtain
\[
\begin{array}{rcl}
0 & = & \left((\widehat{r}+1)\widehat{L}X-\widehat{r}\widehat{K}\right)\left((2\widehat{r}+1)^2\widehat{K}\widehat{L}X^2+2(\widehat{r}(\widehat{K}+\widehat{L})+\widehat{L})^2X+(2\widehat{r}+1)^2\widehat{K}\widehat{L}\right)\\[0.03in]
& = & \widehat h_3(X).
\end{array}
\]

Therefore, the zeros of $\widehat Q(X)$ with multiplicity at least three are also  zeros of $\widehat h_3(X)$.

Similarly, the zeros of $\widehat Q(X)$ with multiplicity at least four  are multiple zeros of $\widehat Q^{[2]}(X)$. So, considering the polynomial
\[
 Q^{[3]}(X)=Q^{[2]'}(X)/X^{r-1}=(2r+1)\left((r+1)^2LX-r^2K\right)=(2r+1)h_4(X),
\]
we have that every such zero is also a zero of $\widehat Q^{[3]}(X)$, hence of $\widehat h_4(X)$ since $p\nmid (2r+1)$. 

Finally, every zero of $\widehat Q(X)$ with multiplicity greater than four would be a non-simple zero of $\widehat h_4(X)$. Therefore, it would be a zero of its derivative $\widehat h'_4(X)=(\widehat r+1)^2L$. Since $p\nmid (r+1)$, $\widehat h'_4(X)$ is a non-zero constant polynomial. Hence, $\widehat Q(X)$ has no zero with multiplicity greater than four.
\end{proof}
The next lemma is a consequence of the two previous lemmas.
\begin{lemma}\label{lem_13}
	Let $\Gamma$ be an unbalanced distance-biregular graph with diameter $d=2r+2$, girth deficiency two  and vertex degrees $\beta_0^A=mK+1$ and $\beta_0^B=mL+1$, with $\GCD(K,L)=1$. Then, for any odd prime $p$ not dividing $KL(L-K)$, the polynomial $\widehat Q(X)\in\bZ_p[X]$ divides $\widehat h_1(X)\widehat h_2(X)\widehat h_3(X)\widehat h_4(X)$, where 
	\[
	h_1(X)  = \frac{(X^{p^2+1}-1)(X^{p^2-1}-1)}{(X^2-1)},
	\]
	and $h_2(X)$, $h_3(X)$ and $h_4(X)$ are the polynomials defined in Lemmas \ref{lem_13'} and \ref{lem_13''}.
\end{lemma}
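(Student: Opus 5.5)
Since $p\nmid K$ (so $\widehat L\neq 0$ as well), $\widehat Q(X)$ has full degree $2r+2$. We split into linear factors over the algebraic closure of $\Fp$ and compare multiplicities root by root. Write $\widehat Q(X)=\widehat L\prod_\alpha (X-\alpha)^{e_\alpha}$. Note $\widehat Q(0)=\widehat L\neq0$, so every $\alpha$ is nonzero. It suffices to show that, for each root $\alpha$, the multiplicity of $\alpha$ as a root of $\widehat h_1\widehat h_2\widehat h_3\widehat h_4$ is at least $e_\alpha$.

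\emph{Step 1: every root of $\widehat Q$ is a root of $\widehat h_1$ or equals $1$.} Since $Q=q/m$, the factorization $q=m(X-1)^2q_1\cdots q_n$ reduces modulo $p$ to $\widehat Q=c(X-1)^2\widehat q_1\cdots\widehat q_n$. By Lemma~\ref{lem_14}.2, each nonzero root of each $\widehat q_i$ is a root of $(X^{p^2-1}-1)(X^{p^2+1}-1)$. Because $p$ is odd, $p\nmid p^2\pm1$, so both factors are separable. Their common roots satisfy $X^{\gcd(p^2-1,p^2+1)}=X^2=1$, i.e.\ $X=\pm1$. Hence $h_1$, obtained by dividing out $X^2-1$ once, still vanishes simply at every root of either factor, including $\pm1$. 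More precisely, $\widehat h_1$ is separable and contains every root of $X^{p^2-1}-1$ and of $X^{p^2+1}-1$. So every root $\alpha$ of $\widehat Q$ is a simple root of $\widehat h_1$ (for $\alpha=1$ this also holds, since $1$ is a root of both factors and thus of $\widehat h_1$).

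\emph{Step 2: account for higher multiplicities.} If $e_\alpha\ge2$, Lemma~\ref{lem_13'} says $\alpha$ is a root of $\widehat h_2$. If $e_\alpha\ge3$, Lemma~\ref{lem_13''} says $\alpha$ is a root of $\widehat h_3$; in the subcase $p\mid(r+1)(2r+1)$ this cannot occur. If $e_\alpha=4$, $\alpha$ is a root of $\widehat h_4$. Finally, $e_\alpha\le4$ always holds. Therefore the combined multiplicity of $\alpha$ in $\widehat h_1\widehat h_2\widehat h_3\widehat h_4$ is at least $1+[e_\alpha\ge2]+[e_\alpha\ge3]+[e_\alpha\ge4]=e_\alpha$. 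Multiplying over all roots gives the divisibility.

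\emph{Main obstacle.} The delicate point is the root $\alpha=1$, which has multiplicity at least $2$ already in $q$ by construction, and the precise multiplicity bookkeeping there. In Step~1 I must check that $1$ remains a root of $\widehat h_1$ after dividing by $X^2-1$; it does, since $1$ is a double root of the product. Then Step~2 handles the extra multiplicity via $(X-1)\mid h_2$ and via Lemma~\ref{lem_13''}. I also have to check that the leading coefficients $\widehat h_i$ do not degenerate, i.e.\ that none of the $\widehat h_i$ is the zero polynomial. For $\widehat h_1$ this is clear since it is monic. For $\widehat g_1$ it was shown in Lemma~\ref{lem_13'}. For $\widehat h_4$, note $\widehat h_4'=(\widehat r+1)^2\widehat L\neq0$ when $p\nmid r+1$. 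For $\widehat g_2$ and the quadratic factor of $\widehat h_3$, the constant term is $(2r+1)^2KL$, which is nonzero mod $p$ whenever those factors are actually invoked, namely when $p\nmid(2r+1)$. In the remaining case of $\widehat g_2$ with $p\mid 2r+1$, I would check directly that its middle coefficient is nonzero mod $p$, or argue that $\widehat Q$ has no nonsimple root in that case.
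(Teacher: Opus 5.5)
Your proposal is correct relative to the preceding lemmas and is essentially the paper's proof: Lemma~\ref{lem_14} makes every root of $\widehat Q$ a simple root of the separable polynomial $\widehat h_1$, and Lemmas~\ref{lem_13'} and~\ref{lem_13''} supply one further factor $X-\alpha$ from $\widehat h_2$, $\widehat h_3$, $\widehat h_4$ for each additional unit of multiplicity, up to the maximum of four; your separability check and explicit multiplicity count simply make the paper's brief argument precise. Two remarks: the nondegeneracy checks are not needed for the divisibility itself (they matter only for the degree count in Proposition~\ref{prop_9}), and, like the paper, you use Lemma~\ref{lem_13''} as printed, whereas eliminating $Y$ there actually gives the quadratic factor $(2r+1)^2KLX^2-2(r(K+L)+L)^2X+(2r+1)^2KL$ of $h_3$ (with the printed $+$ sign that lemma fails, e.g.\ for $(r,K,L,p)=(1,1,2,7)$ one has $\widehat Q=2(X-1)^4$ but $\widehat h_3(1)\neq 0$), so your Step~2 should be read with this corrected $h_3$.
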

\begin{proof}
	By Lemma \ref{lem_14}, the zeros of $\widehat Q(X)$ are zeros of $(X^{p^2-1}-1)(X^{p^2+1}-1)$, hence they are zeros of $\widehat h_1(X)$. Note that the division by $X^2-1$ has been done to obtain that every zero of $\widehat h_1(X)$ is simple.  {Therefore, combining Lemmas \ref{lem_13'} and \ref{lem_13''}, we find that  every zero of $\widehat{Q}(X)\in\bZ_p[X]$ has multiplicity at most four, and a zero of multiplicity $m\le 4$  is a zero of $\widehat{h}_m(X)$. Thus every irreducible factor of $\widehat Q(X)$ in the splitting field divides $\widehat h_1(X)\widehat h_2(X)\widehat h_3(X)\widehat h_4(X)$.}
\end{proof}
\begin{proposition}\label{prop_9}
Let $\Gamma$ be an unbalanced distance-biregular graph with diameter $d=2r+2$, girth deficiency two  and vertex degrees $\beta_0^A=mK+1$ and $\beta_0^B=mL+1$, with $\GCD(K,L)=1$. Then,  $r\leq p^2+2$  for any prime $p$ not dividing $KL(L-K)$.
\end{proposition}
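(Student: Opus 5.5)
The plan is a degree count for $\widehat Q(X)$ modulo $p$, with each zero counted according to its multiplicity, using Lemma \ref{lem_13}. Fix a prime $p\nmid KL(L-K)$; by the remark before Lemma \ref{lem_13'}, $p$ is odd. Since $p\nmid L$, the leading coefficient $\widehat L$ is nonzero, so $\deg\widehat Q(X)=2r+2$. Also $\widehat Q(0)=\widehat L\neq 0$. Factor $\widehat Q(X)$ over the algebraic closure of $\Fp$ as $\widehat L\prod_\alpha (X-\alpha)^{m_\alpha}$. Then
\[
2r+2=\sum_\alpha m_\alpha .
\]

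The key step is to bound this sum by $\sum_{j=1}^4\deg\widehat h_j$, charging each distinct zero to the $\widehat h_j$'s. By Lemma \ref{lem_14}, every zero $\alpha$ is a root of $(X^{p^2-1}-1)(X^{p^2+1}-1)$. Since $\GCD(p^2-1,p^2+1)=2$, these two factors share exactly the roots $\pm1$, each of which is simple in each factor. Hence $h_1(X)$ has $2p^2-2$ pairwise distinct roots, and each $\alpha$ is one of them. This is where the division by $X^2-1$ is used: it ensures that each distinct zero consumes exactly one root of $\widehat h_1$.

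Next I use Lemmas \ref{lem_13'} and \ref{lem_13''}. A zero of multiplicity $m_\alpha\ge2$ is a root of $\widehat h_2$. If $m_\alpha\ge3$ it is also a root of $\widehat h_3$, and if $m_\alpha=4$ also of $\widehat h_4$; moreover $m_\alpha\le 4$. In the case $p\mid(r+1)(2r+1)$ there are no zeros of multiplicity at least $3$, so $\widehat h_3$ and $\widehat h_4$ are never charged. Assigning to $\alpha$ one root from each of $\widehat h_1,\dots,\widehat h_{m_\alpha}$, distinct $\alpha$ use distinct roots. Therefore
\[
2r+2\le (2p^2-2)+\deg\widehat h_2+\deg\widehat h_3+\deg\widehat h_4\le (2p^2-2)+4+3+1=2p^2+6,
\]
and this gives $r\le p^2+2$. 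The degrees are $\deg h_2=1+1+2$, $\deg h_3=1+2$, and $\deg h_4=1$; reduction mod $p$ can only lower them.

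The main obstacle is to make sure each charged $\widehat h_j$ is a nonzero polynomial, so that its number of roots really is bounded by its degree. For $\widehat g_1$ this is shown in Lemma \ref{lem_13'}. For $\widehat h_4$, it is only charged when $p\nmid(r+1)$, and then its leading coefficient $(r+1)^2L$ is nonzero mod $p$. For the quadratic factor of $\widehat h_3$, it is only charged when $p\nmid(2r+1)$, and then its leading coefficient $(2r+1)^2KL$ is nonzero. For $\widehat g_2$, if $p\nmid 2r+1$ the leading coefficient is nonzero. If $p\mid 2r+1$, then $4r^2\equiv1$ and $8r\equiv-4$, so the middle coefficient reduces to
\[
(L^2+K^2)-4L^2-2KL+4L^2=(L-K)^2\not\equiv0 \pmod p .
\]
With these checks in place, the counting above completes the proof.
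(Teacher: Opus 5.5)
Your proposal is correct and is essentially the paper's argument: the paper invokes Lemma \ref{lem_13} ($\widehat Q(X)$ divides $\widehat h_1(X)\widehat h_2(X)\widehat h_3(X)\widehat h_4(X)$) and compares degrees, $2r+2\le (2p^2-2)+4+3+1$, whereas you unpack that divisibility as a count of roots with multiplicity. Your check that the relevant $\widehat h_j$ do not vanish identically modulo $p$ is correct; this includes the middle coefficient of $\widehat g_2$ reducing to $(L-K)^2$ when $p\mid 2r+1$. The paper's degree comparison relies on this without stating it.
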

\begin{proof}
From Lemmas \ref{lem_13'},  \ref{lem_13''} and \ref{lem_13}, we see that 
$\deg\, h_1(X)  = 2p^2-2$, $\deg\, h_2(X)  =  4$, $\deg\, h_3(X)  =  3$ and $\deg\, h_4(X)  =  1$.
Therefore, from Lemma \ref{lem_13} we get \[2r+2=\deg\, q(X)\le \deg\, h_1(X)h_2(X)h_3(X)h_4(X)\leq 2p^2+6.\]
\end{proof}
Finally, we can collect all the previous results to give our main result on the diameter of an unbalanced distance-biregular graph with girth deficiency two. By Lemma \ref{lem_1}, the diameter of such graph is even and we write it as $d=2r+2$. As usual we write the vertex degrees as $\beta_0^A=mK+1$ and $\beta_0^B=mL+1$, with $\GCD(K,L)=1$.

Once we fix a prime $p$, Lemmas \ref{lem_12} and \ref{lem_13} give some conditions that $r,K,L$ must satisfy. It turns out that to obtain a bound on the diameter it is enough to consider the first six primes; any other larger prime does not improve the bound in Proposition \ref{prop_7''}. 
In the next lemma we take advantage of the fact that, for any given
prime, the condition of Lemma \ref{lem_13} can be checked by a finite
computation.
\begin{lemma}\label{prop_7'}
	Let $\Gamma$ be an unbalanced distance-biregular graph with diameter $d= 2r+2$, girth deficiency two and vertex degrees $\beta_0^A=mK+1$ and $\beta_0^B=mL+1$, with $\GCD(K,L)=1$. If $r\geq7$, then $KL(L-K)$ is divisible by the primes $2,3,5,7,11,13$.
\end{lemma}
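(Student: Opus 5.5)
We argue by contradiction, one prime at a time. Fix $p\in\{2,3,5,7,11,13\}$ and suppose $p\nmid KL(L-K)$. We then show that $r\ge 7$ is impossible.

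For $p=2$ this is immediate. The paper already observed that $2\mid KL(L-K)$ always: if $K$ and $L$ are both odd, then $L-K$ is even. So assume $p$ is odd.

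For odd $p\nmid KL(L-K)$, Lemma \ref{lem_13} says that $\widehat Q(X)$ divides $\widehat h_1\widehat h_2\widehat h_3\widehat h_4$ in $\bZ_p[X]$. Proposition \ref{prop_9} then gives only the weak bound $r\le p^2+2$, which is useless for $r\ge 7$ when $p\ge 3$. We therefore sharpen it into a finite check. The coefficients of $\widehat Q(X)$ depend only on $\widehat K$, $\widehat L$ and the exponent $r$. Moreover, its zeros lie in $\bF_{p^4}^*$, and these have orders dividing $p^2\pm1$. Hence the relevant data depends on $r$ only through $r$ modulo $\mathrm{lcm}(p^2-1,p^2+1)$ (the exponents $r$, $r+1$, $2r+1$, $2r+2$). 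Similarly, $h_2,h_3,h_4$ depend on $r$ only through $\widehat r$.

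The concrete plan is as follows. For each odd $p\le 13$, each pair $(\widehat K,\widehat L)\in(\bZ_p^*)^2$ with $\widehat K\ne\widehat L$, and each residue class of $r$ modulo $p\cdot\mathrm{lcm}(p^2-1,p^2+1)$, carry out two steps.
\begin{enumerate}
\item Compute $\widehat Q(X)$ reduced modulo $X^{p^4-1}-1$, i.e.\ evaluate it on $\bF_{p^4}^*$. Determine its zeros together with their multiplicities, using the polynomials $h_2,h_3,h_4$ to bound the multiplicities as in Lemmas \ref{lem_13'} and \ref{lem_13''}.
\item Check whether the total count of zeros with multiplicity, namely $\sum_\alpha\mathrm{mult}(\alpha)$, can reach $\deg\widehat Q=2r+2$. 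Since $p\nmid L$, the reduction still has degree $2r+2$ and must split completely in $\bF_{p^4}$ by Lemma \ref{lem_14}.
\end{enumerate}
Only finitely many residue classes survive, and we need the surviving ones to correspond to $r\le 6$.

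The key count is this. Each zero has multiplicity at most $4$, and all but the few zeros of $\widehat h_2$ are simple, so the number of zeros with multiplicity is at most $\#\{\text{distinct zeros in }\bF_{p^4}^*\}+O(1)$. For $r\ge 7$, the requirement $2r+2\le$ this count is what must fail. The check is a computer enumeration, just as the paper itself does elsewhere.

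The main obstacle is making this reduction to a finite computation rigorous. Multiplicities of zeros are not determined by $r$ modulo the group order alone: the derivative involves the factor $\widehat r$, which is why we include the factor $p$ in the modulus. In addition, for large $r$, the condition ``degree $2r+2$ equals the number of roots counted with multiplicity'' must be checked against the genuine polynomial and not a reduced surrogate. I would handle this by observing the following. If $\widehat Q$ splits in $\bF_{p^4}$, then $\widehat Q$ divides $\widehat h_1\widehat h_2\widehat h_3\widehat h_4$, whose degree is bounded independently of $r$, namely by $2p^2+6$. This already restricts $r\le p^2+2$, so the enumeration only needs $7\le r\le p^2+2$, with all $(\widehat K,\widehat L)$ and direct divisibility testing. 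That is a finite and explicit computation for the six primes, and it should yield a contradiction in every case.
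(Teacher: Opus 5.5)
Your final argument is the paper's. The prime $2$ is handled by the parity observation. For odd $p\nmid KL(L-K)$, you use Proposition~\ref{prop_9} to get $r\le p^2+2$, and then check by computer, over all $\widehat K,\widehat L\in\bZ_p^*$ with $\widehat K\neq\widehat L$ and all $7\le r\le p^2+2$, that $\widehat Q$ never divides $\widehat h_1\widehat h_2\widehat h_3\widehat h_4$; this contradicts Lemma~\ref{lem_13}. The residue-class detour modulo $p\cdot\mathrm{lcm}(p^2-1,p^2+1)$ is unnecessary, and, as in the paper, the conclusion rests on the outcome of that finite computation, which you have not actually run.
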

\begin{proof}
	Since $2$ always divides $KL(L-K)$, it remains to consider odd primes. Let $p$ be an odd prime not dividing $KL(L-K)$. By Proposition \ref{prop_9}, there is a finite number of possibilities for the polynomial $\widehat Q(X)$. Indeed, the proposition gives  $r\leq p^2+2$, and $p \nmid KL(L-K)$ gives  $\widehat K\neq 0\neq \widehat L$ and $\widehat K\neq \widehat L$. Since $\widehat K,\widehat L$ belong to the finite field $\bZ_p$, there are at most $(p-1)(p-2)(p^2+1)$ possibilities for $\widehat Q(X)$. For each $p\in\{3,5,7,11,13\}$, we handed all the possibilities to the computer algebra system MAGMA \cite{magma} and we obtained that in any case $\widehat{Q}(X)$ does not divide $\widehat h_1(X)\widehat h_2(X)\widehat h_3(X)\widehat h_4(X)$ whenever $r\geq7$. By Lemma \ref{lem_13}, if $r\geq7$ each prime $p\leq 13$ divides $KL(L-K)$.
\end{proof}

The previous lemma allows us to obtain a first bound on the diameter by using the divisibility hypothesis in Lemma \ref{lem_12}.

\begin{proposition}\label{prop_7''}
	Let $\Gamma$ be an unbalanced distance-biregular graph with diameter $d= 2r+2$ and girth deficiency two. Then $r\leq 50$.
\end{proposition}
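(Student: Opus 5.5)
We argue by contradiction: assume $r\ge 51$ (in particular $r\ge 7$). By Lemma \ref{prop_7'}, each of the primes $2,3,5,7,11,13$ divides $KL(L-K)$. Because $K$, $L$ and $L-K$ are pairwise coprime, each such prime divides exactly one of $K$, $L$, $L-K$. Lemma \ref{lem_12} then places one of the integers $r$, $r+1$, $2r+1$ in $U_p$ for every $p\in\{2,3,5,7,11,13\}$. Which of the three lies in $U_p$ depends only on which factor $p$ divides: $r$ if $p\mid L$, $r+1$ if $p\mid K$, and $2r+1$ if $p\mid L-K$. So to each prime we attach a label $\sigma(p)\in\{L,K,L-K\}$, and there are at most $3^6$ label assignments.

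Assign to each label its integer: $n_L=r$, $n_K=r+1$ and $n_{L-K}=2r+1$. By pigeonhole, six primes and three labels force some label to receive at least two primes $p\ne q$. For that label $n_\sigma\in U_p\cap U_q$, and Lemma \ref{lem_17} makes this set finite. Its maximum is computed as in Remark \ref{rem_13}. Write $n=p^aq^bs$. Then $q^bs$ divides $p^2\pm1$ and $p^as$ divides $q^2\pm1$, which leaves four cases to check, each giving a bound.

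Pigeonhole alone only yields $r\le\max_{p\ne q}\max(U_p\cap U_q)$, and this is too weak, since for example the maximum of $U_{11}\cap U_{13}$ is probably large. The sharper step uses every label simultaneously. For each of the $3^6$ assignments $\sigma$ and each label $\tau$, the integer $n_\tau$ must lie in $\bigcap_{\sigma(p)=\tau}U_p$. A label with two or more primes therefore gives an explicit finite set of candidate values for $r$. These sets are intersected, through the relations $r$, $r+1$, $2r+1$, with the infinite sets $U_p$ coming from the singleton labels. Membership $n\in U_p$ is a cheap divisibility test. So for each assignment we enumerate all $r$ permitted by the multi-prime labels and test them against the remaining conditions. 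For instance, $2r+1$ is odd, so $2\in$ label $L-K$ forces $2r+1\mid 3$ or $5$, and hence $r\le2$. The expected conclusion is that no $r\ge51$ survives, which gives $r\le50$.

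I expect this exhaustive case analysis to be the main obstacle. Listing $U_p\cap U_q$ for all fifteen prime pairs by hand through their divisors of $p^2\pm1$ and $q^2\pm1$, and then combining the lists across the labels, is a finite but tedious job, best done by computer in the same way as Lemma \ref{prop_7'}. The cases I would check first are those where the largest prime pairs share a label and the other primes are spread out, because they produce the largest surviving candidates. The extremal survivors, which the Main Theorem suggests are near $r=23,24$, are not excluded at this stage, so the final bound $r\le50$ has to come out of the enumeration.
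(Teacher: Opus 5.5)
There is a gap: your ingredients are the right ones (Lemma~\ref{prop_7'}, pairwise coprimality of $K$, $L$, $L-K$, Lemma~\ref{lem_12}, and finiteness of $U_p\cap U_q$ by Lemma~\ref{lem_17}), but the argument as written never actually reaches $r\le 50$. The decisive step is deferred to a $3^6$-case enumeration, and you only state its outcome as ``expected''.

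The missing observation is that you should pigeonhole over the four primes $2,3,5,7$ only, not over all six. Four primes and three labels already force two of them, $p\neq q$, onto the same label. So some $n\in\{r,r+1,2r+1\}$ lies in $U_p\cap U_q$ with $p,q\in\{2,3,5,7\}$. Computing as in Remark~\ref{rem_13}, the maxima for the pairs $(2,3),(2,5),(2,7),(3,5),(3,7),(5,7)$ are $24,40,48,30,24,50$. Hence $r\le n\le 50$, which is the paper's entire proof.

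Your diagnosis of why plain pigeonhole fails is also wrong. Since $11\nmid 13^2\pm1$ and $13\nmid 11^2\pm1$, any $n\in U_{11}\cap U_{13}$ has $\nu_{11}(n)=\nu_{13}(n)=0$. Such an $n$ must divide one of $\GCD(120,168)=24$, $\GCD(120,170)=10$, $\GCD(122,168)=\GCD(122,170)=2$, so $\max(U_{11}\cap U_{13})=24$.

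The large intersections are the mixed ones: $120\in U_5\cap U_{11}$, $130\in U_5\cap U_{13}$ and $168\in U_7\cap U_{13}$. That is why adding $11$ and $13$ to the pigeonhole weakens the bound (to $168$) instead of strengthening it.

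Your enumeration would still end with $r\le 50$, but only because every assignment, restricted to $\{2,3,5,7\}$, already has a repeated label. So it is entirely subsumed by the four-prime argument. Larger primes such as $13$ only matter in the finer sieve of Theorem~\ref{prop_7}.
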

\begin{proof}
	Let the vertex degrees of the graph be $\beta_0^A=mK+1$ and $\beta_0^B=mL+1$, with $\GCD(K,L)=1$ and assume $r\geq 7$. By Lemma \ref{prop_7'}, each $p\in\{2,3,5,7\}$ divides $KL(L-K)$. By the pigeonhole principle, at least two such primes satisfy the same divisibility hypothesis in Lemma \ref{lem_12}. This means that there exist $n\in\{r,r+1,2r+1\}$ and $p,q\in \{2,3,5,7\}$ such that $n\in U_p\cap U_q$.  Each of the sets $U_p\cap U_q$ with $p,q\in \{2,3,5,7\}$ is finite by Lemma \ref{lem_17}. A direct computation, as in Remark \ref{rem_13}, gives 
	\[ \begin{array}{ccc}
		\max (U_2\cap U_3) = 24,& \max (U_2\cap U_5) = 40,& \max (U_2\cap U_7) = 48, \\
		\max (U_3\cap U_5) = 30,& \max (U_3\cap U_7) = 24,& \max (U_5\cap U_7) = 50.
	\end{array}    \]
	Therefore, in any case, we obtain $r\leq n\leq  50$.
\end{proof}

In our main theorem we further restrict the possibilities for the diameter of $\Gamma$.

\begin{theorem}\label{prop_7}
 There do not exist unbalanced distance-biregular graphs with diameter $2r+2$ and girth $4r+2$ for $13\le r\le 22$ and $r\ge 25$.
\end{theorem}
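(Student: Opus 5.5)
By Proposition \ref{prop_7''} we may assume $13\le r\le 50$, and by Lemma \ref{prop_7'} (since $r\ge 7$) the primes $2,3,5,7,11,13$ all divide $KL(L-K)$. The plan is to sharpen the pigeonhole argument of Proposition \ref{prop_7''} using all six primes and all pairs among them. For each $r$ in the range, we record which of the three conditions of Lemma \ref{lem_12} every prime $p\in\{2,3,5,7,11,13\}$ could satisfy. Concretely, we tabulate
\[
\mathcal P(r)=\bigl\{(p,n): p\in\{2,3,5,7,11,13\},\ n\in\{r,r+1,2r+1\},\ n\in U_p\bigr\}.
\]
Each test $n\in U_p$ is a finite check: strip the $p$-part of $n$ and ask whether the remaining cofactor divides $p^2-1$ or $p^2+1$.

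Since $K$, $L$ and $L-K$ are pairwise coprime, each prime divides exactly one of them. So each of the six primes must be assigned to one of the three slots $L$, $K$, $L-K$, with a prime $p$ assigned to slot $n$ only if $n\in U_p$. Here $n=r$ for $L$, $n=r+1$ for $K$, and $n=2r+1$ for $L-K$. If some $r$ admits no such assignment, that $r$ is excluded. In practice, for most $r$ in $[13,50]$ at least one of $r$, $r+1$, $2r+1$ is not in $U_p$ for any of the six primes. Then the six primes must be distributed among at most two slots, and the finite intersections $U_p\cap U_q$, computed as in Remark \ref{rem_13}, rule this out. For the remaining $r$ we will simply list the admissible assignments.

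For values of $r$ that survive the assignment test (the expectation being that $23,24$ and a few below $13$ survive, but also possibly some $r\ge 25$ such as $r=48$ coming from $U_2\cap U_7$ or $r=50$ from $U_5\cap U_7$), we add a further constraint. Any odd prime $p\le 13$ that is forced into a given slot fixes $\widehat K$, $\widehat L$ or $\widehat L-\widehat K$ modulo $p$ to $0$. We can then use a finer version of the Lemma \ref{lem_13} computation, now for primes \emph{dividing} $KL(L-K)$. In that case $\widehat Q(X)$ has the explicit form given in the proof of Lemma \ref{lem_12}, e.g. $-\widehat K X(X^{r/p^\nu}-1)^{2p^\nu}$. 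We need that the multiplicities of the roots of unity appearing there are compatible with $\widehat q_i$ being palindromic of degree $2$ or $4$ (Lemma \ref{prop_6}). Since the reductions $\widehat q_i$ each have degree at most $4$, a root of unity of multiplicity exceeding the number of factors $n=r$ is impossible. Alternatively, we would bring in the next primes $17,19,23$: for a prime $p\nmid KL(L-K)$, we check by the same MAGMA search as in Lemma \ref{prop_7'} whether $\widehat Q$ divides $\widehat h_1\widehat h_2\widehat h_3\widehat h_4$ for the specific surviving $r$. If it never does, $p$ must also divide $KL(L-K)$, and the assignment problem with seven or more primes becomes infeasible.

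The main obstacle is the finite case analysis for the few large surviving values, notably $r\in\{48,50\}$ and values near $24$, where pairs like $(2,7)$ or $(5,7)$ give large common elements of $U_p\cap U_q$. There I would first try adding the prime $17$ (and, if needed, $19$), using the MAGMA check of Lemma \ref{prop_7'} restricted to that single $r$. This is a cheap computation, since only $(p-1)(p-2)$ residue pairs $(\widehat K,\widehat L)$ need testing. Even if these checks do not eliminate everything, the surviving values together with Proposition \ref{prop_7''} should give exactly the statement: nonexistence for $13\le r\le 22$ and $r\ge 25$.
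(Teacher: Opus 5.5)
This is essentially the paper's proof: after reducing to $13\le r\le 50$ by Proposition~\ref{prop_7''}, you only need that every prime forced by Lemma~\ref{prop_7'} to divide $KL(L-K)$ satisfies $\{r,r+1,2r+1\}\cap U_p\neq\varnothing$, and already $p\in\{2,3,7,13\}$ leaves only $r\in\{23,24\}$ in that range. Your ``assignment'' has no constraint linking different primes, so it is feasible exactly when this holds prime by prime, and the pigeonhole/intersection detour is superfluous.

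In particular $r=48$ and $r=50$ do not survive, since $48,49,97\notin U_3$ and $50,51,101\notin U_2$, so your fallback step is never needed. That is just as well, because its multiplicity argument is not valid. Nothing prevents several $\widehat q_i$, or a single $\widehat q_i$ with a repeated root, from vanishing at the same root of unity. So a root of multiplicity $2p^{\nu}$ in $\widehat Q$ yields no contradiction.
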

\begin{proof}
By Proposition \ref{prop_7''}, it is enough to check for $ r\leq 50$. By Lemma \ref{prop_7'} and Lemma \ref{lem_12}, for any $p\in\{2,3,7,13\}$ and $r\geq 7$ we have $\{r,r+1,2r+1\}\cap U_p\neq \varnothing$. By a direct computation we find that this condition is satisfied only if $r\leq 12$, $r=23$ or $r=24$. We remark that checking this condition for other values of $p$ won't restrict further the possible values of $r$. This is because 24 always divides $p^2-1$ and 10 divides either $p^2-1$ or $p^2+1$. Therefore, for any remaining $r$ and any prime $p\geq 7$, the set $\{r,r+1\}\cap U_p$ is always non-empty.
\end{proof}

\section{Feasible parameters}\label{sec_7}

The aim of this section is to determine the triples $(r,k,l)$ for which there could exist an unbalanced distance-biregular graph with diameter $d=2r+2$, girth deficiency two and degrees $k+1$ and $l+1$.  We will use some of the results from the previous sections, which we collect in the following theorem.		

\begin{theorem}\label{th_7}
		Let $\Gamma=(V,E)$ be an unbalanced distance-biregular graph with diameter $d=2r+2$, girth deficiency two and vertex degrees $k+1$ and $l+1$. Then, the parameters $r$, $k$ and $l$ satisfy the following feasibility conditions:
		\begin{enumerate}
			\item $1\le r\le 12$ or $r\in\{23, 24\}$; \label{item_1}
			\item $k<l$; \label{item_2}
			\item $l^r(l+1)\equiv 0\,\mod\, (k+1)$; \label{item_3}
			\item the irreducible factors of $F_r(X)$ have degree at most two; \label{item_4}
			\item for each zero $t$ of $F_{r}(X)$, $m_t=|E|\frac{(t^2-4kl)}{(t-kl-1)((2r+1)(t+k+l)+l-k)}$ is a positive integer. \label{item_5}
		\end{enumerate}
\end{theorem}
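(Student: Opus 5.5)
This theorem only collects results already proved, so the plan is to verify each of the five items by pointing to the earlier result that yields it. Throughout, $\Gamma$ is an unbalanced distance-biregular graph with diameter $d=2r+2$, girth deficiency two, and degrees $k+1$ and $l+1$.

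For item \ref{item_1}, we have $r\ge 1$ by the setup after Proposition \ref{lem_2}. Since $d=2r+2$ and the girth is $2d-2=4r+2$, Theorem \ref{prop_7} excludes $13\le r\le 22$ and $r\ge 25$. What remains is exactly $1\le r\le 12$ or $r\in\{23,24\}$. Items \ref{item_2} and \ref{item_3} are precisely the two feasibility conditions of Corollary \ref{cor_2}. Item \ref{item_4} is Corollary \ref{cor_4}, which was deduced from the multiplicity formula \eqref{eq_21}.

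For item \ref{item_5}, let $t$ be a zero of $F_r(X)$. By Theorem \ref{th_8}, the associated $U_t$ is a 2-dimensional irreducible representation of $\cA_d(k,l)$. It therefore occurs in the decomposition of the standard module $\bC^E$, since the standard module of a semisimple adjacency algebra contains every irreducible representation. Hence its multiplicity $m_t$ is a positive integer, and Theorem \ref{th_5} gives the explicit value. The one point that needs comment is positivity, that is, that $m_t$ counts an actual occurrence and is not merely a formal expression. This follows from the orthogonality relation \eqref{eq_10}: its left-hand side is a sum of nonnegative terms including $\zeta(I)^2>0$, so $m_t$ is a finite positive number. Integrality holds because $m_t$ is a multiplicity.

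There is no real obstacle. The only care needed is the standard fact that every irreducible representation of the adjacency algebra of an association scheme appears in the standard module. This holds because $\cA$ acts faithfully on $\bC^E$ and is semisimple, so every simple $\cA$-module is a constituent of a faithful module.
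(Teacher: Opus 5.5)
Your proposal is correct and is essentially the paper's own proof: the paper cites Theorem~\ref{prop_7} for item 1, Corollary~\ref{cor_2} for items 2 and 3, Corollary~\ref{cor_4} for item 4, and Theorem~\ref{th_5} for item 5. Your extra remark on positivity is fine, but the actual reason $m_t\ge 1$ is the faithfulness-plus-semisimplicity argument in your last paragraph, since the orthogonality relation \eqref{eq_10} already presupposes that $U_t$ occurs in $\bC^E$ and so cannot by itself establish positivity.
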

\begin{proof}
	 Condition \ref{item_1} is in Theorem \ref{prop_7}, Conditions \ref{item_2} and \ref{item_3} are in Corollary \ref{cor_2}, Condition \ref{item_4} is in Corollary \ref{cor_4} and Condition \ref{item_5} is in Theorem \ref{th_5}.
\end{proof}
	
A triple $(r,k,l)$ that satisfies the conditions of Theorem \ref{th_7} is said to be {\em feasible}.

We remark that Conditions \ref{item_1} and \ref{item_4} are redundant since they follow from the others. We included them anyway since in practical applications they are easier to check than Condition \ref{item_5}.

Note that Condition \ref{item_5} is actually a collection of $r$ conditions, one for each zero of $F_{r}(X)$. Intuitively, this means that  the larger the diameter of an unbalanced distance-biregular graph with girth deficiency two, the more restrictions we have on its parameters $k$ and $l$. When the diameter is sufficiently high, there are so many restrictions that such a graph cannot exist.  This is the intuitive explanation behind Condition \ref{item_1}. It also suggests that, for any admissible value of $r$ it is possible to obtain more precise conditions on $k$ and $l$. This is the goal of the rest of this section.

\subsection{Unbalanced distance-biregular graphs  with diameter 4 and girth 6}

{The case of $r=1$ corresponds to unbalanced distance-biregular graphs of diameter four and girth six, according to the previous notation. In this case, the algebra $\cA_4(k,l)$ has, up to isomorphism, a unique two-dimensional representation corresponding to the  root  $t=-k$ of $F_1(X)=X+k$. Its multiplicity, according to Theorem \ref{th_5}, is $m_t=k(l+1)$. Therefore, the only meaningful conditions of Theorem \ref{th_7} are Conditions \ref{item_2} and \ref{item_3}. In any case, unbalanced distance-biregular graphs with diameter four and girth six arise precisely as incidence graphs of Steiner systems of type $S(2,k,v)$. There is extensive literature on them; see \cite{han,kir} and \cite[Appendix A]{lato2}, to name a few.}

\subsection{Unbalanced  distance-biregular graphs  with diameter 6 and girth 10}

The case of $r=2$ corresponds to unbalanced distance-biregular graphs of diameter six and girth ten. 

\begin{proposition}\label{prop_8}
	Let $\Gamma$ be an unbalanced distance-biregular graph with diameter six, girth ten and vertex degrees $k+1$ and $l+1$. Then $k^2+4kl$ is a perfect square. 
\end{proposition}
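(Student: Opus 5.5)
For $r=2$ we have $F_2(X)=XF_1(X)-klF_0(X)=X^2+kX-kl$. I will exploit the multiplicity of the two-dimensional representations given by Theorem \ref{th_5}.

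The roots of $F_2$ are $t_\pm=\frac{-k\pm\sqrt{k^2+4kl}}{2}$. These are real and distinct, as Proposition \ref{prop_2} predicts. The claim is that $k^2+4kl$ is a perfect square, which is equivalent to $t_\pm$ being rational, hence (being roots of a monic integer polynomial) integers. So I will argue by contradiction. Suppose $k^2+4kl$ is not a square. Then $F_2(X)$ is irreducible over $\bQ$, and $t_+$ and $t_-$ are Galois conjugates in $\bQ(\sqrt{k^2+4kl})$.

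Each multiplicity
\[
m_{t}=|E|\frac{t^2-4kl}{(t-kl-1)(5(t+k+l)+l-k)}
\]
is a rational function of $t$ with rational coefficients, and it must be a positive integer. Hence $m_{t_+}$ and $m_{t_-}$ are conjugate under $\sqrt{k^2+4kl}\mapsto-\sqrt{k^2+4kl}$. Since both are rational, they must be equal. So the rational function $R(t)=\frac{t^2-4kl}{(t-kl-1)(5t+4k+6l)}$ takes the same value at the two roots of $X^2+kX-kl$.

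I then reduce $R$ modulo $F_2$. Using $t^2=-kt+kl$, the numerator becomes $-kt-3kl$. The denominator becomes
\[
5t^2+(4k+6l-5kl-5)t-(kl+1)(4k+6l),
\]
which reduces to a linear expression $\alpha t+\beta$ with $\alpha=-k+6l-5kl-5$ and $\beta=5kl-(kl+1)(4k+6l)$. The condition $R(t_+)=R(t_-)$, with $t_+\ne t_-$, means that the Möbius map $t\mapsto\frac{-kt-3kl}{\alpha t+\beta}$ takes equal values at two distinct points. That forces it to be constant, i.e. $-k\beta+3kl\alpha=0$, or $\beta=3l\alpha$. Denominators do not vanish at the roots, by Corollary \ref{cor_5} and the bound $|t|<2\sqrt{kl}$.

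Substituting, the condition becomes
\[
5kl-(kl+1)(4k+6l)=3l(6l-k-5kl-5).
\]
After expansion and rearrangement this should be an equation in $k,l$ with no solutions satisfying $1\le k<l$. I expect a sign argument to suffice: the $-4k^2l$ term on the left against the $-15kl^2$ and $18l^2$ terms on the right should make it impossible using $l>k\ge1$. This is where care is needed. If it turns out not to be sign-definite, I would instead use Corollary \ref{cor_2} ($l^2(l+1)\equiv0 \bmod k+1$) or an integrality argument to exclude the remaining cases.

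The main obstacle is this final algebraic verification, namely checking that the Möbius map is nonconstant for all admissible $k<l$. Everything before it is routine Galois conjugation. The contradiction then shows $t_\pm\in\bQ$, so $k^2+4kl$ is a perfect square.
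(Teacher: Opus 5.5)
Everything you computed is correct, and the approach is essentially the paper's. The paper also assumes $t_\pm$ irrational. It then rewrites $m_{t_\pm}$ modulo $F_2$ as $a+bt_\pm$ (using MAGMA, and keeping the factor $|E|$) and observes that this can be rational only if $b=0$. Your Galois-conjugation and M\"obius-determinant formulation is an equivalent way of extracting that coefficient. I checked the following:
\begin{itemize}
\item the numerator reduces to $-kt-3kl$;
\item your $\alpha$ and $\beta$ are right;
\item the denominators do not vanish at $t_\pm$ (the zero of $5t+4k+6l$ is $-(k+l)-(l-k)/5$, whose absolute value exceeds $k+l\ge 2\sqrt{kl}$ because $l>k$);
\item the map is constant exactly when $\beta=3l\alpha$.
\end{itemize}

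The gap is that you stop at the decisive step, and the sign argument you anticipate cannot work. The expression
\[
\beta-3l\alpha=-4k^2l+9kl^2+8kl-4k+9l-18l^2
\]
does not have constant sign on $1\le k<l$: it equals $-14$ at $(k,l)=(1,2)$ and $19$ at $(k,l)=(2,3)$. Your fallback is not needed either. What closes the proof is the factorization
\[
\beta-3l\alpha=(9l-4k)(kl-2l+1).
\]
Here $9l-4k>0$, and $kl-2l+1=0$ means $(2-k)l=1$, i.e.\ $k=l=1$, contradicting $k<l$. Hence the M\"obius map is nonconstant, $m_{t_+}\neq m_{t_-}$, and the contradiction follows. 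For comparison, the paper's vanishing coefficient is $k(-kl^2-kl+2l^2+l-1)=-k(l+1)(kl-2l+1)$, which has the same essential factor $kl-2l+1$. Once the factorization is supplied, your version can be checked by hand without computer algebra.
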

\begin{proof}
	In this case we have $r=2$. By Theorem \ref{th_5}, taking into account Proposition \ref{cor_1}, Part 3, for each zero $t$  of $F_2(X)$, the multiplicity $m_t$ associated with $t$ has the form
	\begin{equation}\label{eq_32}
		m_t=\frac{(k^3l^3+k^3l^2-k-1)(l+1)}{kl-1}\frac{(t^2-4kl)}{(t-kl-1)(5(t+k+l)+l-k)}.
	\end{equation}
	
	The roots of $F_2(X)=X^2+kX-kl$ are $t_{\pm}=\frac{-k\pm\sqrt{k^2+4kl}}{2}$. Assume that $k^2+4kl$ is not a perfect square, that is $t_{\pm}$ are both irrational numbers. By plugging  {the explicit values of $t_{\pm}$} in \eqref{eq_32} and handing it to the computer algebra system MAGMA \cite{magma}, we get
	\[
	m_{t_{\pm}}=k\frac{(-kl^2-kl+2l^2+l-1)t_{\pm}+2kl^3+3kl^2+kl+2l^2+2l}{k+4l}.
	\]
	It is evident that $m_{t_{\pm}}$ is a rational number if and only if $k(-kl^2-kl+2l^2+l-1)=0$. This yields, either $k=0$  or $l(-kl-k+2l+1)=1$. Since we assume that $l>k\ge 1$, none of these cases occurs. 
\end{proof}

Using MAGMA \cite{magma}, we first searched for pairs $(k,l)$, with $1\le k<l<10^5$, that satisfy the feasibility condition given by Proposition \ref{prop_8}, then Conditions 2 and 3 provided by Theorem \ref{th_7}. For each pair that passes the previous test and for  all zeros of $F_2(X)$, we verified if Condition 5 of Theorem \ref{th_7} is also satisfied.
Only 554 pairs passed the complete test. In particular, for
$1\le k<l<100$ we found the 17 pairs
$(1,2)$, $(1,6)$, $(1,56)$, $(3,6)$, $(4,15)$, $(4,24)$, $(4,35)$,
$(4,80)$, $(4,99)$, $(5,30)$, $(7,84)$, $(8,30)$, $(9,54)$,
$(11,66)$, $(14,84)$, $(24,90)$ and $(27,84)$.

The pairs $(1,2)$, $(1,6)$ and $(1,56)$ correspond to the case in
which one bipartition class has valency two; distance-biregular graphs
with 2-valent vertices were classified by Mohar and Shawe-Taylor in
\cite{mst}. Delorme \cite{del} gives an example of a distance-biregular graph with diameter $6$ and girth $10$ and $(k,l)=(1,2)$, namely the subdivision graph of the Petersen graph. The pair $(1,6)$ corresponds to the subdivision graph of the Hoffman-Singleton graph, and the pair $(1,56)$ would be given by the subdivision graph of the missing Moore graph if such a graph exists.

More generally, the computational results above raise the question of which of the feasible triples $(2,k,l)$ can actually be realized by unbalanced distance-biregular graphs. This leads to the following problem:
	
\begin{problem*}\label{conj_2}
For each of the above feasible triples $(2,k,l)$, determine whether there exists an unbalanced distance-biregular graph with diameter $6$, girth $10$ and degrees $k+1$ and $l+1$, and construct one whenever it exists.
\end{problem*}

\subsection{Unbalanced distance-biregular graphs with girth deficiency two and diameter at least 8}

The case of $r\geq 3$ corresponds to unbalanced distance-biregular graphs with girth deficiency two and diameter at least 8. In this case, we used the computer to search for feasible triples $(r,k,l)$.  A naive approach would be to examine all possible pairs $(k,l)$ with $1\le k<l$ and $l$ below a fixed upper limit, and then verify the conditions of Theorem \ref{th_7}. 
This computation requires many polynomial factorizations, so it is too time-consuming and would only be applicable for a relatively small upper limit. We therefore adopted a different approach to reduce the calculation time.

The main idea behind our algorithm is the following: for each prime  $p$, Lemma \ref{lem_14} provides certain conditions on $r$, $k$, and $l$. This gives us a first list of feasible triples $(r,k,l)$ to which  Theorem \ref{th_7} will be applied. Using the Chinese Remainder Theorem, we can combine the conditions relating to different prime numbers to produce a shorter list.

Our algorithm takes as input three positive integers $r$, $\BOUND$ and  $P_r$. The output consists of all pairs $(k,l)$ such that $(r,k,l)$ is
feasible and $l/\GCD(k,l)<\BOUND$.  The parameter $P_r$ represents the upper bound for the prime numbers for which we consider the condition given by Lemma \ref{lem_14}.
Moreover, for each $a\in \bZ$, we consider the polynomial 
\[
Q_a(X):=(a+1)(X^{2r+2}+1)-aX(X^{2r}+1)- 2X^{r+1}.
\]

The algorithm consists of the following steps: \begin{enumerate}
	\item \label{step_1} Take as input three positive integers: $r\ge 3$, $\BOUND$, $P_r$.
	\item \label{step_2} Select all primes $p\le P_r$ such that $2r+1\not\in\bigcup_{i=1}^{s}{U_{p_i}}$, and denote
them by $p_1,\ldots,p_s$.
	\item \label{step_3}  For each $p_i$, $i=1,\ldots,s$, initialize
$T_r^{(i)}=\varnothing$. For every $a\in\bZ_{p_i}$, consider the
polynomial $\widehat Q_a(X)=(a+1)(X^{2r+2}+1)-aX(X^{2r}+1)-2X^{r+1}
\in\bZ_{p_i}[X]$. Check whether every non-zero root of $\widehat Q_a(X)$ is also a zero of $(X^{p_i^2-1}-1)(X^{p_i^2+1}-1)$. 
If this condition is satisfied, add $a$ to $T_r^{(i)}$. Finally, write $T_r^{(i)} =\{a_1^{(i)},\ldots,a_{j_{p_i}}^{(i)}\}$.
	\item \label{step_4} Set $n=p_1\cdots p_s$. For each $s$-tuple $\sigma=(a_1,\ldots, a_s)\in T_r^{(1)}\times\cdots\times T_r^{(s)}$, use the Chinese Remainder Theorem to find the unique integer $\alpha_\sigma$ with $0\le\alpha_\sigma<n$  such that $\alpha_\sigma\,\equiv a_i\,\mod\, p_i$, for $i=1,\ldots,s$. Collect these integers in the set  $\cT_r$.
\item \label{step_5}
	For each $\alpha\in \mathcal{T}_r$ and  $\delta\in\mathbb{Z}$ such that $1\leq \delta < \BOUND$ do the following: 
	\begin{enumerate}
		\item Compute \begin{equation}\label{eq_42}
			\begin{split}
				\beta_{\rm MIN} &= \frac{1 - \alpha\delta}{n} \\
				\beta_{\rm MAX} &= \frac{\BOUND - (\alpha+1) \delta }{n}.
			\end{split}
		\end{equation}
	\item For each positive integer $\beta$ with $\beta_{\rm MIN} \le \beta < \beta_{\rm MAX}$, compute $(K,L)$ using the formulas
			\begin{equation}\label{eq_35}
				K=\alpha\delta+\beta n, \qquad  L=(\alpha+1)\delta+\beta n.
			\end{equation}
		 If $\GCD(K,L)=1$ collect the pair $(K,L)$ into the set $\cS_r$.
		 \end{enumerate} 
	\item \label{step_6} For each $(K,L)\in \cS_r$ compute the irreducible  factorization of $Q(X)/(X-1)^2$ over the integers, where  $Q(X)$ is the polynomial as in \eqref{eq_44}:
	\[
Q(X)=LX^{2r+2}-KX^{2r+1}-2(L-K)X^{r+1}-KX+ L.	
	\] 
If at least one of the factors has degree other than 2 or 4, remove $(K,L)$ from  $\cS_r$.

	\item \label{step_7} For each $(K,L)\in \cS_r$ and each divisor $D>1$ of $L^r(L-K)$, check whether $K$ divides $D-1$. If so, set $m=\frac{D-1}{K}$, $(k,l)=(mK,mL)$ and collect $(k,l)$ in the set $\bar\cS_r$.
	\item \label{step_8} For each $(k,l)\in\bar\cS_r$ compute the polynomial $F_r(X)$ using Eq. \eqref{eq_7}. For each zero $t$ of $F_r(X)$, compute the multiplicity $m_t$ using  Theorem \ref{th_7}, Part 5. If at least one of this multiplicities is not an integer,  remove  $(k,l)$ from $\bar\cS_r$.
	\item \label{step_9} Return $\bar\cS_r$ as output.
\end{enumerate}

We now prove that the above algorithm produces the desired output. Fix three positive integers $r\ge3$, $\BOUND$ and $P_r$ and assume that $(r,k,l)$ is  a feasible triple with $l/\GCD(k,l)<\BOUND$. We claim that $(k,l)$ is in the set $\bar\cS_r$ given as output.

Set $(k,l)=(mK,mL)$, where $m=\GCD(k,l)$. Note that $\GCD(K,L)=1$ and, by Condition \ref{item_2} of Theorem \ref{th_7}, $K<L$. 

Let $p_1,p_2\ldots, p_s$ be the prime numbers found in Step \ref{step_2}. Since $2r+1\not\in U_{p_i}$, Lemma \ref{lem_12}, Part 3, implies that
$p_i\nmid(L-K)$ for $i=1,\ldots,s$. Set  $n=p_1\cdots p_s$ as in Step \ref{step_4}. Since $\GCD(n, L-K)=1$, then there is an integer $\alpha$ such that $\alpha(L-K) \equiv K\ \mod\ n$. For each $i=1,\dots,s$, making computations modulo $p_i$, we have $\widehat \alpha=\widehat K/(\widehat L-\widehat K)\in \bZ_{p_i}$, $\widehat \alpha+1= \widehat L/(\widehat L-\widehat K)\in \bZ_{p_i}$ and $ \widehat Q(X)/(\widehat L-\widehat K) = \widehat Q_\alpha(X)\in \bZ_{p_i}[X]$. By Lemma \ref{lem_14}, every non-zero root of $\widehat Q_\alpha(X)\in\bZ_{p_i}[X]$ is also a zero of $(X^{p_i^2-1}-1)(X^{p_i^2+1}-1)$. Therefore, by Step \ref{step_3}, $\alpha \equiv a_i$ for some $a_i\in T_r^{(i)}$. Since this holds for every $i=1,\dots,s$, by Step \ref{step_4} we have that $\alpha\in \cT_r$.

By the construction of $\alpha$, we can write $K=\alpha(L-K)+\beta n$ for some $\beta\in \bZ$. Then we see that  Eq. \eqref{eq_35} holds with $\delta=L-K$. Moreover, by taking into account $1 \le K< L< \BOUND$ we get $1\leq \delta < L < \BOUND$ and  $\beta_{\rm MIN} \le \beta < \beta_{\rm MAX}$. Therefore, since  $\GCD(K,L)=1$, we see that $(K,L)$ belongs to the set $\cS_r$ obtained in Step \ref{step_5}.

By Lemma \ref{prop_6},  each irreducible factor of $Q(X)/(X-1)^2$ has degree 2 or 4; hence, in Step \ref{step_6} the pair $(K,L)$ is not removed from $\cS_r$.

By the proof of Corollary \ref{cor_3}, $D=k+1$ divides $L^r(L-K)$, and, clearly, $K$ divides $mK=k=D-1$. Recall that $(k,l)=(mK,mL)$, so $m=\frac{D-1}{K}$. Thus, $(k,l)$ is collected into  the set $\bar \cS_r$ in Step \ref{step_7}.

Finally, as $(r,k,l)$ satisfies Conditions \ref{item_4} and \ref{item_5} of Theorem \ref{th_7}, the pair $(k,l)$ is not discarded in Step \ref{step_8}, so it is in the set $\bar\cS_r$ given as output.

\begin{remark}
	{\em We note that the parameter $P_r$ affects the speed of computation only, not the results obtained. Indeed, in the above algorithm, the set of primes $p_1,\ldots,p_s\le P_r$ is used to construct the set $\cT_r$ in Step \ref{step_4}, which in turn is used to construct the set $\cS_r$ in Step \ref{step_5}. It is desirable that the size of $\cS_r$ is as low as possible to improve the execution speed of the subsequent steps. Increasing the number of primes considered generally increases
$|\cT_r|$. This may slow the computation, because Step \ref{step_5}
loops through all elements of $\cT_r$ to construct $\cS_r$. On the
other hand, the number $n=p_1\cdots p_s$ appears in the denominator in
Eq. \eqref{eq_42}. Consequently, using more primes shortens the
interval $[\beta_{\rm MIN},\beta_{\rm MAX}]$ and helps reduce the
cardinality of $\cS_r$. For this reason the value of $P_r$ was selected on
a case-by-case basis to balance these two effects and to have a reasonable execution speed.}
\end{remark}

\begin{remark}{\em Note that, if $r\leq 2$, Step \ref{step_2} of the above algorithm cannot be performed. Moreover, if $r$ is not one of the feasible values given in Condition \ref{item_1} of Theorem \ref{th_7}, then the output is guaranteed to be the empty set.
}
\end{remark}

Table \ref{results} shows the values of $P_r$ and $\BOUND$  that we were able to test on our computer, for every feasible $r\ge 3$ as given in Theorem \ref{prop_7}. The fourth row of the table shows the number $s_r^{(5)}$ of elements in $\cS_r$ at the end of Step \ref{step_5}. This is the number of candidates for the pair $(K,L)$: such pairs exist only for $r=3,4,5,7$.  Note that, even in these four cases, $s_r^{(5)}$ is significantly less than the number of all possible pairs $(K,L)$ with $L\leq B_r$. The fifth row shows the number $s_r^{(6)}$ of elements in $\cS_r$ at the end of Step \ref{step_6}, i.e., the number of pairs $(K,L)$ such that the polynomial $Q(X)$ has the desired irreducible factorization. When $r= 5$ or $7$, we did not find any such pair. Similarly, the last two rows of the table show the numbers $s_r^{(7)}$ and $s_r^{(8)}$ of the elements in $\bar \cS_r$ at the end of Steps \ref{step_7} and \ref{step_8}, respectively.  Even for $r=3$ or $4$, we did not find any feasible triple $(r,k,l)$, since all pairs found in Step \ref{step_7} produce non-integer multiplicities in Step \ref{step_8}. 
		{\renewcommand{\arraystretch}{1.7}
\begin{table}[htbp]
\centering
\caption{Computational results}
\label{results}
\begin{tabular}{c|*{12}{c}}
$r$ & 3 & 4 & 5 & 6 & 7 & 8 & 9 & 10 & 11 & 12 & 23 & 24 \\
\hline
$P_r$ & 23 & 23 & 37 & 47 & 47 & 47 & 47 & 47 & 47 & 47 & 59 & 59 \\
\hline
$\BOUND$ & $10^4$ & $10^4$ & $10^4$ & $10^5$ & $10^6$ & $10^6$ &
$10^7$ & $10^7$ & $10^8$ & $10^8$ & $10^9$ & $10^9$ \\
\hline
$s_r^{(5)}$ & 1632706 & 164557 & 45 & 0 & 150 & 0 & 0 & 0 & 0 & 0 & 0 & 0 \\
\hline
$s_r^{(6)}$ & 220 & 5 & 0 & 0 & 0 & 0 & 0 & 0 & 0 & 0 & 0 & 0 \\
\hline
$s_r^{(7)}$ & 1951 & 27 & 0 & 0 & 0 & 0 & 0 & 0 & 0 & 0 & 0 & 0 \\
\hline
$s_r^{(8)}$ & 0 & 0 & 0 & 0 & 0 & 0 & 0 & 0 & 0 & 0 & 0 & 0
\end{tabular}
\end{table}
}	

The above computational results suggest the following conjecture:
\begin{conjecture*}
There is no unbalanced distance-biregular graph with diameter greater than or equal to eight and girth deficiency two. 
\end{conjecture*}
\begin{remark}
	{\em Our programs, written in MAGMA \cite{magma}, were run on a Windows
system with an Intel Core i7 processor. The code is available from the
first author upon request by email.}
\end{remark}
 \section*{Acknowledgments}
\noindent The authors would like to thank the referees for their  time and
useful comments that improved the first version of the paper.

\end{document}